\title[Realisation of bending measured laminations]{Realisation of bending measured laminations by Kleinian surface groups}
\author{Shinpei Baba and Ken'ichi Ohshika}
\address{S.B.: Department of Mathematics, Graduate School of Science, Osaka University, Toyonaka, Osaka, Japan}
\email{baba@math.sci.osaka-u.ac.jp}
\address{K.O.: Department of Mathematics, Faculty of Science, Gakushuin University, Toshima-ku, Tokyo 171-8588, Japan}
\email{ohshika@math.gakushuin.ac.jp}
\newcommand{\complexes}{\mathbb{C}}
\newcommand{\integers}{\mathbb{Z}}
\newcommand{\AH}{\mathsf{AH}}
\newcommand{\QF}{\mathsf{QF}}
\newcommand{\QH}{\mathsf{QH}}
\newcommand{\hyperbolic}{\mathbb{H}}
\newcommand{\Fr}{\operatorname{Fr}}
\newcommand{\len}{\mathrm{length}}
\newcommand{\cc}{\mathcal{CC}}
\newcommand{\ml}{\mathcal{ML}}
\newcommand{\qsp}{\QH_{\lambda_-, \lambda_+}}
 \newcommand{\cml}{\widehat{\mathrm{ML}}}
 \newcommand{\pl}{\mathcal{PML}}
 \newcommand{\plc}{\mathcal{PML}_c}
 \newcommand{\ct}{\widehat{\mathrm{T}}}
\newcommand{\cb}{\widehat{b\circ q}}
\newcommand{\ce}{\mathcal{E}}
\newcommand{\cE}{\widehat{\mathcal E}}
\newcommand{\cF}{\widehat{F}}
\newcommand{\cU}{\widehat{U}}
\newcommand{\ch}{\widehat{H}}
\newcommand{\teich}{\mathcal{T}}
\let \pslc \PSL
\newtheorem{theorem}{Theorem}[section]
\newtheorem{proposition}[theorem]{Proposition}
\newtheorem{lemma}[theorem]{Lemma}
\newtheorem{corollary}[theorem]{Corollary}
\newtheorem{claim}[theorem]{Claim}
\theoremstyle{definition}
\newtheorem{definition}[theorem]{Definition}
\theoremstyle{remark}
\newtheorem{remark}[theorem]{Remark}
\begin{document}
\maketitle
\selectlanguage{english}
\begin{abstract}
For geometrically finite Kleinian surface groups, Bonahon and Otal proved the existence part, and partly the uniqueness part of the bending lamination conjecture. 
In this paper, we generalise the existence part to general Kleinian surface groups including geometrically infinite ones. 
Along the way, we also prove the compactness of the set of Kleinian surface groups realising an arbitrarily fixed data of bending laminations and ending laminations. 
Our proof is independent of that of Bonahon and Otal. 
\end{abstract}

\section{Introduction}
The simultaneous uniformisation theorem by Bers \cite{Be} gives a parametrisation of the quasi-Fuchsian space for a closed oriented surface $S$ by the product of two Teichm\"{u}ller spaces $\teich(S) \times \teich(\bar S)$, where $\bar S$ denotes $S$ with its orientation reversed.
This was generalised by the work of Kra, Maskit, Marden and Sullivan, which shows  that for any Kleinian surface group, or more generally for any freely indecomposable Kleinian group $G$, its quasi-conformal deformation space is parametrised by $\teich(\Omega_G/G)$, where $\Omega_G$ is the region of discontinuity of $G$ in the Riemann sphere.

In his lecture notes \cite{ThL}, Thurston considered the convex core of the quotient hyperbolic 3-manifold $\hyperbolic^3/G$ for a Kleinian (surface) group $G$.
He noticed that the boundary of the convex core has two pieces of information: the hyperbolic structure and the bending lamination.
In contrast to the previous work of Bers et al, these are obtained by just considering the quotient hyperbolic manifolds, without looking at structures at infinity.
He seems to have conjectured that both hyperbolic structures  and  bending laminations on the boundaries serve as other kinds of parametrisation of the quasi-Fuchsian space, or more generally, the quasi-conformal deformation space of a freely indecomposable Kleinian group.

As for the first of these two, the hyperbolic structures on the boundaries of convex cores, Sullivan's lemma (see e.g. \cite{EM}) shows that they are within universally bounded distance in the corresponding Teichm\"{u}ller spaces from the conformal structures at infinity on $\Omega_G/G$.
Nevertheless, it is still unknown if they really give a parametrisation of the deformation space.

As for bending laminations, Bonahon-Otal showed in \cite{BO} that every pair of measured laminations on a closed orientable surface $S$, without homotopic components and without compact leaves with weight larger than or equal to $\pi$, can be realised as bending laminations of a quasi-Fuchsian group corresponding to $S$.
In particular, when both of the measured laminations are weighted multi-curves, it was proved that the realising quasi-Fuchsian group is unique (up to conjugation).
They also showed the same result for quasi-conformal deformation spaces of general freely indecomposable geometrically finite groups.
Their result was generalised to freely decomposable Kleinian groups by Lecuire \cite{Lec}.

In this paper, we prove a generalisation of the existence part of this result by Bonahon-Otal to general  Kleinian surface groups including geometrically infinite ones (\cref{main}-(1)), by adding ending laminations  to the data.
We shall furthermore prove the compactness of the set of representations (up to conjugacy) realising  given bending laminations and ending laminations (\cref{main}-(2)).
The proofs of both are different from and  independent of the results by Bonahon-Otal.
On the other hand, we do not have the partial uniqueness result as was given by Bonahon-Otal, for we cannot invoke the theory of cone manifold deformation, whose generalisation to the case of geometrically infinite groups does not exist for the moment.

\sloppy
We shall prove the existence part and the compactness by showing the properness of the following composition of maps in \cref{proper degree 1}.
For a Kleinian surface group $G$, the theory of Bers-Kra-Maskit-Marden-Sullivan gives a parametrisation of the quasi-conformal deformation space $q \colon \teich(\Omega_G/G) \to \QH(G)$, where $\Omega_G$ denotes the region of discontinuity of $G$, $\teich(\Omega_G/G)$ the Teichmüller space of the Riemann surface $\Omega_G/G$, and $\QH(G)$  the space of quasi-conformal deformations of $G$ modulo conjugacy.
Sending each Kleinian group to its bending lamination, we get a map $b \colon \QH(G) \to \ml(\Omega_G/G)$, where $\ml$ denotes the space of measured laminations.
Let $D \subset \ml(\Omega_G/G)$ be the set of measured laminations evidently unrealisable, whose exact definition is given in  \cref{main}.
In this setting, \cref{proper degree 1} states that $q \circ b$ is a proper, degree-1 map to $\ml(\Omega_G/G) \setminus D$.
This in particular says that $q \circ b$ is surjective to $\ml(\Omega_G/G) \setminus D$, and hence we obtain the existence part of the main result.
The compactness part is derived from the properness of the map.

Our proof of \cref{proper degree 1}, is divided into two parts: we shall first show  the properness of the map $b \circ q$  in \cref{properness}, and then that $b\circ q$ has degree 1 in \cref{degree 1}.
In the first part, relying on the analysis of geometric limits, as was given in Ohshika-Soma \cite{OS} and Ohshika \cite{OhT}, we shall show that any sequence going to infinity in $\teich(\Omega_G/G)$  has image under $b \circ q$ which cannot stay in a compact set disjoint from $D$.
In the second part, we shall show that $q \circ b$ can be properly homotoped to a local degree-1 map which is constructed using the earthquake map.
Since the degree is invariant under a proper homotopy, this implies that $b \circ q$ has also degree 1.

The authors would like to express their hearty gratitude to the referee for his/her careful reading and suggestions, due to which we could in particular remove some mistakes in the first version.

\section{Preliminaries}
\subsection{Basics of Kleinian groups}
A {\em Kleinian group} is a discrete subgroup of $\pslc$.
In this paper, we only consider Kleinian groups isomorphic to the fundamental groups of closed orientable surfaces of genus greater than $1$, which we call {\em Kleinian surface groups}.
A Kleinian group acts on the Riemann sphere $\hat \complexes$ by linear fractional transformations and on the hyperbolic space $\hyperbolic^3$ by orientation-preserving isometries.
By considering the Poincar\'{e} model of $\hyperbolic^3$, the Riemann sphere $\hat \complexes$ is regarded as the sphere at infinity of $\hyperbolic^3$.
The action on $\hat \complexes$ is a continuous extension of the action on $\hyperbolic^3$ if we regard $\hat \complexes$ as the points at infinity in this way.
For a Kleinian group $G$, its {\em limit set} $\Lambda_G$ is the closure of the set of fixed points of non-trivial elements of $G$.
The complement of $\Lambda_G$ in $\hat \complexes$ is called the {\em region of discontinuity} of $G$, and is denoted by $\Omega_G$.

The smallest convex subset of $\hyperbolic^3$ containing all geodesics both of whose endpoints at infinity lie on $\Lambda_G$ is called the {\em Nielsen convex hull} and is denoted by $H_G$.
Since $H_G$ is a closed convex subset invariant under $G$, its quotient $H_G/G$ is a closed convex subset of $\hyperbolic^3/G$, which is a 3-submanifold except for the case when $G$ is Fuchsian.
The quotient $H_G/G$ is called the convex core of $\hyperbolic^3/G$ and is denoted by $C(\hyperbolic^3/G)$.
The Kleinian group $G$ is said to be {\em geometrically finite} if $C(\hyperbolic^3/G)$ has finite volume.

\subsection{Geodesic and measured laminations}
\label{mss}
In this section, we consider an orientable surface $S$ which may have punctures but has no boundary.
We fix a complete hyperbolic metric on $S$ which makes punctures cusps.
A {\em geodesic lamination} $\lambda$ on $S$ is a closed subset consisting of disjoint simple geodesics which do not tend to cusps.
A geodesic constituting $\lambda$ is called a {\em leaf}.
A geodesic lamination is said to be {\em minimal} when it does not have a non-empty proper sublamination.
Any geodesic lamination is decomposed into disjoint  finitely many minimal sublaminations, which we call {\em minimal components}, and isolated leaves spiralling around minimal components.
We say that a geodesic lamination is {\em arational} when every component of its complement is either simply connected or an annulus containing a cusp.

A {\em measured lamination} is a geodesic lamination equipped with a transverse invariant measure.
The support of a measured lamination is a geodesic lamination having the property that the entire lamination  coincides with the union of its minimal components.
We denote the support of a measured lamination $\lambda$ by $|\lambda|$.
Conversely, a geodesic lamination with this property always supports a transverse invariant measure.
We always assume that the support of a measured lamination is the entire lamination.

For two measured laminations $\lambda$ and $\mu$, their intersection number $\iota(\lambda, \mu)$ is defined to be the integral of the product of the transverse measures of $\lambda$ and $\mu$ over the surface $S$.
In particular, when $c$ is a simple closed curve, we regard $c$ as having the unit Dirac transverse measure and define $\iota(\lambda, c)$ as such.
For two geodesic laminations $\lambda$ and $\lambda'$ on a hyperbolic surface $S$, and a point $p \in \lambda \cap \lambda'$, we can consider the  angle formed by $\lambda$ and $\lambda'$ at $p$ taking the value in $[0,\pi/2]$, which we denote by $\angle_p(\lambda, \lambda')$.
We define the angle between $\lambda$ and $\lambda'$ to be $\sup_{p \in \lambda \cap \lambda'} \angle_p(\lambda, \lambda')$ and denote it by $\angle_S(\lambda,\lambda')$.

A measured lamination $\lambda$ is said to be {\em uniquely ergodic} if the transverse measure of $\lambda$ is a unique transverse measure on its support up to scaling.
A pair of geodesic (or measured) laminations $\lambda_1$ and $\lambda_2$ is said to {\em fill up} $S$ if  every geodesic lamination $\mu$ on $S$ intersects $\lambda_1$ or $\lambda_2$ transversely.

For a  measured lamination $\lambda$ or a geodesic lamination supporting a  measured lamination on $S$, its {\em minimal supporting surface} $S(\lambda)$ is an incompressible compact subsurface containing $\lambda$ and is minimal with respect to the inclusion, which is unique up to isotopy.

Geodesic laminations and measured laminations defined above depend on the hyperbolic metric given on $S$.
Still for two complete hyperbolic metrics $m, n$ on $S$, and a geodesic lamination $\lambda$ on $(S,m)$, there is a unique geodesic lamination $\lambda'$ on $(S,n)$ which is isotopic to $\lambda$.
By identifying $\lambda$ and $\lambda'$ as above, we can talk about geodesic laminations and measured laminations without specifying a hyperbolic metric.
We note the intersection number does not depend on the choice of a hyperbolic metric whereas the angle does depend on it.

Thurston proved that the space of measured laminations with the weak topology with respect to the transverse measures is homeomorphic to the Euclidean space of dimension $6g-6+2b$, where $g$ is the genus and $b$ is the number of punctures of $S$.
We denote this space by $\ml(S)$ and call it the {\em measured lamination space} of $S$.
There is a PL local chart of $\ml(S)$, which can be constructed  using train tracks as in the next section.

\subsection{Train tracks}
\label{train track}
We shall define basic terms on train tracks in this subsection.
We refer the reader to Penner-Harer \cite{PH} for a more detailed account.

A {\em train track} $\tau$ on $S$ is a $C^1$-graph (i.e.\ a graph whose edges are $C^1$-arcs and tangent to each other at vertices) embedded in $S$ whose edges are called {\em branches} and whose vertices are called {\em switches}, such that no component of $S\setminus \tau$ is a disc with one corner or an annulus with $C^1$-smooth boundary.
A {\em weight system} $\omega$ on a train track $\tau$ is a system of non-negative numbers, called {\em weights},  given on branches of $\tau$ such that at each switch the sum of the weights on the incoming branches coincides with the sum of the weights on the outgoing branches.

A geodesic lamination $\lambda$ is said to be {\em carried by} a train track $\tau$, when it can be regularly homotoped to an immersion in $\tau$.
Any geodesic lamination has a train track  carrying it.
In particular, if a measured lamination $\lambda$ is carried by a train track $\tau$, it induces a weight system on $\tau$, by defining the weight of a branch to be  the total transverse measure of the leaves lying there (after a regular homotopy).
We denote this weight system induced from $\lambda$ by $w(\lambda)$.
Conversely, for any weight system $\omega$ on a train track, we can construct a measured lamination $\lambda$ such that $w(\lambda)=\omega$.

A train track $\tau$ is said to be {\em recurrent} if it has a weight system which takes only positive values, and {\em transversely recurrent} if for each branch $b$ of $\tau$, there is a simple closed curve intersecting $\tau$ essentially (i.e. without cobounding a bigon) and transversely with non-empty intersection with $b$.
Train tracks which are both recurrent and transversely recurrent are called {\em bi-recurrent}.
Every measured lamination is carried by a bi-recurrent train track. 
For a bi-recurrent train track $\tau$, the set of measured laminations inducing weight systems with positive values on $\tau$ forms an open set in $\ml(S)$, which we denote by $U(\tau)$.
For an arational measured lamination $\lambda$, the open sets $U(\tau)$ for all bi-recurrent train tracks $\tau$ carrying $\lambda$ form a base of neighbourhoods of $\lambda$ in $\ml(S)$.

\subsection{Pleated surfaces}
Let $M$ be a hyperbolic 3-manifold, and $F$ an orientable surface, which we assume  to be either closed or the interior of a compact surface.
A {\em pleated surface} $f \colon (F, m) \to M$, where $m$ is a complete hyperbolic metric on $F$,  is a continuous map taking each cusp of $(F,m)$ to a cusp of $M$ such that for every point $x \in F$, there is at least one geodesic segment containing $x$ in its interior which is mapped isometrically to a geodesic segment in $M$ by $f$.
The set of points on $F$ at which only one direction is mapped geodesically constitutes a geodesic lamination $\mu$ on $(F,m)$.
We call  $\mu$ the {\em pleating locus} of the pleated surface $f$.
More generally, if a geodesic (or a measured) lamination $\lambda$ is mapped geodesically by a pleated surface $f$, we say that $f$ {\em realises} $\lambda$.

The boundary component of the convex core of a hyperbolic 3-manifold is an example of pleated surface.
It has moreover a special property that the surface is bent only in one direction.
The pleating locus of such a surface has a transverse measure coming from bending angles, and is called the {\em bending lamination} when it is regarded as a measured lamination.

\subsection{Ending laminations}
\label{EL}
By Bonahon's tameness theorem \cite{Bon}, it is known that 
for any faithful discrete representation $\phi \colon \pi_1(S) \to \pslc$, there is an orientation-preserving homeomorphism  $\Phi \colon S \times (0,1) \to \hyperbolic^3/\phi(\pi_1(S))$ which induces $\phi$ between their fundamental groups.

For a hyperbolic 3-manifold $M=\hyperbolic^3/\phi(\pi_1(S))$, its {\em non-cuspidal part}, denoted by $M_0$, is the complement of $\epsilon$-thin cusp neighbourhoods for some fixed positive number $\epsilon$ smaller than the three-dimensional Margulis constant.
The boundary of $M_0$ consists of incompressible open annuli.
(In the case of general Kleinian groups, incompressible tori may appear.
We do not have such components since we only deal with Kleinian surface groups.)
By the relative core theorem (\cite{Sc, Mc}), there is a compact submanifold $C$ of $M_0$, called a {\em relative compact core} of $M_0$,  such that the inclusion is a homotopy equivalence and $C \cap \partial M_0$ is the union of core annuli of the components of $\partial M_0$.
A core curve of each component of $C \cap \partial M_0$ represents a generator of a maximal parabolic subgroup of $\phi(\pi_1(S))$.
We call these curves {\em parabolic curves}.
A relative compact core $C$ of $M_0$ is always homeomorphic to $S \times [0,1]$ in our setting.
We fix orientations on $S$ and $\hyperbolic^3$, and assume the identification of $S \times [0,1]$ with $C$ to preserve the orientations.

An {\em end} of $M_0$ is an inverse limit (with respect to the inclusion) of complementary components of compact sets in $M_0$.
Each component $U$ of $M_0 \setminus C$ contains a unique end $e$ of $M_0$, and also its closure contains a unique component $\Sigma$ of $\Fr_{M_0} C$, where $\Fr_{M_0}$ denotes the frontier as a subspace of $M_0$.
In this situation, we say that $\Sigma$ {\em faces} the end $e$.
The end $e$ is said to be {\em geometrically finite} when it has a neighbourhood disjoint from any closed geodesic, and otherwise {\em geometrically infinite}.
If $e$ is geometrically finite, there is a boundary component $F$ of the convex core $C(M)$ such that $F \cap M_0$ is isotopic to $\Sigma$.
This component $F$ in turn corresponds to a component of $\Omega_G/G$ which is regarded as lying at infinity.

When $e$ is geometrically infinite, it was proved in \cite{Bon} that there is a sequence of simple closed curves $c_i$ on $\Sigma$ which are homotopic in $U \cup \Sigma$ to  closed geodesics $c_i^*$ tending to the end.
Such an end is called {\em simply degenerate}.
Regarding $c_i$ as a geodesic lamination on $\Sigma$, after fixing any hyperbolic metric on $\Sigma$, we consider the Hausdorff limit $c_\infty$ of $c_i$, which is a geodesic lamination.
It was shown by Thurston \cite{ThL} and Bonahon \cite{Bon} that $c_\infty$ has only one minimal component $\lambda$, which  is called the {\em ending lamination} of $e$, and that $S(\lambda)=\Sigma$.
The geodesic lamination $\lambda$  is the support of a measured lamination which is a limit of $\{r_i c_i\}$ in the space of measured laminations, where $r_i$ is a positive scalar.

The notion of ending lamination was first introduced by Thurston using pleated surfaces as follows.
Let $\Sigma$ be a subsurface of $S$ as above.
If the end $e$ facing $\Sigma$ is geometrically infinite, there is a sequence of pleated surfaces $\{f_i\}$ homotopic to the inclusion of $\Sigma$ which tends to $e$.
For instance, in the setting of the preceding paragraph,   pleated surfaces realising the simple closed curves $c_i$ are  such  pleated surfaces.
Thurston considered the Hausdorff limit of the geodesic laminations realised by such pleated surfaces, and proved that the limit has only one minimal component, which is defined to be the ending lamination of $e$.
He also showed that the ending lamination thus defined does not depend on the choice of pleated surfaces.

Recall that the relative compact core $C$ is identified with $S \times [0,1]$.
When a parabolic curve lies on $S_+=S\times \{1\}$ (resp.\ $S_-=S \times \{0\}$), we call it an {\em upper} (resp.\ a {\em lower}) parabolic curve.
When the end $e$ is above $C$, i.e., when $\Sigma$ lies on $S \times \{1\}$ (resp. $S \times \{0\}$), we say that the ending lamination $\lambda$ is an upper (resp. a lower) ending lamination.
It was also proved in \cite{ThL} and \cite{Bon} that for each upper (resp. lower)  ending lamination $\lambda$ of $\hyperbolic^3/\phi(\pi_1(S))$, each boundary component of $S(\lambda)$ is an upper (resp. lower) parabolic curve.
%
%
We call the union of the parabolic curve and the ending laminations  regarded as lying on $S_-\sqcup S_+$ the {\em qi(quasi-isometric)-end invariant} of $\phi$ (or $\hyperbolic^3/\phi(\pi_1(S))$).
In particular, the union of those lying on $S_+$ (resp. $S_-$) is  called the upper (resp. lower) qi-end invariant.

\subsection{Deformation spaces}
\label{DS}
The space of faithful discrete representations of $\pi_1(S)$ into $\pslc$ modulo conjugacy is denoted by $\AH(S)$.
We endow $\AH(S)$ with the topology induced from the representation space.
Although each element of $\AH(S)$ is a conjugacy class of representations,
by abusing notation, we denote it by its representative.

The interior of $\AH(S)$ is known to be the quasi-Fuchsian space $\QF(S)$.
When $\phi \in \AH(S)$ is quasi-Fuchsian, letting $G$ be $\phi(\pi_1(S))$, the conformal structure on $\Omega_G/G$ induces a marked conformal structures at infinity, on $S \times \{0\}$ and $S \times \{1\}$.
We note that the identification of $S \times \{0\}$ to a component of $\Omega_G/G$ is orientation-preserving, but that of $S \times \{1\}$ is orientation-reversing.
We use the symbol $\teich(\bar S)$ to denote the Teichm\"{u}ller space of $S$ with its orientation reversed.
Then the conformal structure on $\Omega_G/G$ determines a point in $\teich(S) \times \teich(\bar S)$.
Bers showed that this identification of the conformal structure on $\Omega_G/G$ and a point in  $\teich(S) \times \teich(\bar S)$  gives a parametrisation of $\QF(S)$, which we denote by $q \colon \teich(S) \times \teich(\bar S) \to \QF(S)$.

For a general point $\phi \in \AH(S)$ and $G=\phi(\pi_1(S))$, let $\lambda_-$ and $\lambda_+$ be the lower and upper qi-end invariants of $\hyperbolic^3/\phi(\pi_1(S))$.
We recall that $\lambda_-$ (resp.\ $\lambda_+$) has the property that for any component $\lambda$ of $\lambda_-$ (resp.\ $\lambda_+$), every boundary component of $S(\lambda)$ is contained in $\lambda_-$ (resp.\ $\lambda_+$).
The quotient of the region of discontinuity $\Omega_G/G$ is identified with the disjoint union of $S_- \setminus  S(\lambda_-)$ and $S_+\setminus  S(\lambda_+)$, which we denote by $\Sigma_-$ and $\Sigma_+$.
The ending lamination theorem, proved by Minsky and Brock-Canary-Minsky \cite{Mi,BCM}, shows that any Kleinian group in $\AH(S)$ having $\lambda_-$ and $\lambda_+$ as lower and upper qi-end invariants is a quasi-conformal deformation of $G$.
Therefore, we denote the quasi-conformal deformation space of $G$ by $\QH_{\lambda_-, \lambda_+}$.
The theory of Bers-Kra-Maskit-Marden-Sullivan shows that the conformal structures at infinity give a parametrisation $q \colon \teich(\Sigma_-) \times \teich(\Sigma_+) \to \QH_{\lambda_-, \lambda_+}$.

\subsection{Geometric limits}
For a sequence of Kleinian groups $\{G_i\}$, we say that $\{G_i\}$ converges to a Kleinian group $\Gamma$ {\em geometrically} if (i) every element $\gamma \in \Gamma$ is a limit of some sequence $\{g_i \in G_i\}$, and (ii) for every convergent subsequence $\{g_{i_j} \in G_{i_j}\}$, its limit lies in $\Gamma$.
The geometric convergence is equivalent to the pointed Gromov-Hausdorff convergence of the corresponding hyperbolic 3-manifolds: fixing a basepoint $x \in \hyperbolic^3$ and letting $x_i$ and $x_\infty$ be the projections of $x$ to $\hyperbolic^3/G_i$ and $\hyperbolic^3/\Gamma$ respectively, the sequence of pointed hyperbolic 3-manifolds $\{(\hyperbolic^3/G_i, x_i)\}$ converges to $(\hyperbolic^3/\Gamma, x_\infty)$ in the sense of Gromov-Hausdorff if and only if $\{G_i\}$ converges to $\Gamma$ geometrically.
Due to this fact, we also refer to Gromov-Hausdorff limits as geometric limits.
The compactness of Gromov-Hausdorff topology shows that every sequence of non-elementary Kleinian groups has a geometric limit after passing to a subsequence.
We recall that, by definition, if $\{(\hyperbolic^3/G_i, x_i)\}$ converges to $(\hyperbolic^3/\Gamma, x_\infty)$ in the sense of Gromov-Hausdorff, then there is a $K_i$-bi-Lipschitz diffeomorphism, which is called an {\em approximate isometry} between the $R_i$-ball around $x_i$ and the $K_iR_i$-ball around $x_\infty$, with $K_i \longrightarrow 1$ and $R_i\longrightarrow \infty$.

Suppose that a sequence $\{\phi_i\}$ in $\AH(S)$ converges to $\psi \in \AH(S)$.
In this situation, we always assume that we take representatives so that $\{\phi_i\}$ converges to $\psi$ as genuine representations from $\pi_1(S)$ into $\pslc$.
Then, passing to a subsequence,  $\{\phi_i(\pi_1(S))\}$ converges to some Kleinian group $\Gamma$.
From the definition of geometric limits, it is easy to see that $\Gamma$ contains $\psi(\pi_1(S))$.
If $\Gamma=\psi(\pi_1(S))$, we say that $\{\phi_i\}$ converges to $\psi$ {\em strongly}.

In Ohshika-Soma \cite{OS} a classification of geometric limits of Kleinian surface groups was given.
An alternative description was also given in \cite{OhT}.
We shall now review some of the results there which will be used in the proof of the main theorem, in particular, in \cref{b}.

Let $\{\phi_i\}$ be a sequence in $\AH(S)$, and suppose that $\{\phi_i(\pi_1(S))\}$ converges geometrically to a Kleinian group $\Gamma$.
The following is a paraphrase of a part of Theorem A in \cite{OS}.

\begin{theorem}
\label{OS}
The non-cuspidal part $(\hyperbolic^3/\Gamma)_0$ is topologically embedded  in $S \times (0,1)$ in such a way that the following hold.
\begin{enumerate}[(a)]
\item
Every end of $(\hyperbolic^3/\Gamma)_0$ is mapped to a horizontal surface $\Sigma \times \{t\}$, where $\Sigma$ is an incompressible subsurface of $S$ and $t$ lies in $[0,1]$.
\item Every geometrically finite end of $(\hyperbolic^3/\Gamma)_0$ lies on either $S \times \{0\}$ or  $S \times \{1\}$.
\item Each boundary component of the convex core of $(\hyperbolic^3/\Gamma)_0$ is a Gromov-Hausdorff limit of  boundary components of convex cores of $(\hyperbolic^3/\phi_i(\pi_1(S))_0$ with some base points.
\item Every geometrically infinite end is either simply degenerate or an accumulation set of countably many torus cusps or simply degenerate ends or both.
\end{enumerate}
\end{theorem}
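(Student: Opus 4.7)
The plan is to exploit the tameness of each approximating Kleinian surface group $\phi_i(\pi_1(S))$ together with the approximate isometries furnished by the geometric convergence. By Bonahon's tameness theorem, for each $i$ there is an orientation-preserving homeomorphism $\Phi_i \colon S \times \mathbb{R} \to \hyperbolic^3/\phi_i(\pi_1(S))$ inducing $\phi_i$ on fundamental groups; after removing $\epsilon$-thin cusps this yields a product identification of $(\hyperbolic^3/\phi_i(\pi_1(S)))_0$. Exhaust the non-cuspidal part $(\hyperbolic^3/\Gamma)_0$ by a nested sequence of relative compact cores $C_1 \subset C_2 \subset \cdots$ containing the basepoint $x_\infty$. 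For each fixed $n$ and all sufficiently large $i$, the approximate isometries embed $C_n$ into $(\hyperbolic^3/\phi_i(\pi_1(S)))_0$ bi-Lipschitzly.

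Next I would argue that each $\iota_i(C_n)$, being a compact incompressible submanifold of $S \times \mathbb{R}$, is isotopic to a horizontally framed product region $\Sigma^{(i,n)} \times [a_{i,n},b_{i,n}]$ for an incompressible subsurface $\Sigma^{(i,n)} \subset S$. This follows from Waldhausen-type uniqueness of incompressible surfaces in $S \times \mathbb{R}$ together with the structure of parabolic locus. Pulling these product coordinates back via $\iota_i^{-1}$ gives embeddings of $C_n$ into $S \times (0,1)$ which are compatible up to isotopy for different $i$ and $n$; a diagonal selection then produces the desired topological embedding of the whole $(\hyperbolic^3/\Gamma)_0$ into $S \times (0,1)$.

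Properties (a) and (b) drop out from the construction: every end of $(\hyperbolic^3/\Gamma)_0$ is approached by horizontal boundaries of $C_n$, which the embedding sends to product surfaces $\Sigma \times \{t\}$, and the conformal (geometrically finite) ends correspond to the outermost levels $t=0$ or $t=1$ because they arise from components of $\Omega_{\phi_i(\pi_1(S))}/\phi_i(\pi_1(S))$ at infinity that survive under the approximate isometries. Property (c) follows from continuity of the nearest-point projection onto the Nielsen hull under Gromov–Hausdorff limits: any point on a boundary component of $C(\hyperbolic^3/\Gamma)$ is the limit of projections of points in $\hyperbolic^3/\phi_i(\pi_1(S))$, so that component arises as a Gromov–Hausdorff limit of corresponding boundary components of convex cores with suitable basepoints.

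The main obstacle is (d), which classifies geometrically infinite ends of the limit. Given such an end $e$, one has to distinguish two sources: either $e$ is matched by a single sequence of geometrically infinite ends of $(\hyperbolic^3/\phi_i(\pi_1(S)))_0$ under the approximate isometries, in which case Thurston's definition via pleated surfaces transports to the limit and makes $e$ simply degenerate; or the geometry of the $\phi_i$'s creates, through shrinking curves and colliding Margulis tubes, infinitely many new cusps or degenerate ends in a neighbourhood of $e$, producing an accumulation set. Ruling out further pathologies requires a careful curve-by-curve analysis of which simple closed curves have bounded $\phi_i$-length and which degenerate, combined with Canary's covering theorem to control the tame ends of covers that can arise as intermediate geometric limits, and with Margulis-lemma style bookkeeping in the thin part. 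This step is the most delicate and forms the technical heart of the argument in \cite{OS}.
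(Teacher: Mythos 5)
This statement is not proved in the paper at all: the authors explicitly describe it as ``a paraphrase of a part of Theorem A in \cite{OS}'' and simply cite Ohshika--Soma for it. So there is no ``paper's own proof'' to compare against; what you have written is an attempted reconstruction of the argument from \cite{OS}, and it should be judged on those terms.

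Your high-level strategy---exhaust $(\hyperbolic^3/\Gamma)_0$ by relative compact cores, push them into the approximating tame manifolds via approximate isometries, and try to build a compatible embedding into $S\times(0,1)$ in the limit---is indeed the natural starting point and is in the spirit of \cite{OS}. However, the step you dispatch as a routine consequence of ``Waldhausen-type uniqueness'' is actually the heart of the theorem and does not follow as stated. The image $\iota_i(C_n)$ in $S\times\mathbb{R}$ is a compact $\pi_1$-injective submanifold, but $\pi_1(C_n)$ is in general strictly larger than $\pi_1(S)$ (the geometric limit $\Gamma$ typically contains $\psi(\pi_1(S))$ as an infinite-index subgroup), and a generic compact incompressible submanifold of $S\times\mathbb{R}$ is emphatically \emph{not} isotopic to a horizontally framed product region $\Sigma\times[a,b]$---it can, for example, be a regular neighbourhood of several non-parallel incompressible surfaces, or wind around parabolic loci. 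Showing that the images can nevertheless be isotoped to a horizontal configuration, and that these isotopies can be made compatible across $i$ and $n$, is precisely where the bi-Lipschitz model manifolds of Minsky and the associated hierarchy machinery enter the real proof; nothing in your sketch supplies a substitute. Similarly, your justification of (b) (``outermost levels because they survive under the approximate isometries'') and your treatment of (d) are acknowledgements of what must be shown rather than arguments for it, and (c) needs an explicit appeal to the stability of convex hulls under geometric convergence of limit sets rather than a bare invocation of continuity of nearest-point projection. In short: the outline is reasonable as a roadmap, but it leaves the load-bearing lemmas of \cite{OS} unproved, and one of your intermediate claims (product structure of $\iota_i(C_n)$) is false as stated without the additional structure those lemmas provide.
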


Identifying $(\hyperbolic^3/\Gamma)_0$ with the image of its embedding in $S \times (0,1)$ as above, we can talk about the horizontal direction and the vertical direction in $(\hyperbolic^3/\Gamma)_0$.

The following lemma, which is \cite[Lemma 4.13]{OhT} and also can be found in \cite[\S3]{BBCL}, will be used in \cref{b}.
\begin{lemma}
\label{alg loc}
In the setting as above, suppose moreover that $\{\phi_i\}$ converges to $\psi \in \AH(S)$.
Then the image of the inclusion of $\psi(\pi_1(S))$ into $(\hyperbolic^3/\Gamma)_0$ is represented by an immersion $f_\infty \colon S \to (\hyperbolic^3/\Gamma)_0$ which is horizontal except for disjoint annuli in $S$ whose images wrap around torus cusps.
\end{lemma}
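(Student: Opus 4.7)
The plan is to pass from the algebraic to the geometric description via the natural covering $p \colon \hyperbolic^3/\psi(\pi_1(S)) \to \hyperbolic^3/\Gamma$ induced by the inclusion $\psi(\pi_1(S)) \hookrightarrow \Gamma$. By Bonahon's tameness theorem applied to $\psi$, the manifold $M_\psi := \hyperbolic^3/\psi(\pi_1(S))$ is homeomorphic to $S \times \reals$, which provides an embedded horizontal section $f_0 \colon S \hookrightarrow M_\psi$. Setting $f_\infty := p \circ f_0$ gives a map that represents the inclusion of $\psi(\pi_1(S))$ into $\Gamma$ on $\pi_1$, and I plan to homotope $f_\infty$, inside $(\hyperbolic^3/\Gamma)_0$, into the form described in the statement.

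The first step is to identify where the annuli come from. Each rank-$1$ parabolic subgroup $P$ of $\psi(\pi_1(S))$ remains parabolic in $\Gamma$, but its maximal parabolic subgroup in $\Gamma$ may have rank $2$, producing a torus cusp of $\hyperbolic^3/\Gamma$. When the rank does not jump, the corresponding end of $M_\psi$ maps into a rank-$1$ cusp of $\hyperbolic^3/\Gamma$ and creates no obstruction to horizontality. When it does jump, the $p$-image of a Margulis neighbourhood of the cusp of $M_\psi$ is an annulus winding around the torus cusp; intersecting this annulus with $f_0(S)$ and truncating at the $\epsilon$-thin part gives a collection of disjoint annuli in $S$ whose $f_\infty$-images lie on torus cusp boundaries of $(\hyperbolic^3/\Gamma)_0$ and wrap around them.

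The second step is horizontal straightening on the complement $S'$ of these annuli. For a compact subsurface $K \subset S'$, algebraic convergence $\phi_i \to \psi$ together with the approximate isometries $\Phi_i$ from large balls in $\hyperbolic^3/\phi_i(\pi_1(S))$ to balls in $\hyperbolic^3/\Gamma$ implies that the restriction of $\Phi_i$ to a $\phi_i$-horizontal lift of $K$ becomes Hausdorff-close to $f_\infty|_K$. \cref{OS}(a) and (c) ensure that the ends of $(\hyperbolic^3/\Gamma)_0$ are horizontal in its embedding in $S \times (0,1)$ and that components of the convex-core boundary arise as limits of their $\phi_i$-counterparts; extracting a diagonal subsequence and patching the resulting pieces together yields a horizontal representative of $f_\infty$ on $S'$.

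The hardest part will be matching the two regimes along the boundary circles of the annuli, so that the horizontal map on $S'$ and the wrapping annular map join up to a genuine immersion into $(\hyperbolic^3/\Gamma)_0$. This requires controlling the approximate isometries $\Phi_i$ uniformly in a fixed Margulis tube around each accidental parabolic: since $\psi$-parabolics are automatically $\Gamma$-parabolic, the $\Phi_i$-pullbacks of the relevant Margulis neighbourhoods in $\hyperbolic^3/\Gamma$ approach standard cusp neighbourhoods in $\hyperbolic^3/\phi_i(\pi_1(S))$, and this local model forces the two pieces to agree on the common boundary.
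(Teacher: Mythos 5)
The paper does not prove this lemma itself: it cites \cite[Lemma 4.13]{OhT} and \cite[\S 3]{BBCL}, whose arguments presumably go through the structure of Minsky model manifolds and their geometric limits; your covering-map strategy is therefore a genuinely different and more elementary route. Your first step is sound: the core $c$ of a wrapping annulus must have $f_\infty(c)$ freely homotopic into the boundary torus of a cusp of $\hyperbolic^3/\Gamma$, so $\psi(c)$ is parabolic, and the wrapping annuli indeed correspond to rank-$1$ parabolics of $\psi$ whose rank jumps to $2$ in $\Gamma$; moreover $p\circ f_0$ lies in the right homotopy class.

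However, the second and third steps leave a genuine gap. Horizontality is a statement about position relative to the embedding $(\hyperbolic^3/\Gamma)_0\hookrightarrow S\times(0,1)$ of \cref{OS}, and the approximate isometries $\Phi_i$ carry no a priori respect for that product structure: they are bi-Lipschitz diffeomorphisms of large balls, the ``horizontal lift of $K$'' in $\hyperbolic^3/\phi_i(\pi_1(S))$ may be chosen at any level, and nothing in \cref{OS}(a) or (c) by itself forces the images $\Phi_i(\text{horizontal lift})$ to accumulate on a single horizontal level of the embedded image rather than sliding vertically. More seriously, the matching you rightly flag as hardest is where the content of the lemma actually lives, and your appeal to the local cusp model does not resolve it: that model is invariant under Dehn twisting along the core of the cusp, so the horizontal piece and the wrapping annulus may disagree by an arbitrary twist along their common boundary circle. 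Controlling the $\Phi_i$ ``uniformly in a fixed Margulis tube'' cannot pin this down, because the Margulis tubes in $\hyperbolic^3/\phi_i(\pi_1(S))$ converge geometrically to the $\integers^2$-cusp precisely because the relevant twisting parameter blows up (cf.\ \cref{torus}); the residual finite twist felt by the algebraic locus is what determines the immersion, and extracting it requires some combinatorial bookkeeping (markings, hierarchies, or an equivalent substitute) that your outline does not supply.
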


An immersion $f_\infty$ as above is called an {\em algebraic locus}.
We note that for an approximate isometry $\rho_i$ from $\hyperbolic^3/\phi_i(\pi_1(S))$ to $\hyperbolic^3/\Gamma$, the composition $\rho_i^{-1} \circ f_\infty$ induces the same isomorphism as $\phi_i$ between the fundamental groups for sufficiently large $i$.

By \cref{OS,alg loc}, we can show the following.
\begin{corollary}
\label{strong}
Suppose that $\{\phi_i \}$ in $\AH(S)$ converges to $\psi \in \AH(S)$.
If $(\hyperbolic^3/\Gamma)_0$ does not have a torus boundary component and every geometrically infinite end is homotopic into an algebraic locus, then $\{\phi_i\}$ converges to $\psi$ strongly.
\end{corollary}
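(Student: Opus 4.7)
To prove strong convergence, the task is to show $\Gamma = \psi(\pi_1(S))$; the inclusion $\psi(\pi_1(S)) \subset \Gamma$ is automatic from \cref{alg loc}, so the content is surjectivity. The approach is to demonstrate that the algebraic locus $f_\infty : S \to (\hyperbolic^3/\Gamma)_0$ serves as a ``core'' of the geometric limit, forcing $\psi$ to be surjective onto $\Gamma$, which in turn forces the covering $p : \hyperbolic^3/\psi(\pi_1(S)) \to \hyperbolic^3/\Gamma$ to be a homeomorphism.

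The first step exploits the no-torus-boundary hypothesis: it eliminates the wrapping annuli in \cref{alg loc}, so $f_\infty$ becomes globally horizontal with respect to the embedding $(\hyperbolic^3/\Gamma)_0 \hookrightarrow S \times (0,1)$ from \cref{OS}. Thus $f_\infty(S)$ is a compact horizontal immersed surface representing $\psi(\pi_1(S))$. Next, the ends of $(\hyperbolic^3/\Gamma)_0$ must be analysed via \cref{OS}: geometrically finite ends lie on $S \times \{0\}$ or $S \times \{1\}$ by (b), and geometrically infinite ends, using (d) together with the absence of torus cusps, are either simply degenerate or accumulations of countably many simply degenerate ends. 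Since the hypothesis requires every geometrically infinite end to be homotopic into the compact set $f_\infty(S)$, an infinite accumulation is ruled out, so every geometrically infinite end is simply degenerate with minimal supporting subsurface sitting, up to homotopy, inside $f_\infty(S)$.

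The final step is to combine these observations. By Bonahon's tameness applied to $\psi$, the non-cuspidal part of $\hyperbolic^3/\psi(\pi_1(S))$ is homeomorphic to $S \times (0,1)$ minus cusp neighbourhoods, and a chosen lift $\tilde f_\infty$ of $f_\infty$ through $p$ is a homotopy equivalence with its image. Using the horizontal structure of \cref{OS}, one sweeps out $(\hyperbolic^3/\Gamma)_0$ vertically from $f_\infty(S)$: each vertical ray exits either through a geometrically finite end at height $0$ or $1$, matched by the corresponding geometrically finite end of the algebraic limit, or through a simply degenerate end homotopic back into $f_\infty(S)$. This end-by-end matching, together with the local homeomorphism around $f_\infty(S)$ coming from $\tilde f_\infty$, will force $p$ to be globally a homeomorphism, giving $\Gamma = \psi(\pi_1(S))$.

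The main obstacle is the final propagation step: upgrading the local homotopy equivalence along $f_\infty(S)$ and the end-by-end matching into a global homeomorphism between $\hyperbolic^3/\psi(\pi_1(S))$ and $\hyperbolic^3/\Gamma$. This requires careful manipulation of the horizontal embedding of \cref{OS} to ensure no ``hidden'' topology of $\Gamma$ exists outside the vertical sweep from $f_\infty(S)$; the hypothesis that every geometrically infinite end homotopes into the algebraic locus is precisely what prevents such hidden topology, while the no-torus-boundary assumption prevents additional cusp-loops in $\Gamma$ that would lie outside $\psi(\pi_1(S))$.
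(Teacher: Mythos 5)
Your proposal correctly identifies the first two steps, which match the paper exactly: the absence of torus boundary components kills the wrapping annuli so that $f_\infty$ is genuinely horizontal, and the hypothesis that each geometrically infinite end is homotopic into the algebraic locus (hence lifts to the tame algebraic limit) rules out wild ends, leaving only simply degenerate ones. Where the proposal falls short is precisely where you flag it yourself: the ``final propagation step.'' You propose to show that the covering $p\colon \hyperbolic^3/\psi(\pi_1(S)) \to \hyperbolic^3/\Gamma$ is a homeomorphism via a ``vertical sweep'' and ``end-by-end matching,'' but this is left as a description of what should be true rather than a demonstration. The phrase ``upgrading the local homotopy equivalence... into a global homeomorphism'' is exactly the claim that needs to be proved, and the proposal offers no mechanism for it.

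The paper closes this gap with a concrete topological computation inside the geometric limit, without manipulating the covering map directly. After establishing that each geometrically infinite end is simply degenerate and lifts to the algebraic limit, it observes that no end of $(\hyperbolic^3/\Gamma)_0$ can sit on the far side of a simply degenerate end in the $S \times (0,1)$ embedding of \cref{OS}; one can therefore isotope the embedding so that every geometrically infinite end lies on $S \times \{0\} \cup S\times\{1\}$. By \cref{OS}-(a) the geometrically finite ends also lie there, and each boundary annulus of $(\hyperbolic^3/\Gamma)_0$ has both ends at the same level since it cannot cross the horizontal algebraic locus. This identifies $(\hyperbolic^3/\Gamma)_0$ as the complement of finitely many half solid tori near $S\times\{0,1\}$, hence homeomorphic to $S\times(0,1)$; since the horizontal surface $f_\infty(S)$ is then a deformation retract inducing $\psi$ on $\pi_1$, one concludes $\psi(\pi_1(S)) = \Gamma$ directly, with no need to argue about the covering. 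You should replace the vague ``sweep'' step with this explicit identification of the topology of the geometric limit's non-cuspidal part.
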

\begin{proof}
Since we assumed that $(\hyperbolic^3/\Gamma)_0$ does not have a torus boundary component, the algebraic locus $f_\infty$ in \cref{alg loc} cannot wrap around a boundary component, and hence is a horizontal surface.
By assumption, every geometrically infinite end can be lifted to the algebraic limit, and hence simply degenerate.
This implies that in the embedded image of $(\hyperbolic^3/\Gamma)_0$ in $S \times (0,1)$, there is no other end on the side farther from $f_\infty(S)$ of each geometrically infinite end.
Therefore, we can isotope the embedding so that every geometrically infinite end  lies on $S \times \{0\} \cup S \times \{1\}$.

By \cref{OS}-(a), every geometrically finite also lies on $\Sigma \times \{0,1\}$.
Each boundary component of $(\hyperbolic^3/\Gamma)_0$, which is an open annulus, has both ends on the same level, either on $S\times \{0\}$ or $S \times \{1\}$, for it cannot pass through the algebraic locus.
This means that $(\hyperbolic^3/\Gamma)_0$ coincides with the complement of finitely many half solid tori lying on a neighbourhood of $S\times \{0\}$ or $S \times \{1\}$.
Therefore it is homeomorphic to $S \times (0,1)$, and hence $\psi(\pi_1(S))=\Gamma$.
\end{proof}

As explained in  \cite{OS, OhT}, if a sequence $\{\phi_i\}$ in $\AH(S)$ converges geometrically to a Kleinian group $\Gamma$, a geometric limit of (uniform) bi-Lipschitz model manifolds of $(\hyperbolic^3/\phi_i(S))_0$ (due to Minsky \cite{Mi}) serves as a bi-Lipschitz model manifold of $(\hyperbolic^3/\Gamma)_0$.
Suppose that $\hyperbolic^3/\phi_i(\pi_1(S))$ has a lower qi-end invariant $\lambda_-$ and an upper qi-end invariant $\lambda_+$.
Let $\Sigma_-$ and $\Sigma_+$ be $S\setminus  S(\lambda_-)$ and $S \setminus S(\lambda_+)$ respectively, and 
$\mathbf m_i^-$ and $\mathbf m_i^+$  the  structures at infinity of $\hyperbolic^3/\phi_i(\pi_1(S))$ on $\Sigma_-$ and $\Sigma_+$ respectively. 
Let $P_i^-$ and $P_i^+$ be shortest pants decompositions (with respect to the hyperbolic length) of $(\Sigma_-, \mathbf m_i^-)$ and $(\Sigma_+, \mathbf m_i^+)$ respectively.
By adding a shortest transversal simple closed curve to each component of $P_i^-$  (resp.$P_i^+$) disjoint from all the other components, we get shortest markings $M_i^-$  (resp. $M_i^+$).
Minsky's model manifold is constructed from the hierarchy of tight geodesics connecting the generalised markings $M_i^-\cup \lambda_-$ and $M_i^+ \cup \lambda_+$.

In this setting, we have the following lemmas.

\begin{lemma}
\label{torus}
Suppose that $\{\phi_i(\pi_1(S))\}$ converges geometrically to $\Gamma$.
Let $l$ be a non-contractible simple closed curve on $S$.
We assume that the translation length of $\phi_i(l)$ is bounded from above as $i \longrightarrow \infty$.
Let $\epsilon$ be a positive constant less than the three-dimensional Margulis constant.

\begin{enumerate}
\item
If the $\epsilon$-Margulis tube around the closed geodesic representing $\phi_i(l)$ converges geometrically to a torus cusp neighbourhood in $\hyperbolic^3/\Gamma$ as $i \longrightarrow \infty$, then both $\len_{\mathbf m_i^+}(l)$ and $\len_{\mathbf m_i^-}(l)$  (if one or both of them are defined) are bounded from below by a positive constant, and $d_{A(l)}(M_i^- \cup \lambda_-, M_i^+ \cup \lambda_+)$ goes to $\infty$.
Here $A(l)$ denotes an annular neighbourhood of $l$ and $d_{A(l)}$ the distance in the curve complex of $A(l)$.
Conversely if both $\len_{\mathbf m_i^+}(l)$ and $\len_{\mathbf m_i^-}(l)$ are bounded from below by a positive constant and $d_{A(l)}(M_i^-\cup \lambda_-, M_i^+ \cup \lambda_+)$ goes to $\infty$, then the $\epsilon$-Margulis tube around the closed geodesic representing $\phi_i(l)$ converges geometrically to either a torus cusp neighbourhood or a $\integers$-cusp attached to a geometrically infinite end.

\item
Suppose moreover that $\{\phi_i\}$ converges in $\AH(S)$, and let $k$ be a simple closed curve intersecting $l$ essentially.
If the $\epsilon$-Margulis tube around the closed geodesic representing $\phi_i(l)$ converges geometrically to a torus cusp lying above (resp.\ below) an algebraic locus as $i \longrightarrow \infty$, then $d_{A(l)}(k, M_i^+ \cup \lambda_+)$ (resp.\ $d_{A(l)}(k, M_i^-\cup \lambda_-)$) goes to $\infty$.
In the case when an algebraic locus wraps around a torus cusp, we regard the cusp as lying both above and below the locus.
Conversely, if $d_{A(l)}(k, M_i^+\cup \lambda_+)$ (resp.\ $d_{A(l)}(k, M_i^-\cup \lambda_-)$) goes to $\infty$, then the $\epsilon$-Margulis tube around the closed geodesic representing $\phi_i(l)$ converges geometrically to either a torus cusp neighbourhood or a $\integers$-cusp neighbourhood attached to a geometrically infinite end.
\end{enumerate}
\end{lemma}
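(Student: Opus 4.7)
The plan is to reduce everything to Minsky's combinatorial model manifold. By the discussion immediately preceding the lemma, $(\hthree/\phi_i(\pi_1(S)))_0$ is uniformly bi-Lipschitz equivalent to a model $\mathcal{M}_i$ built from the hierarchy joining $M_i^-\cup\lambda_-$ and $M_i^+\cup\lambda_+$, and a geometric limit of the $\mathcal{M}_i$ serves as a bi-Lipschitz model for $(\hthree/\Gamma)_0$. Two consequences of Minsky's short-curve theorem supply the key input: (i) the translation length $\ell_i$ of $\phi_i(l)$ is small precisely when the subsurface distance $d_Y(M_i^-\cup\lambda_-,M_i^+\cup\lambda_+)$ is large for some essential $Y$ with $l\subset\partial Y$ or $Y=A(l)$; and (ii) the meridional twist (equivalently, the modulus) of the Margulis tube $T_i$ around the geodesic representing $\phi_i(l)$ is coarsely equal to $d_{A(l)}(M_i^-\cup\lambda_-,M_i^+\cup\lambda_+)$.

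For the forward implication of part (1), assume $T_i$ converges geometrically to a compact torus cusp neighbourhood $T_\infty$. Then $\ell_i\to 0$ and the tube modulus diverges, so (ii) yields the divergence of the annular subsurface distance. If, say, $\len_{\mathbf m_i^+}(l)\to 0$ along a subsequence, Sullivan's comparison between the boundary of the convex core and the conformal boundary forces $l$ to be short on the top boundary of the convex core as well, so $T_i$ extends as a non-compact collar out to the geometrically finite end on $\Sigma_+$; a geometric limit based on the core geodesic then reaches a geometrically finite end of $(\hthree/\Gamma)_0$, contradicting compactness of $T_\infty$. The lower side is symmetric. Conversely, bounded $\len_{\mathbf m_i^\pm}(l)$ together with divergent annular distance imply $\ell_i\to 0$ with unbounded tube radius by (i)--(ii); after passing to a subsequence, a pointed Gromov--Hausdorff limit $T_\infty$ of $T_i$ based on the core geodesic is a cusp neighbourhood in $(\hthree/\Gamma)_0$. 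The length bounds at infinity, together with \cref{OS}-(b,c), prevent $T_\infty$ from reaching a geometrically finite end, so by \cref{OS}-(a,d) it is either a compact rank-$2$ cusp neighbourhood or a $\integers$-cusp attached to a geometrically infinite end.

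For part (2), the algebraic locus $f_\infty$ from \cref{alg loc} horizontally splits the model of $(\hthree/\Gamma)_0$ into an upper and a lower part; the marking induced by $\psi(\pi_1(S))$ on $f_\infty(S)$ has bounded-geometry projection to every annular complex $A(l)$, so any piece of the hierarchy lying beneath $f_\infty$ projects to $A(l)$ within bounded distance of a fixed transversal $k$ to $l$. Consequently $d_{A(l)}(k, M_i^+\cup\lambda_+)$ records precisely the twist around the geodesic representing $\phi_i(l)$ that accumulates above $f_\infty$. A torus cusp lying above $f_\infty$ requires this upper twist to diverge, yielding the forward implication; the converse repeats the preceding argument applied only to the upper half of the model, giving the dichotomy between a torus cusp above the locus and a $\integers$-cusp attached to a geometrically infinite end above it. The convention that a torus cusp wrapped by the algebraic locus is regarded as lying both above and below the locus handles that boundary case.

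The main obstacle is the dichotomy appearing in the two converses: unbounded annular subsurface distance with boundary lengths bounded away from zero always forces a cusp in the geometric limit, but distinguishing a rank-$2$ cusp from a rank-$1$ cusp attached to a geometrically infinite end requires controlling the horizontal behaviour of the model manifold away from $T_i$, which is precisely what \cref{OS} is organised to provide. In part (2), the additional subtlety is that one-sided divergence of subsurface projections must be matched with one-sided geometric phenomena relative to the algebraic locus, and this uses the quasi-isometric splitting of the model along $f_\infty$ supplied by \cref{alg loc}.
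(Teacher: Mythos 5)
Your skeleton — Minsky's model manifold, the short-curve theorem for the annular projection $d_{A(l)}(M_i^-\cup\lambda_-, M_i^+\cup\lambda_+)$, and the algebraic locus as a horizontal splitting of the model — is the same framework the paper uses. In part (1) you diverge slightly by invoking Sullivan's comparison between conformal and convex-core-boundary lengths to rule out $\len_{\mathbf m_i^\pm}(l)\to 0$; the paper instead argues directly that a vanishing boundary length at infinity forces the corresponding geometrically finite block of the model to split along $l$, producing a $\integers$-cusp in the limit. Either route is acceptable, and the use of \cite[Lemma 9.4]{Mi} for the remaining dichotomy is the same.

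There is, however, a genuine gap in the forward direction of part (2), in the wrapping case. When the algebraic locus $f_\infty(S)$ wraps $n\neq 0$ times around the torus cusp $V_\infty$, the statement asserts that \emph{both} $d_{A(l)}(k, M_i^+\cup\lambda_+)$ and $d_{A(l)}(k, M_i^-\cup\lambda_-)$ diverge, and you invoke the convention without demonstrating this. Worse, your key step — that "any piece of the hierarchy lying beneath $f_\infty$ projects to $A(l)$ within bounded distance of a fixed transversal $k$ to $l$" — is exactly what fails in the wrapping case: as $i\to\infty$ the effective transversal to $l$ carried by the algebraic locus itself twists without bound. The paper addresses this by producing a sequence of integers $r(i)$ with $|r(i)|\to\infty$ so that the annular geodesic in the hierarchy runs from a point coarsely equal to $\pi_{A(l)}(\tau_l^{nr(i)}(k))$ to one coarsely equal to $\pi_{A(l)}(\tau_l^{(n+1)r(i)}(k))$, which then forces both projection distances from $k$ to diverge; analogous Dehn-twist bookkeeping is needed to complete your argument. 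The converse directions of parts (1) and (2) are sketched at a level that is acceptable given the cited input, but the wrapping case in the forward direction of part (2) cannot be dismissed as a convention.
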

\begin{proof}
Recall that the model manifold of $(\hyperbolic^3/\Gamma)_0$ can be taken to be a geometric limit of models constructed from hierarchies of tight geodesics.
If $\len_{\mathbf m_i^-}(l)$ or $\len_{\mathbf m_i^+}(l)$ goes to $0$ after passing to a subsequence, then the corresponding geometrically finite block of the model manifold of $(\hyperbolic^3/\phi_i(\pi_1(S)))_0$ splits along a simple closed curve representing $l$ as $i \longrightarrow\infty$.
This shows that the geometric limit of the Margulis tubes around the closed geodesic representing $\phi_i(l)$ must be a $\integers$-cusp in $\hyperbolic^3/\Gamma$ in this case.

Now suppose that $\len_{{\mathbf m}_i^-}(l)$ and $\len_{\mathbf m_i^+}(l)$ are bounded from below by a positive constant if one or both of them are defined.
Then by \cite[Lemma 9.4]{Mi}, we see that the Margulis tubes around the closed geodesic corresponding to $\phi_i(l)$ converge geometrically to a torus cusp neighbourhood if and only if $d_{A(l)}(M_i^-\cup \lambda_-, M_i^+ \cup \lambda_+)$ goes to $\infty$.
Thus we are done for the part (1).

A proof of the part (2) can be found in \cite[Proof of Theorem 5.2]{OhT}.
We summarise its argument here.
Consider the situation where the Margulis tube $V_i$ around the closed geodesic corresponding to $\phi_i(l)$ converges to a torus cusp neighbourhood $V_\infty$.
First suppose that the algebraic locus does not wrap around $V_\infty$ and that  $V_\infty$ lies above an algebraic locus.
Recall that Minsky's model manifold is constructed from a hierarchy $H_i$ of tight geodesics in the curve complex of (subsurfaces of) $S$.
If the Margulis tube $V_i$ converges to a torus cusp neighbourhood, then the annular neighbourhood $A(l)$ supports a geodesic $g_i$ in $H_i$ whose length goes to $\infty$.
The last vertex of $g_i$ is within bounded distance from $\pi_{A(l)}(M_i^+\cup \lambda_+)$ as $i \longrightarrow \infty$, where $\pi_{A(l)}$ denotes the projection between the curve complexes $\cc(S)$ and $\cc(A(l))$ induced by restricting curves to $A(l)$.
On the other hand, since the torus cusp lies above an algebraic locus, the initial vertex of $g_i$ is within bounded distance from $\pi_{A(l)}(k)$ as $i \longrightarrow \infty$.
Thus, we have $d_{A(l)}(k, M_i^+ \cup \lambda_+)\longrightarrow \infty$.
The same argument works also in the case when $V_\infty$ lies below an algebraic locus.
(See \cite[Claim 5.3]{OhT} for more details.)

Next suppose that an algebraic locus wraps $n$-times around $V_\infty$ with $n \neq 0$.
Then there is a sequence of integers $r(i)$ whose absolute values go to $\infty$ such that the initial vertex of $h_i$ is within bounded distance from $\pi_{A(l)}(\tau_l^{nr(i)}(k))$ and the last vertex is within bounded distance from $\pi_{A(l)}(\tau_l^{(n+1)r(i)}(k))$, where $\tau_l$ denotes the  Dehn twist around $l$.
The initial vertex of $h_i$ is also within bounded distance from $\pi_{A(l)}(M_i^-\cup \lambda_-)$, and 
the last vertex of $h_i$ is also within bounded distance from $\pi_{A(l)}(M_i^+ \cup \lambda_+)$.
Therefore both $d_{A(l)}(k, M_i^-\cup \lambda_-)$ and $d_{A(l)}(k, M_i^+ \cup \lambda_+)$ go to $\infty$ in this case.

Finally, suppose that $d_{A(l)}(k, M_i^-\cup \lambda_-)$ goes to $\infty$.
Then by \cite[Lemma 9.4]{Mi} again, we see that the boundary of the Margulis tube $\partial V_i$ either converges to the boundary of a torus cusp neighbourhood or diverges to give rise to an open annulus whose end is attached to a lower end of $(\hyperbolic^3/\Gamma)_0$.
Since $d_{A(l)}(k,M_i^-\cup \lambda_-) \longrightarrow \infty$, the curve $l$ cannot be contained in the shortest pants decomposition $M_i$.
Therefore, the upper geometrically finite block of the model manifold of $(\hyperbolic^3/\phi_i(\pi_1(S)))_0$ cannot split along $l$, and hence neither end of the geometric limit of $\partial V_i$ cannot tend to $S\times \{0\}$.
It follows that the end of $(\hyperbolic^3/\Gamma)_0$ to which an end of the geometric limit of $\partial V_i$ tends must be geometrically infinite by \cref{OS}-(b).
Thus we have shown that the second part of (2) holds.
\end{proof}

\begin{lemma}
\label{simply deg}
Suppose  that $\{\phi_i\}$ converges in $\AH(S)$ to $\psi$.
Let $\Sigma$ be an open incompressible subsurface of $S$, with negative Euler characteristic.
Then,  $\{P_i^+|\Sigma\}$ (resp.\ $\{P_i^-|\Sigma\}$) converges (in the Hausdorff topology) to a geodesic lamination containing a minimal component $\lambda$ whose minimal supporting surface is $\Sigma$ if and only if there is an upper (resp. a lower) simply degenerate end of $(\hyperbolic^3/\psi(\pi_1(S)))_0$ whose ending lamination is $\lambda$.
\end{lemma}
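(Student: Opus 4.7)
The plan is to analyse Minsky's bi-Lipschitz model manifold for $(\hyperbolic^3/\phi_i(\pi_1(S)))_0$ and its behaviour under geometric limits, in the spirit of \cref{torus} but for subsurfaces of higher Euler characteristic; only the upper case will be discussed, the lower being symmetric. Passing to a subsequence, one may assume $\{\phi_i(\pi_1(S))\}$ converges geometrically to a Kleinian group $\Gamma \supset \psi(\pi_1(S))$, so that Minsky's models for $(\hyperbolic^3/\phi_i(\pi_1(S)))_0$ converge to a bi-Lipschitz model of $(\hyperbolic^3/\Gamma)_0$; by \cref{alg loc}, the inclusion $\psi(\pi_1(S)) \hookrightarrow \Gamma$ is represented by a horizontal algebraic locus $f_\infty \colon S \to (\hyperbolic^3/\Gamma)_0$. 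The main technical hurdle throughout is the transfer between the geometric limit $\Gamma$, where Minsky's subsurface-projection machinery directly applies, and the algebraic limit $\psi(\pi_1(S))$, where the ending lamination is defined; this is accomplished by combining the horizontality of $f_\infty$ with \cref{OS}-(a).

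For the forward direction, suppose $\{P_i^+|\Sigma\}$ Hausdorff converges to a geodesic lamination having a minimal component $\lambda$ with $S(\lambda)=\Sigma$. Since $\lambda$ fills $\Sigma$, the subsurface projections $\pi_\Sigma(P_i^+)$, and hence also $\pi_\Sigma(M_i^+)$ (which lies within bounded curve-complex distance of $\pi_\Sigma(P_i^+)$), diverge in $\cc(\Sigma)$ toward $\lambda$ in the Gromov boundary. Through the hierarchy connecting $M_i^- \cup \lambda_-$ and $M_i^+ \cup \lambda_+$, this divergence forces the Minsky model of $(\hyperbolic^3/\phi_i(\pi_1(S)))_0$ to contain a tight geodesic supported on $\Sigma$ whose length tends to infinity and whose terminal vertex stays within bounded distance of $\pi_\Sigma(M_i^+)$. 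Taking the geometric limit (via the higher-complexity analogue of \cite[Lemma 9.4]{Mi} used in the proof of \cref{torus}) produces an upper simply degenerate end of $(\hyperbolic^3/\Gamma)_0$ facing a horizontal copy of $\Sigma$ in $S \times \{1\}$ with ending lamination $\lambda$. Since $f_\infty$ is horizontal and every end of $(\hyperbolic^3/\Gamma)_0$ is horizontal by \cref{OS}-(a), this end lies above $f_\infty(\Sigma)$ and descends through $f_\infty$ to an upper simply degenerate end of $(\hyperbolic^3/\psi(\pi_1(S)))_0$ facing $\Sigma$ with the same ending lamination $\lambda$, where we use that $\partial\Sigma$ consists of upper parabolic curves since $S(\lambda)=\Sigma$.

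For the converse, suppose $(\hyperbolic^3/\psi(\pi_1(S)))_0$ has an upper simply degenerate end facing $\Sigma$ with ending lamination $\lambda$. Pass to a subsequence so that $\{P_i^+|\Sigma\}$ Hausdorff converges to a geodesic lamination $\mu$. If $\pi_\Sigma(M_i^+)$ were to remain bounded in $\cc(\Sigma)$, then Minsky's model theory would produce a geometrically finite upper end of $(\hyperbolic^3/\Gamma)_0$ on $\Sigma$, and by \cref{alg loc} together with the parabolicity of $\partial\Sigma$, the corresponding end of the algebraic limit would also be geometrically finite, contradicting the hypothesis. Hence $\pi_\Sigma(M_i^+) \to \infty$ in $\cc(\Sigma)$; combined with the length bound $\len_{\mathbf m_i^+}(P_i^+) \le$ Bers' constant, this forces $\mu$ to fill $\Sigma$. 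Applying the forward direction to this subsequence identifies the minimal component of $\mu$ supported on $\Sigma$ with the ending lamination of the resulting upper simply degenerate end facing $\Sigma$, which by uniqueness of the algebraic ending invariant must coincide with $\lambda$.
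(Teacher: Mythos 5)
The paper does not actually prove \cref{simply deg}; it cites it as contained in \cite[Proposition 4.18, Theorem 5.2]{OhT}, its generalisations in \cite[\S10]{OhT}, and alternatively \cite[Theorem 1.2]{BBCL}. Your proposal is therefore not ``the same as the paper's proof'' in form, but it is a plausible sketch of the kind of model-manifold/subsurface-projection/geometric-limit argument that those cited results rest on, and the skeleton (divergence of $\pi_\Sigma(M_i^\pm)$ $\Leftrightarrow$ a long tight geodesic on $\Sigma$ in the hierarchy $\Leftrightarrow$ a simply degenerate end of the geometric limit facing $\Sigma$, then transfer to the algebraic limit via the horizontal algebraic locus $f_\infty$) is the right skeleton.

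That said, the sketch glosses over exactly the step that makes those cited theorems nontrivial: the transfer from the geometric limit $\Gamma$ to the algebraic limit $\psi$. You assert that the simply degenerate end of $(\hyperbolic^3/\Gamma)_0$ facing $\Sigma$ at $S\times\{1\}$ ``descends through $f_\infty$'' to an upper end of $(\hyperbolic^3/\psi(\pi_1(S)))_0$ with the same ending lamination. This needs an argument. One has to identify which end of $\Gamma$ the upper end of $\psi$ facing $\Sigma$ actually projects to --- in the paper's language (\cref{nearest}), only an end \emph{nearest to} $f_\infty(S)$ is guaranteed to lift to the algebraic limit --- and then argue that the end you produced is that one, and that no intermediate torus cusp or other end facing (a subsurface cutting) $\Sigma$ intervenes. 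Your justification ``$\partial\Sigma$ consists of upper parabolic curves since $S(\lambda)=\Sigma$'' is also not a proof: the fact that the boundary of the minimal supporting surface of an \emph{ending lamination} is parabolic is precisely what one is trying to establish about $\psi$; the correct route is that $\partial\Sigma$ is parabolic in $\Gamma$ as the boundary of a simply degenerate region, hence parabolic in the subgroup $\psi(\pi_1(S))\subset\Gamma$. The converse direction has the mirror-image gap: boundedness of $\pi_\Sigma(M_i^+)$ controls the top block of the model of $\hyperbolic^3/\phi_i(\pi_1(S))$, but you still need to rule out that $\psi$'s upper end on $\Sigma$ is a \emph{new} simply degenerate end or new cusp sitting between $f_\infty$ and that top block, which requires an argument on where $f_\infty$ sits relative to the hierarchy geodesic $g_i^\Sigma$. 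These are precisely the points that \cite{OhT} and \cite{BBCL} treat carefully, so the proposal should either fill them in or make explicit that it relies on those sources for them.
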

\begin{proof}
This is contained in \cite[Proposition 4.18, Theorem 5.2]{OhT} and their generalisations explained in \cite[\S10]{OhT}, or alternatively in Brock-Bromberg-Canary-Lecuire \cite[Theorem 1.2]{BBCL}.
\end{proof}

\section{Main theorems}
The purpose of this paper is to prove the following theorem, which is a generalisation of the existence part of the theorem of Bonahon-Otal \cite{BO} to general, possibly geometrically infinite Kleinian surface groups.
We do not have the uniqueness part of Bohaon-Otal's theorem (in the case when the bending laminations are multi-curves), but instead have compactness for the set of Kleinian groups realising given data of qi-end invariants and bending laminations.

\begin{theorem}
\label{main}
Let $S$ be a closed oriented surface of genus greater than $1$, and $S_-, S_+$ two copies of $S$, where $S_+$ has the same orientation as $S$ whereas $S_-$ has the opposite orientation.
\begin{itemize}
\item
Let $\lambda_-$ and $\lambda_+$ be  (possibly empty) geodesic laminations without non-compact isolated leaves on $S_-$ and $S_+$  respectively, such that for every component $\lambda$ of $\lambda_-$ (resp.\ $\lambda_+$), each boundary component of the minimal supporting surface $S(\lambda)$ is isotopic to a closed geodesic contained in $\lambda_-$ (resp.\ $\lambda_+$).

\item
Let $\mu_-$ and $\mu_+$ be measured laminations  on  $S_-$ and $S_+$ such that
\begin{enumerate}[{\rm (a)}]
\item $\lambda_- \cap \mu_-=\emptyset$ and $\lambda_+ \cap \mu_+=\emptyset$;
\item neither $\mu_-$  nor $\mu_+$ contains a compact leaf of weight larger than or equal to $\pi$; and
\item  $\lambda_-\sqcup \mu_-$ and $ \lambda_+ \sqcup \mu_+$ fill up $S$ if we identify both $S_-$ and $S_+$ with $S$.
\end{enumerate}
\end{itemize}
Then the following hold.
\begin{enumerate}[{\rm (1)}]
\item 
\label{end and bend}
There is $\phi \in \AH(S)$ such that
\begin{enumerate}[{\rm (i)}]
\item
the hyperbolic 3-manifold $\hyperbolic^3/\phi(\pi_1(S))$  has $\lambda_-$  as its lower qi-end invariant and $\lambda_+$ as its upper qi-end invariant, and
\item the hyperbolic 3-manifold $\hyperbolic^3/\phi(\pi_1(S))$ realises  $\mu_-$ and $\mu_+$ as the bending laminations on the lower and the upper boundaries respectively of its convex core $C(\hyperbolic^3/\phi(\pi_1(S))$.
\end{enumerate}
\item The set of all $\rho \in \AH(S)$ satisfying the condition {\rm (\ref{end and bend})} is a compact subset of $\QH_{\lambda_-, \lambda_+}$. 
\end{enumerate}
\end{theorem}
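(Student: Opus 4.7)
The plan is to follow the strategy announced in the introduction: we parametrize $\QH_{\lambda_-, \lambda_+}$ via the Bers--Kra--Maskit--Marden--Sullivan map $q\colon \teich(\Sigma_-)\times \teich(\Sigma_+)\to \QH_{\lambda_-, \lambda_+}$, compose with the bending-lamination map $b\colon \QH_{\lambda_-,\lambda_+}\to \ml(\Sigma_-)\times \ml(\Sigma_+)$, and let $D$ denote the set of pairs $(\mu_-,\mu_+)$ that violate at least one of (a), (b), (c) in the statement. The whole theorem follows from the single claim that $b\circ q$ is a proper map of degree one onto $(\ml(\Sigma_-)\times \ml(\Sigma_+))\setminus D$. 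Indeed, surjectivity onto the complement of $D$ yields part~(\ref{end and bend}), because the given $(\mu_-,\mu_+)$ lies in this complement by hypothesis, so a preimage under $b\circ q$ lifted by $q$ supplies a representation $\phi\in \QH_{\lambda_-,\lambda_+}\subset \AH(S)$ whose qi-end invariants are $\lambda_\pm$ by construction of $\QH_{\lambda_-,\lambda_+}$, and whose bending laminations are $\mu_\pm$. Part~(2) is just the statement that a proper map has compact fibres.

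I would organize the proof in two independent arguments. First, properness: assuming $x_i\in \teich(\Sigma_-)\times\teich(\Sigma_+)$ leaves every compact set, I would show that $b(q(x_i))$ either leaves every compact set in $\ml(\Sigma_-)\times\ml(\Sigma_+)$ or accumulates in $D$. The key input is the geometric limit analysis of \cref{OS}, \cref{alg loc}, \cref{torus} and \cref{simply deg}: passing to a subsequence with a geometric limit $\Gamma$, one tracks how new parabolic curves arise, how new simply degenerate ends are created, and how the convex-core boundary of $\hyperbolic^3/\phi_i(\pi_1(S))$ limits onto the convex-core boundary of $\hyperbolic^3/\Gamma$ by (c) of \cref{OS}. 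A limit bending lamination that does not stay in a compact subset of the complement of $D$ must either lose total mass (escape), gain a compact leaf of weight $\ge \pi$ by pinching, cross one of the $\lambda_\pm$ through a newly degenerate end, or fail to fill together with $\lambda_\pm$ because the supporting subsurface has shrunk.

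Second, degree one: I would construct a model map $E\colon \teich(\Sigma_-)\times\teich(\Sigma_+)\to \ml(\Sigma_-)\times\ml(\Sigma_+)\setminus D$ built from earthquakes along $\mu_-$ and $\mu_+$, more precisely by sending a pair of conformal structures to the pair of earthquake-deformed bending measures normalized so that $E$ has a transparent local degree-one behaviour (this is modelled on the analogous argument in the quasi-Fuchsian setting, where bending an almost-Fuchsian surface by $(\mu_-,\mu_+)$ realises those laminations to first order). I would then exhibit a proper homotopy between $b\circ q$ and $E$, again controlled by geometric-limit estimates so that the homotopy stays in the complement of $D$. Since proper homotopies preserve degree, $\deg(b\circ q)=\deg(E)=1$.

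The hard part, as noted by the authors themselves, is the properness step: one must rule out the delicate situation where the bending measures on both boundaries of the convex core of $\hyperbolic^3/\phi_i(\pi_1(S))$ converge nicely to a pair $(\mu_-,\mu_+)\notin D$ while $q(x_i)$ diverges. This requires pinning down why a diverging conformal structure at infinity cannot be hidden from the bending data, which is exactly where one needs the detailed information about algebraic versus geometric convergence from \cref{alg loc} and \cref{strong}, combined with the hierarchy/model estimates of \cref{torus} and \cref{simply deg} to force a genuine change in the combinatorics of the bending lamination on the side where the conformal structure degenerates.
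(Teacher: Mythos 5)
Your overall strategy is the same as the paper's: reduce everything to the single assertion that $b\circ q$ is a proper, degree-one map onto $(\ml(\Sigma_-)\times\ml(\Sigma_+))\setminus D$, derive existence from surjectivity and compactness of fibres from properness, and prove properness by a case analysis of geometric limits using \cref{OS}, \cref{alg loc}, \cref{torus} and \cref{simply deg}. That reduction and the properness outline (sequences converging strongly, converging algebraically but not strongly, or diverging, each producing either escape to infinity of the bending data or a compact leaf of weight tending to $\pi$ or a failure to fill) correspond to what the paper does in \cref{properness}, so the first half of your plan is sound.

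The degree-one step, however, has a genuine gap. The expression ``earthquakes along $\mu_-$ and $\mu_+$'' is not the right model: the paper's map $\ce$ (\cref{standard homeo}) is the inverse of the earthquake parametrization $E_-\times E_+$ of $\teich(\Sigma_-)\times\teich(\Sigma_+)$ by measured laminations based at a fixed hyperbolic structure $(g_-,g_+)$; it has nothing to do with the particular target pair $(\mu_-,\mu_+)$. More seriously, $\ce$ is a homeomorphism onto \emph{all} of $\ml(\Sigma_-)\times\ml(\Sigma_+)$, including $D$, so there is no earthquake-built map $E$ with image in the complement of $D$, and hence no direct proper homotopy from $b\circ q$ to a global model of the kind you describe. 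The paper's resolution is considerably more delicate: it one-point compactifies both domain and target to $\ct$ and $\cml$ (\cref{compactification}); fixes a single corner point $[\Lambda]\in\plc$ whose restrictions to all non-pants components are arational and uniquely ergodic and which avoids $D$ at infinity (\cref{Lambda}); constructs nested truncated cone neighbourhoods $U_0\subset U_1\subset U_2$ of $[\Lambda]$ on which $\ce$ and $b\circ q$ behave coherently (\cref{three rings}, using \cref{E at infinity}, \cref{infinity correspondence}, \cref{gen lim}, \cref{partial converse}); defines $F$ equal to $\ce$ near $[\Lambda]$, to $b\circ q$ far from $[\Lambda]$, and interpolating linearly in train-track coordinates in between (\cref{F}); and then verifies that both $F$ and the interpolating homotopy $H$ remain proper by re-running the geometric-limit machinery on the interpolation region (\cref{proper F} and the following lemma). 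The degree is then computed locally by counting preimages of the small cone $U_0$ (\cref{F degree}), not by any first-order bending calculation. Your appeal to ``bending an almost-Fuchsian surface by $(\mu_-,\mu_+)$ \dots to first order'' points toward the cone-manifold deformation technique of Bonahon--Otal, which the authors explicitly disclaim in the introduction because it is unavailable for geometrically infinite groups; the paper's degree argument is purely topological and deliberately independent of that machinery.
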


\begin{remark}
In the theorem above, the existence of $\mu_-, \mu_+$ satisfying (c) imposes on $\lambda_-, \lambda_+$ the condition that they share no minimal component.
\end{remark}

This theorem is an immediate consequence of  following \cref{proper degree 1}, which states something a bit stronger.

Let $\lambda_-$ and $\lambda_+$ be geodesic laminations on $S_-$ and $S_+$ as in \cref{main}, which share no minimal component.
Let $\Sigma_-$ and $\Sigma_+$ be the complements $S_- \setminus S(\lambda_-)$ and $S_+ \setminus  S(\lambda_+)$ respectively.
As we explained in \cref{DS}, we have a parametrisation $q \colon \teich(\Sigma_-) \times \teich(\Sigma_+) \to \QH_{\lambda_-, \lambda_+}$.
Let $b \colon \QH_{\lambda_-, \lambda_+} \to \ml(\Sigma_-) \times \ml(\Sigma_+)$  be the map taking $\rho\in \QH_{\lambda_-, \lambda_+}$ to the bending lamination of $C(\hyperbolic^3/\rho(\pi_1(S)))$. 

\begin{theorem}
\label{proper degree 1}
Let $D$ be the subset of $\ml(\Sigma_-) \times \ml(\Sigma_+)$ consisting of measured laminations which do not satisfy at least one of the conditions (b) and (c) in \cref{main}.
Then the map $b \circ q \colon \teich(\Sigma_-) \times \teich(\Sigma_+) \to \ml(\Sigma_-) \times \ml(\Sigma_+) \setminus D$ is a proper, degree-1 map.
\end{theorem}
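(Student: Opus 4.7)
The plan follows the two-step split indicated by the authors: first establish properness of $b \circ q$, and then compare $b \circ q$ with a model map built from earthquakes to get degree one.

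\emph{Properness.} I would argue by contradiction. Suppose $(m_-^i, m_+^i) \in \teich(\Sigma_-) \times \teich(\Sigma_+)$ eventually leaves every compact set, while the images $(\mu_-^i, \mu_+^i) := b \circ q(m_-^i, m_+^i)$ converge in $\ml(\Sigma_-) \times \ml(\Sigma_+)$ to some $(\mu_-, \mu_+) \notin D$. Write $\phi_i := q(m_-^i, m_+^i) \in \qsp$; after conjugating to fix a base frame and passing to a subsequence, arrange that $\{\phi_i(\pi_1(S))\}$ converges geometrically to a Kleinian group $\Gamma$ and that $\{\phi_i\}$ converges algebraically to some $\psi \in \AH(S)$. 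By \cref{OS} the non-cuspidal part of $\hyperbolic^3/\Gamma$ embeds in $S \times (0,1)$, and \cref{alg loc} places the algebraic locus horizontally away from annuli around torus cusps. Crucially, \cref{OS}-(c) says each boundary component of the limit convex core is a Gromov--Hausdorff limit of boundary components of $C(\hyperbolic^3/\phi_i(\pi_1(S)))$, so the bending transverse measures pass to the limit. Since $(m_-^i, m_+^i)$ escapes to infinity, the conformal structure at infinity on at least one of $\Sigma_\pm$ degenerates; \cref{torus} and \cref{simply deg} then force one of two alternatives. Either (A) a simple closed curve $l$ has $\mathbf{m}_i^\pm$-length going to $0$, producing a $\integers$-cusp or torus cusp in the geometric limit through $l$; in this case the bending angle across the corresponding short geodesic accumulates to a positive multiple of $\pi$, so the limit $\mu_-$ or $\mu_+$ contains a compact leaf of weight $\geq \pi$, placing $(\mu_-, \mu_+)$ in $D$. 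Or (B) a new minimal component emerges in the Hausdorff limit of $\{P_i^\pm|\Sigma\}$, i.e., a simply degenerate end appears in $\hyperbolic^3/\psi(\pi_1(S))$ whose ending lamination was not in $\lambda_\pm$; since the convex-core boundary pleated surfaces realise this new ending lamination in the limit, the supports of the limit bending laminations fail to fill with $(\lambda_-, \lambda_+)$, again landing in $D$. Either way we contradict $(\mu_-, \mu_+) \notin D$.

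\emph{Degree one.} I would produce a proper reference map $E \colon \teich(\Sigma_-) \times \teich(\Sigma_+) \to \ml(\Sigma_-) \times \ml(\Sigma_+) \setminus D$ that is manifestly degree one, and then exhibit a proper homotopy from $b \circ q$ to $E$ through maps into $\ml \times \ml \setminus D$. The natural candidate for $E$ uses Thurston's earthquake theorem: fix auxiliary hyperbolic structures $m_0^\pm \in \teich(\Sigma_\pm)$ and set $E(m_-, m_+) = (\mu_-(m_-), \mu_+(m_+))$, where $\mu_\pm(m_\pm)$ is the unique measured lamination along which the left earthquake carries $m_0^\pm$ to $m_\pm$. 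Each factor is a homeomorphism onto $\ml(\Sigma_\pm)$, so $E$ is a homeomorphism, hence proper and of degree $\pm 1$ (the sign fixed by orientation conventions). For a small generic choice of $m_0^\pm$, $E$ avoids any prescribed codimension-one subset of $D$. The homotopy would interpolate, on each side, between the convex-core bending lamination $\mu_\pm^{\mathrm{bend}}$ and the earthquake lamination $\mu_\pm^{\mathrm{earth}}$. By Sullivan's lemma the intrinsic hyperbolic structures on $\partial C(\hyperbolic^3/\phi(\pi_1(S)))$ are uniformly close in Teichm\"uller distance to $(m_-, m_+)$, so the families of pleated surfaces interpolating between the two constructions remain comparable. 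One checks that convex combinations $(1-t)\mu_\pm^{\mathrm{bend}} + t \mu_\pm^{\mathrm{earth}}$ preserve the strict bound $<\pi$ on weights of compact leaves, since neither endpoint attains $\pi$, and that the filling condition is stable along the interpolation.

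The hardest step will be degree one, specifically verifying that the interpolating homotopy is \emph{proper} into $\ml \times \ml \setminus D$. The obstruction is the same mechanism as in the properness argument: along a one-parameter family at infinity, both $b \circ q$ and $E$ could in principle drift toward $D$ simultaneously, and one must forbid this for the \emph{intermediate} maps as well. I would handle this by rerunning the geometric-limit analysis of the first part for the intermediate family of pleated realisations, using that the Bonahon--Epstein--Marden parametrisation of $\qsp$ by (convex-core intrinsic structure, bending lamination) depends continuously on earthquake deformations, so the degeneration dichotomy (A)/(B) above still applies uniformly in the homotopy parameter. Once properness of the homotopy is established, homotopy invariance of degree yields $\deg(b \circ q) = \deg(E) = 1$, completing the proof of \cref{proper degree 1}.
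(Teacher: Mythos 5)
Your outline follows the same two-step architecture as the paper (properness, then degree via comparison with an earthquake-based model map), but both steps contain genuine gaps.

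For properness, you write that after conjugating and passing to a subsequence one can ``arrange that $\{\phi_i(\pi_1(S))\}$ converges geometrically to a Kleinian group $\Gamma$ and that $\{\phi_i\}$ converges algebraically to some $\psi \in \AH(S)$.'' The geometric convergence is indeed available by Gromov--Hausdorff compactness, but the algebraic convergence is not: $\QH_{\lambda_-,\lambda_+}$ is noncompact and $\{q(\mathbf m_i^-,\mathbf m_i^+)\}$ can perfectly well diverge in $\AH(S)$. This divergent case (the paper's Case (c) in \S\ref{properness}) is the one that needs a separate mechanism, namely Thurston's efficient pleated surfaces and the vertical lamination $L$ of \cref{vertical}; one then uses \cref{shortest int} and the fact that $\nu$ fills together with $\lambda_\pm$ (condition (c)) to derive a contradiction. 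Your argument never touches this, so properness is not established. Within the convergent case your dichotomy (A)/(B) is roughly the right idea, but it collapses the paper's careful split into strong convergence (Case (a), where \cref{new parabolic} drives the bending angle to exactly $\pi$) versus non-strong convergence (Case (b), where one must locate a nearest new end or torus cusp via \cref{nearest} and run \cref{torus} together with \cref{homotopic geodesic}); stating that ``the bending angle accumulates to a positive multiple of $\pi$'' is not a justification and is not correct as stated, since bending angles lie in $[0,\pi]$.

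For degree one, the proposal is essentially unworkable as written. You propose a global homotopy via convex combinations $(1-t)\mu^{\mathrm{bend}} + t\,\mu^{\mathrm{earth}}$, but $\ml(\Sigma_\pm)$ is not a linear space: a sum of two measured laminations with transversely intersecting supports is not a measured lamination, so the interpolant is not even defined in general. The paper avoids this by fixing a single bi-recurrent train track $\tau$ carrying $\Lambda$ and interpolating only inside a truncated cone neighbourhood where both $\ce$ and $b\circ q$ land in $U(\tau)$; the homotopy is then a linear segment in the PL weight coordinates on $\tau$, well-defined by construction. Moreover, even if one somehow defines the interpolation, the assertion that ``the filling condition is stable along the interpolation'' is unsupported: there is no reason a segment in $\ml \times \ml$ between a bending lamination and an earthquake lamination should avoid $D$ globally. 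The paper's resolution is precisely not to homotope $b\circ q$ to $\ce$ globally, but to homotope it to a hybrid map $F$ (\cref{F}) equal to $b\circ q$ outside $V_2$, equal to $\ce$ inside $V_1$, and interpolated in between, then compute the degree locally over a small $\cU_0$ around $[\Lambda]$, where $F=\ce$ is a homeomorphism (\cref{F degree}). Verifying that both the hybrid map $F$ and the homotopy $H$ are proper requires the geometric-limit analysis again (\cref{proper F}), including Claims \ref{H-limit}--\ref{in 0}, which your sketch does not anticipate. In short, the plan to use the earthquake map as a degree-one model is sound, but the global convex-combination homotopy does not exist, and the missing ingredient is exactly the local truncated-cone construction around a corner point $[\Lambda]$ of the Thurston compactification product.
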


\section{Bending laminations}
We present some lemmas which will be useful for the proofs of our main theorems.
\begin{lemma}
\label{bound int lem}
For every $K >0$, there is a positive constant $L(K)$ depending only on $K$ which goes to $0$ as $K \longrightarrow 0$ with the following property.
For every $\phi \in \AH(S)$, letting $M=\hyperbolic^3/\phi(\pi_1(S))$, if $\Sigma$ is a boundary component of $C(M)$ with bending lamination $\lambda$ and c is
 a simple closed curve on  $\Sigma$ with length less than $K$, then  $\iota(c, \lambda)<L(K)$.
\end{lemma}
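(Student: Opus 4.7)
My plan is to prove the lemma by contradiction via a geometric-limit argument, exploiting the defining property of geodesic laminations (recalled in \cref{mss}) that no leaf tends to a cusp. As a preliminary reduction, if $c$ is a compact leaf of $\lambda$ (necessarily of weight less than $\pi$), then $\iota(c,\lambda)=0$ and the inequality is immediate, so I may assume $c$ is transverse to $\lambda$.

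Suppose the lemma fails. Then there exist $\epsilon_0>0$ and sequences $\phi_n\in\AH(S)$, boundary components $\Sigma_n$ of $C(M_n)$ with $M_n=\hyperbolic^3/\phi_n(\pi_1(S))$ carrying bending laminations $\lambda_n$, and simple closed curves $c_n\subset\Sigma_n$ with $\len_{\Sigma_n}(c_n)\to 0$ but $\iota(c_n,\lambda_n)\ge\epsilon_0$. Since the pleated surface $\Sigma_n\to M_n$ is $1$-Lipschitz, the closed geodesic $c_n^*$ representing $c_n$ satisfies $\len_{M_n}(c_n^*)\le\len_{\Sigma_n}(c_n)\to 0$. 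I choose basepoints $y_n\in c_n^*$; by Gromov-Hausdorff compactness and after extraction, $(M_n,y_n)$ converges geometrically to a pointed $3$-manifold $(M_\infty,y_\infty)$, and the Margulis tubes around $c_n^*$ converge to a rank-$1$ cusp neighborhood of $y_\infty$ in $M_\infty$. By \cref{OS}-(c), via the approximate isometries, $\Sigma_n$ converges to a pleated surface $\Sigma_\infty$ that is a component of $\partial C(M_\infty)$ and has a cusp at $y_\infty$ arising from the collapsing $c_n$.

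Next I pass the bending data to the limit. Since total bending measures of convex-core boundaries are uniformly bounded and $\iota(c_n,\lambda_n)\ge\epsilon_0$ concentrates a definite amount of transverse mass inside a collar of $c_n$ whose width in $\Sigma_n$ tends to infinity by the $2$-dimensional collar lemma, the laminations $\lambda_n$ subconverge—through bi-recurrent train-track charts $U(\tau)$ as in \cref{train track}—to a nontrivial measured lamination $\mu_\infty$ on $\Sigma_\infty$, supported in the bending lamination of $\Sigma_\infty$, whose transverse measure across every sufficiently small horocyclic loop around the cusp of $\Sigma_\infty$ is at least $\epsilon_0$. As $\mu_\infty$ has finite total mass, this is possible only if some leaf of $|\mu_\infty|$ tends to the cusp of $\Sigma_\infty$, contradicting the definition of geodesic lamination in \cref{mss}. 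Setting $L(K)$ to be (a small amount above) $\sup\{\iota(c,\lambda):\len_\Sigma(c)<K\}$ over all admissible data, we conclude that $L(K)$ is finite and $L(K)\to 0$ as $K\to 0$. The hardest step is the last one—extracting a nontrivial limit $\mu_\infty$ with the correct concentration of mass at the cusp—since one must uniformly control the transverse measures of $\lambda_n$ on the shrinking collar neighborhoods of $c_n$ simultaneously with the geometric convergence of the pleated surfaces $\Sigma_n$ to $\Sigma_\infty$.
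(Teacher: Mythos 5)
Your approach is genuinely different from the paper's, and it has a gap at exactly the point you flag as hardest. The paper's proof of this lemma is a single citation to Bridgeman: the boundedness of average bending gives a direct, \emph{local} and \emph{quantitative} estimate that the bending transverse to a geodesic arc of length $\ell$ on a convex pleated plane in $\hyperbolic^3$ is controlled by a function of $\ell$ tending to $0$ as $\ell\to 0$, because a pleated plane that bends too much over too short a distance fails to bound a convex region. No geometric limit is involved.

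The gap in your compactness argument is the assertion that the transverse mass $\geq\epsilon_0$ crossing $c_n$ persists in the geometric limit and gives a measured lamination $\mu_\infty$ on $\Sigma_\infty$ with $\iota(h,\mu_\infty)\geq\epsilon_0$ for horocycles $h$ at arbitrary depth. In a geometric limit where $c_n$ degenerates to a cusp, the leaves of $\lambda_n$ that cross $c_n$ converge, on compact subsets of $\Sigma_\infty$, to geodesics tending into the cusp; these are precisely the geodesics that \cref{mss} excludes from being leaves of a geodesic lamination, and the transverse mass they carry can a priori escape into the developing cusp. What you would need is a continuity (in particular, upper semicontinuity) statement for the bending cocycle $\alpha\mapsto\iota(\alpha,\lambda_n)$ along fixed arcs under geometric convergence across a developing cusp, and nothing in the soft framework you invoke --- \cref{OS}-(c), train-track charts $U(\tau)$, unique ergodicity --- supplies this. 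Indeed, that the escape of bending mass into a cusp does \emph{not} occur for convex-core boundaries is essentially the content of the lemma itself, so an argument that simply takes limits and reads off the conclusion from the definition of geodesic lamination is dangerously close to circular; the circle is broken only by an independent local convexity estimate, which is exactly Bridgeman's theorem and is why the paper cites it rather than running a limit argument. Two smaller remarks: the degenerating Margulis tubes need not give a rank-$1$ cusp in the limit (a torus cusp is also possible, though this is easily repaired); and your contradiction hypothesis shows $L(K)\to 0$ as $K\to 0$ but does not by itself show finiteness of $\sup\{\iota(c,\lambda):\len_\Sigma(c)<K\}$ for a fixed moderate $K$, which is also needed for the later use of this lemma in \cref{shortest int} and again comes for free from Bridgeman's bound.
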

\begin{proof}
This is a direct consequence of the boundedness of the average bending measure proved by Bridgeman \cite{Bri}.
\end{proof}

As a consequence of this lemma, we have the following.

\begin{corollary}
\label{shortest int}
Consider the situation in \cref{main}, and let $\Sigma$ be a component of $\Sigma_- \sqcup \Sigma_+$.
Let $\{\mathbf g_i\}$ be a sequence in $\teich(\Sigma_-) \times \teich(\Sigma_+)$.
Let $\gamma_i$ be a shortest pants decomposition of $\mathbf g_i|\Sigma$.
Then the sequence $\{\iota(b\circ q(\mathbf g_i), \gamma_i)\}$ is bounded.
\end{corollary}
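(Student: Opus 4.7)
The plan is to combine Bers' constant for pants decompositions with Sullivan's lemma, and then feed the resulting length bound into \cref{bound int lem}. Concretely, fix a component $\Sigma$ of $\Sigma_- \sqcup \Sigma_+$ of topological type $(g,n)$, and let $B = B(g,n)$ denote the Bers constant. Then for any point $\mathbf g_i|\Sigma \in \teich(\Sigma)$, a shortest pants decomposition $\gamma_i$ automatically has each component of length at most $B$ in the hyperbolic metric associated with $\mathbf g_i|\Sigma$, since Bers' theorem guarantees the existence of some pants decomposition with this bound and we chose the shortest.

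Next, I would invoke Sullivan's lemma (as mentioned in the introduction and proved in \cite{EM}): the hyperbolic structure on the component of the convex core boundary $\partial C(\hyperbolic^3/q(\mathbf g_i)(\pi_1(S)))$ corresponding to $\Sigma$ lies within a uniformly bounded Teichmüller distance of the conformal structure at infinity $\mathbf g_i|\Sigma$. This gives a universal $K$-bilipschitz comparison (independent of $i$) between the two hyperbolic metrics on $\Sigma$. Consequently, each component $c$ of $\gamma_i$, measured now in the intrinsic hyperbolic metric on the relevant convex core boundary component, has length at most $KB$.

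Now \cref{bound int lem} applies: for each such component $c$ of $\gamma_i$, we obtain
\[
\iota\bigl(c,\, b\circ q(\mathbf g_i)|\Sigma\bigr) < L(KB),
\]
where $L(KB)$ is the universal constant produced by the lemma. Since the number of components of any pants decomposition of $\Sigma$ is $3g - 3 + n$, summing over the components of $\gamma_i$ yields
\[
\iota\bigl(b\circ q(\mathbf g_i),\, \gamma_i\bigr) \;\leq\; (3g-3+n)\, L(KB),
\]
a bound independent of $i$, which is the desired conclusion.

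There is no real obstacle here; the only substantive points are the appeal to Sullivan's lemma to pass between the metric at infinity and the metric on the convex core boundary (needed because $\gamma_i$ is shortest with respect to $\mathbf g_i|\Sigma$, while \cref{bound int lem} measures length on the bent surface), and the observation that the intersection number $\iota$ is a topological invariant and therefore insensitive to which of these metrics one uses to represent $\gamma_i$.
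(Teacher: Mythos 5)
Your proof is correct and follows essentially the same route as the paper's: Bers' constant bounds the $\mathbf g_i$-length of each component of $\gamma_i$, and \cref{bound int lem} then controls the intersection with the bending lamination. In fact you are slightly more careful than the paper's one-line proof, which silently passes from the length with respect to the conformal structure at infinity $\mathbf g_i|\Sigma$ to the length on the convex core boundary (where \cref{bound int lem} actually measures length); your explicit appeal to Sullivan's lemma to supply this uniform comparison is exactly the step needed to make the argument airtight.
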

\begin{proof}
By Bers's lemma, there is a constant $C$ depending only on $S$ such that each component of the shortest pants decomposition of $(\Sigma, \mathbf g_i)$ has length less than $C$.
Therefore, by \cref{bound int lem}, $\iota(b\circ q(\mathbf g_i), \gamma_i)$ is bounded as $i \longrightarrow \infty$.
\end{proof}

\begin{lemma}
\label{homotopic geodesic}
For any $\epsilon >0$, there is a  positive constant $\delta>0$ depending only on $\epsilon$ satisfying the following.

Let $\{\phi_i\}$ be a sequence in $\QH_{\lambda_-, \lambda_+}$, and  $\Sigma$ a component of $\Sigma_- \sqcup \Sigma_+$.
Let $\mu_i$ be the restriction of the bending lamination of the convex core of $\hyperbolic^3/\phi_i(\pi_1(S))$ to (the component corresponding to) $\Sigma$.
We denote the hyperbolic metric on $\Sigma$ as a boundary component of the convex core $C(\hyperbolic^3/\phi_i(\pi_1(M)))$ by $m_i'$.
 
Suppose  
\begin{enumerate}[(i)]
\item that $\{\mu_i\}$ converges to  a measured lamination $\mu_\infty$,
\item
that  $\mu_\infty$ is decomposed into disjoint (possibly empty) sublaminations $\mu_\infty^1$ and $\mu_\infty^2$,
\item and that $c$ is a simple closed curve on $\Sigma$ such that $\iota(c, \mu_\infty^1) < \delta$ and $\angle_{(\Sigma,  m_i')}(c, \mu_\infty^2)< \delta$ for large $i$.
\end{enumerate}

Let $c_i$ be the closed geodesic on $\Sigma$ with respect to $m_i'$ freely homotopic to $c$, and $c_i^*$ the closed geodesic in $\hyperbolic^3/\phi_i(\pi_1(S))$ freely homotopic to $\phi_i(c)$.

Then we have $$1 \leq \frac{\len_{(\Sigma, m_i')}(c_i)}{\len_{\hyperbolic^3/\phi_i(\pi_1(S))}(c_i^*)} \leq 1+\epsilon.$$
\end{lemma}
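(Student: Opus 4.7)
\medskip

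\noindent\textbf{Plan.} The left inequality $1\le \len_{(\Sigma,m_i')}(c_i)/\len_{M_i}(c_i^*)$ is immediate from the structure of the convex core: the boundary component $(\Sigma,m_i')$ of $C(M_i)$ is a pleated surface and the inclusion into $M_i=\hyperbolic^3/\phi_i(\pi_1(S))$ is $\pi_1$-injective and path-isometric. Hence the image of $c_i$ under this inclusion is a loop of length exactly $\len_{(\Sigma,m_i')}(c_i)$ freely homotopic to $\phi_i(c)$, while $c_i^*$ is the infimum length loop in this class.

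For the right inequality I would work in the universal cover. Lift the pleated surface $(\Sigma,m_i')\to M_i$ to a pleated disc in $\hyperbolic^3$ by the developing map, and lift $c_i$ to a piecewise-geodesic curve $\hat c_i\subset\hyperbolic^3$ invariant under $\gamma_i:=\phi_i(c)$, whose one-period length equals $\len_{(\Sigma,m_i')}(c_i)$. The translation length of $\gamma_i$ is $\len_{M_i}(c_i^*)$, and since $\gamma_i$ moves every point on $\hat c_i$ by at most one period, we have $\len_{M_i}(c_i^*)\le\len_{(\Sigma,m_i')}(c_i)$; to upgrade this to the desired ratio bound we must show the exterior angles (kinks) of $\hat c_i$ have small total over one period. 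A direct computation in the bent book model shows that at a crossing of $c_i$ with a leaf of $\mu_i$ having local bending $d\alpha$ and crossing angle $\theta$ in $(\Sigma,m_i')$, the kink of $\hat c_i$ equals $2\arcsin(\sin\theta\sin(\alpha/2))$, which to leading order in the lamination is $\sin\theta\,d\alpha$.

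Next I would control the total kink $T_i$ over one period. Using the weak convergence $\mu_i\to\mu_\infty$ and the Hausdorff convergence of supports, decompose $\mu_i=\mu_i^1\sqcup\mu_i^2$ according to small regular neighbourhoods of $|\mu_\infty^1|$ and $|\mu_\infty^2|$; this is possible because $\mu_\infty^1$ and $\mu_\infty^2$ are disjoint. The contribution of $\mu_i^1$ to $T_i$ is bounded by the intersection $\iota(c_i,\mu_i^1)$, which converges to $\iota(c,\mu_\infty^1)<\delta$. The contribution of $\mu_i^2$ is bounded by $\sin(2\delta)\cdot\iota(c_i,\mu_i^2)$, since hypothesis (iii) together with continuity of the angle function under the neighbourhood decomposition yields $\angle_{(\Sigma,m_i')}(c_i,\mu_i^2)\le 2\delta$ for large $i$. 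Both contributions can be made arbitrarily small with $\delta$, at least relative to the length $\len_{(\Sigma,m_i')}(c_i)$.

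The main obstacle is the final passage from a bound on the total turning of $\hat c_i$ to the bound on the length ratio. I would apply the hyperbolic law of cosines inductively at the joints of $\hat c_i$: for two consecutive geodesic segments of lengths $L',L''$ meeting at a kink of size $\beta$, the chord length $d$ satisfies $\cosh d=\cosh L'\cosh L''-\sinh L'\sinh L''\cos\beta$, and a direct estimate gives $L'+L''-d\le (L'+L'')\sin^2(\beta/2)$. Iterating around one period and using that $\len_{M_i}(c_i^*)\le d(p_0,\gamma_i p_0)$ produces an estimate of the form $\len_{M_i}(c_i^*)\ge \len_{(\Sigma,m_i')}(c_i)\bigl(1-G(T_i/\len_{(\Sigma,m_i')}(c_i))\bigr)$ where $G\to 0$ at $0$. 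The subtlety is that $\iota(c,\mu_\infty^2)$ may be arbitrarily large, so one must interpret the turning estimate in terms of turning density: the total intersection, and hence turning, grows at most linearly with $\len_{(\Sigma,m_i')}(c_i)$, so the ratio $T_i/\len_{(\Sigma,m_i')}(c_i)$ is of order $\delta$ and can be forced below any threshold depending only on $\epsilon$. If a direct estimate proves too delicate, a contradiction argument — taking a putative bad sequence with $\delta\to 0$ and extracting Hausdorff/weak limits of pleated images of $c_i$ — reduces the problem to the degenerate case of zero effective bending, where the two lengths coincide.
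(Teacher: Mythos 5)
Your argument for the left inequality matches the paper's, and your overall strategy for the right inequality — bound the total turning of the lifted piecewise geodesic $\hat c_i$ and convert this into a multiplicative length defect — is sound in spirit, but it takes a genuinely different route from the paper. The paper decomposes the \emph{curve} $c_i$ into two arcs $c_i^1, c_i^2$ (small total bending met by $c_i^1$; small crossing angle along $c_i^2$) and then invokes two off-the-shelf estimates, Proposition~4.1 of Baba and Corollary~4.6 of Epstein--Marden--Markovic, which do precisely the trigonometric work you are trying to reconstruct from scratch. Your kink formula $\beta = 2\arcsin(\sin\theta\sin(\alpha/2))$ is correct, and the decomposition of $\mu_i$ via regular neighbourhoods of $|\mu_\infty^1|$ and $|\mu_\infty^2|$ is a workable dual of the paper's decomposition.

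However, there is a genuine gap in your final step. The inequality $\len_{M_i}(c_i^*)\le d(p_0,\gamma_i p_0)$ gives an \emph{upper} bound on the translation length and so cannot by itself produce the lower bound $\len_{M_i}(c_i^*)\ge \len(c_i)(1-G(\cdot))$ that you need; you would have to instead work with $\len_{M_i}(c_i^*)=\lim_n d(p_0,\gamma_i^n p_0)/n$ and prove a lower bound on the $n$-period chord that is uniform in $n$, or else appeal to a quantitative quasi-geodesic stability statement. Moreover, the ``iterate the hyperbolic law of cosines'' step is not as clean as stated: once you replace two consecutive segments of $\hat c_i$ by their chord, the angle between the chord and the next segment is \emph{not} the original exterior angle — it has been increased by the turning absorbed into the chord — so the per-joint defect bound $L'+L''-d\le (L'+L'')\sin^2(\beta/2)$ does not naively sum. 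This accumulation of angle is exactly the sort of bookkeeping that a ready-made result of Epstein--Marden--Markovic type handles; either cite such a result, make the induction hypothesis track the accumulated turning, or carry out the contradiction/limit argument you sketch at the end with a concrete compactness statement about geometric limits of pleated maps.
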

\begin{proof}
Since the length of $c_i^*$ is less than or equal to that of $c_i$, the first inequality obviously holds.

Since the Hausdorff limit of the support $|\mu_i|$ is a geodesic lamination containing $\mu_\infty=\mu_\infty^1 \sqcup \mu_\infty^2$, for sufficiently large $i$, we can decompose $c_i$ into two parts $c_i^2$ and $c_i^1$ such that $\angle_{(\Sigma,m_i')} (c_i^2, \mu_i) < 2\delta$ and $\iota(c_i^1, \mu_i)< 2\delta$.
By applying  \cite[Proposition 4.1]{Baba2} for $c_i^2$ and  \cite[Corollary 4.6]{EMM}  for $c_i^1$, we obtain the second inequality.
\end{proof}

\section{properness}
\label{properness}
In this section, we shall prove the properness of $b\circ q$ in \cref{proper degree 1}.
The argument is by contradiction.
Suppose that $b \circ q$ is not proper.
Then, there is a  sequence $\{(\mathbf m_i^-,\mathbf m_i^+)\} \subset \teich(\Sigma_-)\times \teich(\Sigma_+)$ without a convergent subsequence such that $\{b\circ q(\mathbf m_i^-,\mathbf m_i^+)\}$ converges in $\ml(\Sigma_-) \times \ml(\Sigma_+) \setminus D$ to a measured lamination $\nu$ on $\Sigma_- \sqcup \Sigma_+$.

We are going to analyse a geometric limit of $\{\hyperbolic^3/q(\mathbf m_i^-,\mathbf m_i^+)\}$, making use of results in Ohshika-Soma \cite{OS} and Ohshika \cite{OhT}.
Let $\phi_i \colon \pi_1(S) \to \pslc$ be a representative of $q(\mathbf m_i^-,\mathbf m_i^+)$ for each $i$.
We take the $\phi_i$ to converge to some $\psi \in \AH(S)$ if $\{q(\mathbf m_i^-,\mathbf m_i^+)\}$ converges in $\AH(S)$.


We divide our argument into the following  three cases:
\begin{enumerate}[(a)]
\item The case when $\{q(\mathbf m_i^-,\mathbf m_i^+)\}$ converges in $\AH(S)$ strongly after passing to a subsequence.

\item
The case when  $\{q(\mathbf m_i^-,\mathbf m_i^+)\}$ converges in $\AH(S)$ after passing to a subsequence, but not strongly.
\item
The case when $\{q(\mathbf m_i^-,\mathbf m_i^+)\}$ diverges in $\AH(S)$ (even after passing to a subsequence).

\end{enumerate}
In the cases (a) and (b), we put a basepoint $\tilde x$ in $\hyperbolic^3$, and by projecting it to $\hyperbolic^3/\phi_i(\pi_1(S))$, we get a basepoint $x_i$.
Taking the Gromov-Hausdorff limit of $(\hyperbolic^3/\phi_i(\pi_1(S)), x_i)$, we obtain a geometric limit $(M_\infty, x_\infty)$ which is covered by the algebraic limit $\hyperbolic^3/\psi(\pi_1(S))$.

Now we start to show  that in each of the three cases, we get a contradiction.

\subsection{Case (a)}
\label{a}
In this case, $M_\infty=\hyperbolic^3/\psi(\pi_1(S))$.
There are three possibilities for  $\psi$:
\begin{enumerate}[{(a)}-(1)]
\item The case when $\psi$ is also contained in $\qsp$.
\item The case when $(\hyperbolic^3/\psi(\pi_1(S))_0$ has a \lq new simply  degenerate end' not corresponding to a simply degenerate end of $(\hyperbolic^3/\phi_i(\pi_1(S)))_0$.
\item The case when $\hyperbolic^3/\psi(\pi_1(S))$ does not have  a new simply degenerate end, but has a \lq new $\integers$-cusp' not corresponding to a $\integers$-cusp of $(\hyperbolic^3/\phi_i(\pi_1(S)))_0$.
\end{enumerate}
Since we are assuming that $\{(\mathbf m_i^-,\mathbf m_i^+)\}$ has no convergent subsequence, we can exclude the case (a)-(1).

We first assume that the condition (a)-(2)  holds.
Let $\Psi \colon S \times (0,1) \to \hyperbolic^3/\psi(\pi_1(S))$ be an orientation-preserving homeomorphism inducing $\psi$ between the fundamental groups.
For brevity of description, we now assume that a new simply degenerate end is lower.
The case when the new simply degenerate end is upper can be dealt with in the same way.

Let $\gamma_i$ be a shortest pants decomposition of $(\Sigma_-, \mathbf m_i^-)$.
Let $\gamma_\infty$ be the Hausdorff limit of the $\gamma_i$ regarded as geodesic laminations.
Since we assumed that there is a new lower simply degenerate end,  by \cref{simply deg}, there is a minimal component $\ell$ of $\gamma_\infty$ which is the ending lamination of such an end.
Let $\Sigma$ be the minimal supporting surface of $\ell$.
Let $\lambda$ be a measured lamination supported on $\ell$.
Let $\delta >0$ be the positive constant given  in \cref{homotopic geodesic} for $\epsilon=1$.
Now, take an essential  simple closed curve $d$ on $\Sigma$ such that $\iota(\nu, d)<\delta$.
Since we have $\iota(\lambda , d) >0$ from the fact that $\Sigma=S(\lambda)$, and $\gamma_i$ converges to $\gamma_\infty$ containing $\ell=|\lambda|$ in the Hausdorff topology, the intersection number $\iota(\gamma_i, d)$ goes to $\infty$, which implies the length of $d$ with respect to $\mathbf m_i^-$ also goes to $\infty$, for $\gamma_i$ is a shortest pants decomposition.
On the other hand, $\{b\circ q(\mathbf m_i^-,\mathbf m_i^+)\}$ converges to $\nu$ by our assumption.
Then, by setting $\mu_\infty^1$ in the statement of \cref{homotopic geodesic} to be $\nu$ and $\mu^2_\infty$ to be empty,  it follows that the translation length of  $\phi_i(d)$ goes to $\infty$ as $i \longrightarrow \infty$, contradicting the assumption that $\{\phi_i\}$ converges.

We now assume the condition (a)-(3) that $\hyperbolic^3/\psi(\pi_1(S))$ has a new $\integers$-cusp, represented by a non-contractible, non-peripheral  simple closed curve $c$ on $\Sigma_-\sqcup \Sigma_+$, but does not have a new simply degenerate end.
In the same way as in the preceding paragraph, we can assume that $c$ lies on $\Sigma_-$.
Since we assumed that there is no new simply degenerate end, if the $\integers$-cusp represented by $\psi(c)$ touches a simply degenerate end, it corresponds to a simply degenerate end of $(\hyperbolic^3/\phi_i(\pi_1(S))_0$, which implies that $\phi_i(c)$ is parabolic, contradicting our assumption that $c$ is a new parabolic curve.
Therefore, the lower $\integers$-cusp represented by $c$ has geometrically finite ends on its both sides, which may coincide.
Therefore, the lower boundary of the convex core of $\hyperbolic^3/\psi(\pi_1(S))$ has bending angle $\pi$ along $c$.
Since we assumed that $\{\phi_i\}$ converges to $\psi$ strongly, the convex core of $\hyperbolic^3/\phi_i(\pi_1(S))$ converges to that of $\hyperbolic^3/\psi(\pi_1(S))$ (see \cite[Proof of Theorem 5]{OhC}), and hence the bending angle along $c \subset \Sigma_-$ of the convex core of $\hyperbolic^3/\phi_i(\pi_1(S))$ converges to $\pi$.
This means that $\{b\circ q(\mathbf m_i^-,\mathbf m_i^+)\}$ tends to a point in $D$, contradicting our assumption.
Thus,  in every sub-case of the case (a), we have obtained a contradiction.

We state what we have proved in the last paragraph as a lemma to refer to it in the following section.

\begin{lemma}
\label{new parabolic}
Suppose that $\{\phi_i\}$ converges to $\psi$ strongly, and that  $\hyperbolic^3/\psi(\pi_1(S))$ has a $\integers$-cusp whose core curve is a simple closed curve $c$ on a component $\Sigma$ of $\Sigma_- \sqcup \Sigma_+$.
Suppose furthermore that the $\integers$-cusp has geometrically finite ends on its both sides.
Then the bending angle on the boundary  of the convex core of $\hyperbolic^3/\phi_i(\pi_1(S))$ along $c$ converges to $\pi$.
\end{lemma}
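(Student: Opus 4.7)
My approach would follow closely the argument the authors have just carried out in sub-case (a)-(3) of Section 5.1, but isolating it from the particular contradiction derived there. It splits into two ingredients: convergence of convex cores coming from strong convergence, and a local model computation at the rank-1 cusp.

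First, since $\{\phi_i\}$ converges strongly to $\psi$, the convex cores $C(\hyperbolic^3/\phi_i(\pi_1(S)))$ converge in the Gromov--Hausdorff sense to $C(\hyperbolic^3/\psi(\pi_1(S)))$ via approximate isometries, as cited from \cite[Proof of Theorem 5]{OhC}. Under this convergence the pleated boundaries converge on compact subsets of the non-cuspidal part, and hence the restriction of the bending lamination to such compact subsets converges weakly. This lets me reduce the problem to understanding the bending of the limit at the new $\integers$-cusp.

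Second, I would identify this limit bending on $c$. By hypothesis the $\integers$-cusp with core curve $c$ is flanked by two geometrically finite ends (possibly the same), so a neighbourhood of the cusp in the limit convex core is the standard local model: two pleated half-annular pieces entering the cusp from either side and meeting along the parabolic loop. Passing to the universal cover, choose a horoball $B$ fixed by $\psi(c)$; the two pleated pieces of $\partial C(\hyperbolic^3/\psi(\pi_1(S)))$ accumulate onto the horocycle $\partial B/\langle \psi(c)\rangle$ from opposite horospherical directions. A direct inspection in the upper half-space model shows that the exterior dihedral angle between these two pieces along their common limiting horocycle is $\pi$; in other words, $\pi\cdot c$ appears as a component of the bending lamination of $\partial C(\hyperbolic^3/\psi(\pi_1(S)))$.

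Combining the two, the bending mass of $\partial C(\hyperbolic^3/\phi_i(\pi_1(S)))$ along the closed geodesic freely homotopic to $c$ must converge to the bending mass assigned by the limit lamination to $c$, which is $\pi$. The main subtlety I expect is the second step: to guarantee that the limit bending on $c$ is exactly $\pi$, one must rule out any escape or diffusion of bending mass into the cusp region. This is where the hypothesis that both sides of the cusp are geometrically finite enters crucially, since it forces the local geometry to be the standard rank-1 cusp model rather than the subtler situation near a simply degenerate end where bending may concentrate along the cusp in a more delicate way.
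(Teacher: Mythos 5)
Your proposal follows essentially the same two-step route as the paper: (1) strong convergence $\Longrightarrow$ geometric convergence of convex cores (citing the same reference), and (2) the claim that the geometrically finite limit picture forces the bending along $c$ to become $\pi$. The paper states step (2) in one sentence without elaboration, so your attempt to supply a local-model justification is in the right spirit.

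That said, the picture you give for step (2) is imprecise in a way worth flagging. You write that the two pleated pieces \emph{accumulate onto the horocycle} $\partial B/\langle \psi(c)\rangle$ and form an \emph{exterior dihedral angle} $\pi$ along \emph{a common limiting horocycle}. In fact, the two sheets of $\partial C(\hyperbolic^3/\psi(\pi_1(S)))$ do not meet at all in the limit manifold: each passes through every level horocircle and runs out to the parabolic fixed point, and the curve $c$ itself is a cusp rather than a leaf of the limit boundary. There is therefore no literal dihedral angle in the limit, and ``$\pi\cdot c$ appears as a component of the bending lamination of $\partial C(\hyperbolic^3/\psi(\pi_1(S)))$'' is not quite meaningful. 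The correct statement is about the degenerating approximants: the two boundary flanges of $\partial C(\hyperbolic^3/\phi_i(\pi_1(S)))$ that meet along the short geodesic $c_i^*$ become asymptotic as that geodesic pinches, because the two arcs of the limit set on either side of the developing rank-one neck close up; this forces the exterior angle $\theta_i$ along $c_i^*$ to tend to $\pi$, and hence the weight that the bending measures $\mu_i$ assign across a small transversal to $c$ tends to $\pi$. The ``convergence on compact subsets'' from step (1) does not by itself reach $c$ (since $c$ escapes into the thin/cuspidal part), so your reduction in paragraph one is not a literal reduction to the limit manifold; you correctly sense this in your closing remark about bending mass escaping into the cusp region, and that is exactly where the geometrically-finite-on-both-sides hypothesis does its work, just as in the paper. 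With these reformulations, your argument is the paper's argument.
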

\subsection{Case (b)}
\label{b}
By \cref{strong}, if $\{q(\mathbf m_i^-,\mathbf m_i^+)\}$ converges algebraically but not strongly, then the geometric limit either has a $\integers \times \integers$-cusp or a new  simply degenerate end.
We note that the following arguments only use the assumption that $\nu$ satisfies the condition (c) of \cref{main}.
Later in \S 6, we shall use the arguments again in the case when $\nu$ satisfies (c) but not (b) of \cref{main}.

By \cref{OS}, the geometric limit $M_\infty$ is topologically embedded in $S \times (0,1)$.
We regard $M_\infty$ as embedded in $S \times (0,1)$ from now on, and we call the direction of $S \times \{t\}$ {\em horizontal}.
By \cref{alg loc}, there is an algebraic locus $f_\infty \colon S \to M_\infty$ which can be lifted to an immersion into $\hyperbolic^3/\psi(\pi_1(S))$ inducing $\psi$ between the fundamental groups such that $f_\infty(S)$ is horizontal except for the part where it goes around a torus cusp.
We recall that the pull-back of $f_\infty$ to $\hyperbolic^3/\phi_i(\pi_1(S))$ by an approximate isometry between $\hyperbolic^3/\phi_i(\pi_1(S))$ and $M_\infty$ induces the isomorphism $\phi_i$ between the fundamental groups.

A geometrically infinite end or a torus cusp is situated either above $f_\infty(S)$ or below $f_\infty(S)$, except for the case of a torus cusp around which $f_\infty(S)$ goes.
When $f_\infty(S)$ goes around a torus cusp $T$, we regard $T$ as being situated both above and below $f_\infty(S)$.
We say that a simply degenerate end or a torus cusp is {\em nearest to} $f_\infty(S)$ when  pleated surfaces tending to the end can be homotoped into $f_\infty(S)$ in $M_\infty$ for a simply degenerate end, and when a longitude can be homotoped into $f_\infty(S)$ for a torus end.
Since a wild geometrically infinite end is an accumulation of simply degenerate ends, it cannot be nearest to $f_\infty(S)$.
\begin{claim}
\label{nearest}
Unless $\{\phi_i\}$ converges to $\psi$ which has no new simply degenerate ends, there is either a new simply degenerate end  or a torus cusp, which is nearest to $f_\infty(S)$.
If a simply degenerate end is nearest to $f_\infty(S)$, then it can be lifted to the algebraic limit.
\end{claim}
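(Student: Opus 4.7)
Since we are in Case (b), non-strong convergence combined with \cref{strong} implies that $M_\infty = \hyperbolic^3/\Gamma$ either contains a torus cusp or possesses a geometrically infinite end not homotopic into $f_\infty(S)$. By \cref{OS}-(d) the latter is either simply degenerate or an accumulation of simply degenerate ends and torus cusps, so in every non-strong case $M_\infty$ contains at least one new simply degenerate end or torus cusp. The plan is to use the embedding of $(\hyperbolic^3/\Gamma)_0$ into $S\times(0,1)$ from \cref{OS} to order such objects by vertical level and extract one closest to $f_\infty(S)$.

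First, by \cref{alg loc}, $f_\infty(S)$ sits at a fixed level $t_0 \in (0,1)$ except on the annular pieces that wrap torus cusps. Focusing on the region above $f_\infty(S)$ (the other side being symmetric), let $\ce$ denote the set of new simply degenerate ends and torus cusps of $M_\infty$ whose horizontal level (provided by \cref{OS}-(a)) lies in $[t_0,1)$, with the convention that a torus cusp wrapped by $f_\infty(S)$ has level $t_0$. Under the hypothesis of the claim we may assume $\ce$ is non-empty, and set $t^* = \inf\{\,\mathrm{level}(e) : e \in \ce\,\}$. The key step is to show that $t^*$ is attained by some element of $\ce$.

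The main obstacle is excluding the possibility that $t^*$ is only approached from above by a sequence in $\ce$. Suppose for contradiction there is $\{e_n\}\subset \ce$ with $\mathrm{level}(e_n) \searrow t^*$ while no element of $\ce$ has level $t^*$. Then the horizontal surfaces carrying the $e_n$ accumulate to a horizontal surface at level $t^*$, which by \cref{OS}-(d) must correspond to a wild geometrically infinite end $w$ of $M_\infty$. Since $f_\infty(S) \subset (\hyperbolic^3/\Gamma)_0$ lies below $w$, the end $w$ is approached from below in the embedding, so the simply degenerate ends and torus cusps accumulating at $w$ occupy a half-neighbourhood of $w$ on the $f_\infty(S)$-side. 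In particular some of them lie at levels strictly between $t_0$ and $t^*$, yielding elements of $\ce$ with level below $t^*$ and contradicting the definition of $t^*$. Hence $t^*$ is realised, producing a new simply degenerate end or torus cusp nearest to $f_\infty(S)$.

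For the second assertion, let the nearest object be a simply degenerate end $e$ at level $t^*$ with ending lamination supported on a subsurface $\Sigma$. The region $R \subset M_\infty$ between $f_\infty(S)$ and $e$ contains no simply degenerate end and no torus cusp by minimality of $t^*$, and no geometrically finite end by \cref{OS}-(b). Thus $R$ is topologically a product $\Sigma \times [t_0, t^*)$ up to the annuli where $f_\infty$ wraps torus cusps. Any sequence of pleated surfaces in $R$ tending to $e$ may therefore be homotoped into $f_\infty(S)$, and since $f_\infty$ lifts to an immersion in $\hyperbolic^3/\psi(\pi_1(S))$ by \cref{alg loc}, these pleated surfaces lift to pleated surfaces in the algebraic limit tending to a simply degenerate end of $\hyperbolic^3/\psi(\pi_1(S))$ with the same ending lamination, showing that $e$ is realised as an end of the algebraic limit.
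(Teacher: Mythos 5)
The overall architecture of your argument matches the paper's: use the embedding of $(\hyperbolic^3/\Gamma)_0$ into $S\times(0,1)$ from \cref{OS} to order new simply degenerate ends and torus cusps by vertical level, show that a closest one to $f_\infty(S)$ exists, and then observe that a nearest simply degenerate end is homotopic into the algebraic locus and hence lifts. The second half of your proof (homotoping pleated surfaces through the product region $R$ and invoking \cref{alg loc}) is a correct, if more verbose, version of the paper's one-sentence argument.

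The weak link is the step where you show $t^*$ is attained. You argue that the wild end $w$ at level $t^*$ is "approached from below" because $f_\infty(S)$ lies below $w$, and conclude that the simply degenerate ends or torus cusps of which $w$ is an accumulation sit at levels strictly between $t_0$ and $t^*$. That inference does not follow: $M_\infty$ is connected and has material both below level $t^*$ (near $f_\infty(S)$) \emph{and} above it (between consecutive $e_n$'s, whose levels $t_n\searrow t^*$). The $e_n$ themselves accumulate to $\Sigma_w\times\{t^*\}$ from the upper side, so the end that they limit onto is naturally approached from \emph{above}, in which case the accumulating ends are exactly the $e_n$, at levels $>t^*$, and no contradiction arises. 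What you actually need to analyse is the frontier, at level $\le t^*$, of the component of $M_\infty\cap(\Sigma_w\times(t_0,t^*))$ containing $f_\infty(\Sigma_w)$. That frontier is an end of $M_\infty$ distinct from $w$, it cannot be geometrically finite by \cref{OS}-(b) since its level lies in $(0,1)$, and so it is simply degenerate, a torus cusp, or wild with accumulating simply degenerate ends or torus cusps below $t^*$; in every case one obtains an element of $\ce$ at level $\le t^*$, which is what the contradiction requires. Without this step, the sentence "Since $f_\infty(S)\subset(\hyperbolic^3/\Gamma)_0$ lies below $w$, the end $w$ is approached from below" is a non-sequitur and leaves a genuine gap in the argument.
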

\begin{proof}
Since $M_\infty$ is embedded in $S \times (0,1)$,  a simply degenerate end or a longitude of a torus end can be vertically homotoped into $f_\infty(S)$ unless there is another simply degenerate end or a torus end which impedes this.
Since ends or torus cusps can accumulate only into a horizontal surface containing an end, there cannot be ends which accumulate into $f_\infty(S)$.
Therefore, there must be a simply degenerate end or a torus cusp which can be homotoped into $f_\infty(S)$ without being obstructed by other ends.
This shows the first statement.

If a simply degenerate end is nearest to $f_\infty(S)$, it is homotopic into $f_\infty(S)$, and hence can be lifted to the algebraic limit.
\end{proof}

\subsubsection{Nearest new simply degenerate end.}
\label{nearest sd end}
Suppose that there is a new simply degenerate end $E$ which is nearest to $f_\infty(S)$.
We assume that $E$ is situated above $f_\infty(S)$.
The case when $E$ is situated below $f_\infty(S)$ can be dealt with in the same way just by turning everything upside down.
The end $E$ has a neighbourhood of the form  $\Sigma \times (s,t)$ for an incompressible subsurface $\Sigma$ of $S$, where $\Sigma\times  \{t\}$ corresponds to the end $E$ (Figure \ref{fNearestSimplyDegenerateEnd}).
Let $\delta$ be the constant given in \cref{homotopic geodesic} for $\epsilon=1$.
Take a simple closed curve $d$ contained in $\Sigma$ satisfying $\iota(d, \nu)<\delta$.
Let $\lambda$ be a measured lamination on $\Sigma$ whose support is the ending lamination of $E$.
Take a shortest pants decomposition $C_i$ of $(\Sigma_+, \mathbf m_i^+)$.
Then by \cref{simply deg}, we see that $C_i|\Sigma$ converges in the Hausdorff topology to a geodesic lamination whose only minimal component is $|\lambda|$.
Since $\iota(d, \lambda)>0$, we have $\iota(C_i, d) \longrightarrow \infty$ and see that the length of $d$ with respect to $\mathbf m_i^+$ goes to $\infty$.
On the other hand, $\{\iota(d, b\circ q(\mathbf m_i^-,\mathbf m_i^+))\}$ is bounded since $b\circ q(\mathbf m_i^-,\mathbf m_i^+)$ converges to $\nu$.
We set  $\mu_\infty^1$ in the statement of \cref{homotopic geodesic}  to be this $\nu$ and $\mu_\infty^2$ to be empty.
Then we see that the translation length of $\phi_i(d)$ goes to $\infty$, contradicting our assumption that $\{\phi_i\}$ converges.


\begin{figure}
\begin{overpic}[scale=.15
] {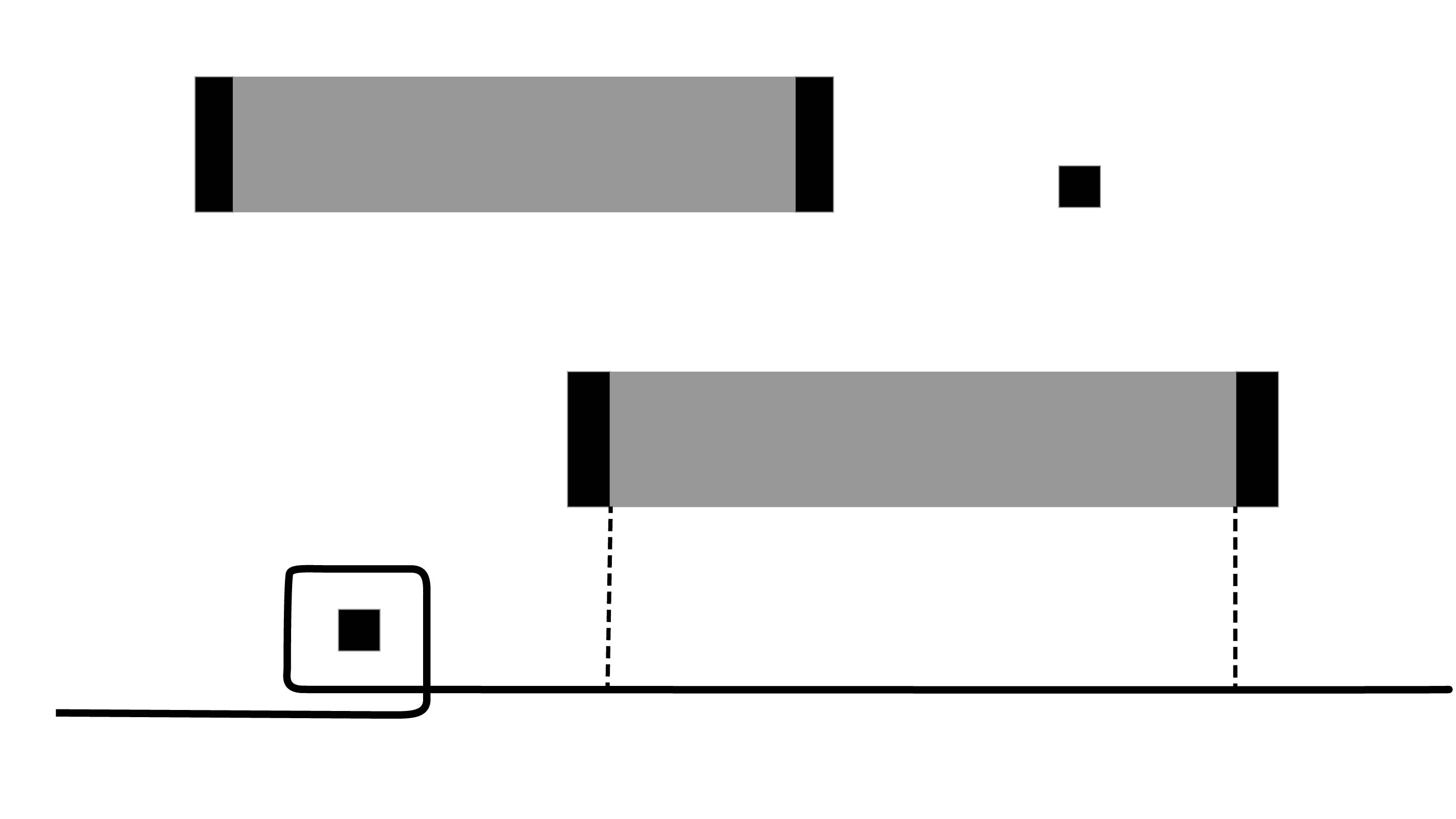} 
    \put(61 ,24 ){$E$}  
      \put( 0, 10){$f_\infty(S)$}  
      \end{overpic}
\caption{A nearest end above is a simply degenerate end}\label{fNearestSimplyDegenerateEnd}
\end{figure}

\subsubsection{Nearest torus cusp.}
\label{nearest torus}
Let $T$ be such a nearest torus cusp.
Again, the case when there is a nearest torus cusp $T$ below $f_\infty(S)$ can also be dealt with in the same way, by turning everything upside down.
Let $l$ be a simple closed curve on $\Sigma_+$ whose image under $f_\infty$ is homotopic to  a longitude of $T$.

Let $\delta>0$ be the constant given in \cref{homotopic geodesic} for $\epsilon=1$, and take a simple closed curve $d$ on $\Sigma_+$ with $\iota(d, l) >0$ and $\iota(d, \nu\setminus l) < \delta$. 
Since $l$ is homotopic to a longitude of a torus cusp in $M_\infty$,  by \cref{torus}-(1), the length of $l$ with respect to $\mathbf m_i^+$  is bounded from below by a positive constant.
Consider the shortest pants decomposition $P_i$ of $(\Sigma_+, \mathbf m_i^+)$, and extend it to a shortest marking $M_i$ of $\Sigma_+$.
Then by  \cref{torus}-(2),  if we choose a simple closed curve $k$ on $S$ intersecting $l$ essentially, we have $d_{A(l)}(M_i,k) \longrightarrow \infty$.
This means that unless $P_i$ contains $l$ for all $i$ after passing to a subsequence, there is a component $a_i$ of $P_i$ which spirals around $l$ more and more as $i \longrightarrow \infty$.
Then, since $\iota(P_i, d) \geq \iota(a_i, d)$ and the right hand side goes to $\infty$, the length of $d$ with respect to $\mathbf m_i^+$ goes to $\infty$.

In the case when $P_i$ contains $l$ for all  $i$, there is a curve $t_i$ in $M_i$ with $\iota(l,t_i)>0$ which is shortest among such curves.
Since we are assuming that the length of $l$ with respect to $\mathbf m_i^+$ is bounded away from $0$, the length of $t_i$ with respect to $\mathbf m_i^+$ is bounded above.
Then by \cref{torus}-(2), we see that $t_i$ spirals around $l$ more and more as $i \longrightarrow \infty$.
It follows that $d_{A(l)}(t_i, d)$ goes to $\infty$. 
Since $t_i$ has bounded length and the length of $l$ with respect $\mathbf m_i^+$ does not go to $0$, we see that the length of $d$ with respect to $\mathbf m_i^+$ goes to $\infty$ also in this case.

By the observation above,  the closed geodesic representing $d$ in $(\Sigma_+, \mathbf m_i^+)$ spirals around $l$ more and more  as $i \longrightarrow \infty$.
This implies that $\angle_{(\Sigma_+, \mathbf m_i^+)}(d, l)$ goes to $0$.
We set $\mu_\infty^1$ to be $\nu \setminus l$ and $\mu_\infty^2$ to be $l$ with a positive weight given by $\nu$ if $l$ is contained in $|\nu|$, and $\mu_\infty^1$ to be $\nu$ and $\mu_\infty^2$ to be empty otherwise.
Then we  apply \cref{homotopic geodesic}, and see that the translation length of $\phi_i(d)$ goes to $\infty$, contradicting our assumption that $\{\phi_i\}$ converges.

\subsection{Case (c)}
\label{c}
Suppose that $\{q(\mathbf m_i^-, \mathbf m_i^+)\}$ does not converge algebraically.
By considering efficient pleated surfaces as in Thurston \cite{Th2}, the following can be proved.
See Ohshika \cite[Theorem 3.1]{OhJ} for a complete proof.
\begin{lemma}
\label{vertical}
There is a vertical codimension-1 lamination $L$ properly embedded in $S \times [0,1]$, which is disjoint from both $\lambda_-$ and $\lambda_+$, such that for any sequence of weighted simple closed curves $s_i \gamma_i$ on $S$ converging to a measured lamination $\mu$, if $\iota(L,\mu)>0$, then $s_i \len(\phi_i(\gamma_i)) \longrightarrow \infty$, where $\len(\phi_i(\gamma_i))$ denotes the  length of the closed geodesic representing $\phi_i(\gamma_i)$ in $\hyperbolic^3/\phi_i(\pi_1(S))$.
Indeed, there is a pleated surface called an \lq efficient' pleated surface in $\hyperbolic^3/\phi_i(\pi_1(S))$ on which the growth of the length of every measured lamination is comparable to that in $\hyperbolic^3/\phi_i(\pi_1(S))$.
\end{lemma}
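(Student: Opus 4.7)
My plan is to apply Thurston's efficient pleated surface construction adapted to the qi-end invariants $\lambda_\pm$, and then extract a projective limit of the induced hyperbolic structures on $S$ via Thurston's compactification of Teichm\"uller space.

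\emph{Step 1 (efficient pleated surfaces).} For each $i$, I would construct a pleated surface $f_i\colon (S,m_i)\to M_i=\hyperbolic^3/\phi_i(\pi_1(S))$ that realises a maximal geodesic lamination $\Lambda_i\supset \lambda_-\cup\lambda_+$. On the side of each geometrically finite end I would take $\Lambda_i$ to extend a shortest pants decomposition of the conformal structure at infinity, while on the side of each simply degenerate end the ending lamination is automatically included; in either case $\lambda_\pm$ is realised. Following Thurston \cite{Th2} (as paraphrased in Ohshika \cite{OhJ}), such a pleated surface can be arranged to be \emph{efficient}, meaning that there is a universal constant $K>0$ such that for every measured lamination $\beta$ on $S$,
\[
\ell_{m_i}(\beta)\;\le\; K\bigl(\len_{M_i}(\phi_i(\beta))+\iota(\beta,\lambda_-\cup\lambda_+)\bigr).
\]
In particular, $\ell_{m_i}(\lambda_\pm)$ stays bounded, since $\iota(\lambda_\pm,\lambda_-\cup\lambda_+)=0$ and the pleated realisation of $\lambda_\pm$ keeps its length bounded.

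\emph{Step 2 (projective limit).} Since $\{\phi_i\}$ does not converge in $\AH(S)$, the induced metrics $\{m_i\}$ cannot lie in a compact subset of $\teich(S)$: otherwise the efficiency inequality would force $\len_{M_i}(\phi_i(\gamma))$ to be uniformly bounded on compact sets of $\ml(S)$, and algebraic convergence would follow after subsequencing. Hence, by passing to a subsequence and applying Thurston's compactification, there exist positive scalars $t_i\to 0$ and a nonzero measured lamination $\mu_\infty\in\ml(S)$ such that
\[
t_i\,\ell_{m_i}(\alpha_i)\;\longrightarrow\;\iota(\mu_\infty,\alpha)
\]
uniformly on compact subsets of $\ml(S)$ (in particular for every convergent sequence $\alpha_i\to\alpha$). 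I then set $L:=|\mu_\infty|\times[0,1]$, the vertical codimension-$1$ lamination in $S\times[0,1]$ with horizontal cross-section the support of $\mu_\infty$.

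\emph{Step 3 (disjointness from $\lambda_\pm$).} Applying the displayed convergence to $\alpha_i=\lambda_\pm$ (viewed with any fixed transverse measure) gives $\iota(\mu_\infty,\lambda_\pm)=\lim t_i\ell_{m_i}(\lambda_\pm)=0$ by boundedness of $\ell_{m_i}(\lambda_\pm)$ and $t_i\to 0$. Standard facts about measured laminations then imply that $|\mu_\infty|$ and $\lambda_\pm$ share no common minimal component and intersect transversely in measure zero, so after straightening we may take them to be disjoint in $S$; thus $L$ is disjoint from $\lambda_\pm\times[0,1]$.

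\emph{Step 4 (length estimate).} Suppose $s_i\gamma_i\to\mu$ in $\ml(S)$ with $\iota(L,\mu)=\iota(\mu_\infty,\mu)>0$. By the uniform convergence of Step 2 applied to $\alpha_i=s_i\gamma_i\to\mu$,
\[
t_i\,s_i\,\ell_{m_i}(\gamma_i)\;=\;t_i\,\ell_{m_i}(s_i\gamma_i)\;\longrightarrow\;\iota(\mu_\infty,\mu)\;>\;0,
\]
and since $t_i\to 0$ this forces $s_i\ell_{m_i}(\gamma_i)\to\infty$. On the other hand, the efficiency inequality applied to $\beta=s_i\gamma_i$ gives
\[
s_i\ell_{m_i}(\gamma_i)\;\le\;K\,s_i\len_{M_i}(\phi_i(\gamma_i))+K\,s_i\,\iota(\gamma_i,\lambda_-\cup\lambda_+),
\]
and the second term converges to $K\,\iota(\mu,\lambda_-\cup\lambda_+)<\infty$. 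Consequently $s_i\len_{M_i}(\phi_i(\gamma_i))\to\infty$, which is the desired conclusion.

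\emph{Main obstacle.} The crux of the proof is Step 1, i.e.\ producing pleated surfaces that simultaneously (a) realise $\lambda_-\cup\lambda_+$ with bounded length and (b) satisfy the efficiency inequality with a constant independent of $i$. In the geometrically infinite setting one cannot simply appeal to the classical construction of Thurston in \cite{Th2}, and one must instead use the generalisation carried out in Ohshika \cite[Theorem 3.1]{OhJ}, where a careful choice of maximal lamination extending $\lambda_\pm$ and a uniform homotopy argument furnish the required pleated surfaces. Once this is in place, Steps 2--4 are essentially a routine application of Thurston's compactification together with the efficiency bound.
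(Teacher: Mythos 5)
The overall architecture of your argument — construct pleated surfaces $f_i\colon(S,m_i)\to M_i$, take a projective Thurston limit of $\{m_i\}$ to produce $L$, and transfer the length blow-up from $m_i$ to $M_i$ via an efficiency inequality — is the right structure and matches what the paper (outsourcing to Ohshika's Theorem~3.1 in [OhJ]) has in mind. Steps 2--4 would go through essentially as you state, provided the efficiency inequality holds and $\ell_{m_i}(\lambda_\pm)$ is uniformly bounded (for the latter, your justification should rest on $\len_{M_i}(\lambda_\pm)=0$, not on ``the pleated realisation of $\lambda_\pm$ keeps its length bounded'').

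However, Step 1 contains a gap that goes deeper than the uniformity issue you flag as the ``main obstacle.'' You propose a pleated surface $f_i$ realising a maximal lamination $\Lambda_i\supset\lambda_-\cup\lambda_+$. This cannot be done. First, $\lambda_-$ and $\lambda_+$ may intersect transversely (the hypotheses only forbid shared minimal components, not transverse intersection), so $\lambda_-\cup\lambda_+$ is not a geodesic lamination at all and cannot be a sublamination of anything. Second, and more fundamentally, since each $\phi_i\in\QH_{\lambda_-,\lambda_+}$, the minimal components of $\lambda_\pm$ of non-annular type are exactly the ending laminations of $M_i$, which are \emph{unrealisable} in $M_i$ — a pleated surface whose pleating locus contains an ending lamination of $M_i$ does not exist. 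So the pleating locus of the efficient surface cannot be taken to contain $\lambda_\pm$; the construction in [OhJ] must instead choose, for each $i$, a realisable maximal lamination (e.g.\ one spiralling onto a curve of uniformly bounded length in $M_i$) and derive efficiency relative to that, with the disjointness $\iota(L,\lambda_\pm)=0$ then coming indirectly (from $\len_{M_i}(\lambda_\pm)=0$ plus efficiency) rather than from $\lambda_\pm$ lying in the pleating locus. Relatedly, the specific form of the inequality $\ell_{m_i}(\beta)\le K(\len_{M_i}(\phi_i(\beta))+\iota(\beta,\lambda_-\cup\lambda_+))$ is asserted, not derived: Thurston's efficiency estimate controls $\ell_{m_i}(\beta)-\len_{M_i}(\beta)$ by an alternation-type quantity relative to the \emph{actual} pleating locus $\Lambda_i$, and obtaining the clean additive term $\iota(\beta,\lambda_-\cup\lambda_+)$ with a uniform $K$ requires exactly the careful choice of $\Lambda_i$ that you are not permitted to make by taking $\Lambda_i\supset\lambda_-\cup\lambda_+$.
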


By the condition (c) of \cref{main}, which $\nu$ must satisfy, there is a component $\nu_0$ of $\nu$ intersecting a component $L_0$ of $L$ essentially.
Let $\Sigma$ be a component of $\Sigma_- \sqcup \Sigma_+$ containing $\nu_0$, and take  a shortest pants decomposition $P_i$ of $(\Sigma,(\mathbf m_i^-, \mathbf m_i^+)|\Sigma)$.
Since $P_i$ is a shortest pants decomposition, by Sullivan's lemma or \cite[Proposition 2.1]{OhI}, $\{\len(\phi_i(P_i))\}$ is bounded.
By \cref{vertical} we can  apply the proof of  \cite[Lemma 5.7]{OhT} using  the length in $\hyperbolic^3/\phi_i(\pi_1(S))$ instead of $(\mathbf m_i^-, \mathbf m_i^+)|\Sigma$, we see that the Hausdorff limit of $P_i$ contains $L_0 \cap \Sigma$.
This implies in turn that there is a sequence of positive numbers $r_i$ tending to $0$ and a component $P_i'$ of $P_i$ such that $r_i P'_i$ converges to a measured lamination containing $L_0 \cap \Sigma$ on $(\Sigma,(\mathbf m_i^-, \mathbf m_i^+)|\Sigma)$ unless $L_0 \cap \Sigma$ coincides with $P_i'$ for every sufficiently large $i$.
Even in the latter case, unless the length of $L_0 \cap \Sigma$ with respect to $(\mathbf m_i^-, \mathbf m_i^+)|\Sigma$ goes to $0$, by letting $P_i''$ be the shortest simple closed curve transverse to $P_i'$, we can find $s_i$ going to $0$ such that $s_i P_i'' \longrightarrow L_0 \cap \Sigma$.
Since the length of $P_i''$ with respect to $(\mathbf m_i^-, \mathbf m_i^+)|\Sigma$ is bounded, abusing the symbols, we denote $s_i P_i''$ also by $r_i P_i'$ in this case.
The properties which we shall use below are that $r_i \longrightarrow 0$ and that $\{\len_{(\mathbf m_i^-,\mathbf m_i^+)|\Sigma}(P_i)\}$ is bounded.

If the length of $L_0 \cap \Sigma$ with respect to $(\mathbf m_i^-, \mathbf m_i^+)|\Sigma$ goes to $0$, the efficient pleated surface is  pinched  along  $L_0\cap \Sigma$.
This implies that $\hyperbolic^3/\phi_i(\pi_1(S))$ is also pinched along $L_0$,
and hence $(\Sigma, (\mathbf m_i^-, \mathbf m_i^+)|\Sigma)$ is also pinched along $L_0\cap \Sigma$.
Since $\nu_0$ intersects $L_0$, this contradicts \cref{shortest int}.

It remains to consider the case when $\{r_i P_i\}$ converges to $L_0 \cap \Sigma$ with $r_i \longrightarrow 0$.
Since $\iota(\nu_0, L_0) >0$ and $r_i$ goes to $0$, we see that $\iota(P_i', \nu_0) \longrightarrow \infty$.
This contradicts \cref{shortest int}, for $\{\len_{(\mathbf m_i^-,\mathbf m_i^+)|\Sigma}(P_i)\}$ is bounded.
Thus we have completed the proof of the case (c).

We note that we did not use the assumption (b) of \cref{main}.
We state what we have proved in the case (c) as a lemme for later use.

\begin{lemma}
\label{must converge}
Suppose that $\{b \circ q(\mathbf m_i^-,\mathbf m_i^+)\}$ converges to a measured lamination $\nu$ satisfying the assumption  (c) of \cref{main}.
Then $\{q(\mathbf m_i^-,\mathbf m_i^+)\}$ converges in $\AH(S)$.
\end{lemma}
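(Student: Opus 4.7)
The plan is to prove the contrapositive: assuming $\{q(\mathbf m_i^-,\mathbf m_i^+)\}$ has no algebraically convergent subsequence, I will derive a contradiction with the assumption that $\nu$ satisfies (c) of \cref{main}. This is exactly the content of case (c) above, so the proof will essentially consist of isolating that argument as a standalone statement.

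First, I would apply \cref{vertical} to the divergent sequence $\{\phi_i\}$ to extract a vertical codimension-1 lamination $L$ in $S \times [0,1]$ which is disjoint from $\lambda_-\sqcup \lambda_+$ and which witnesses the length blow-up via efficient pleated surfaces. Assumption (c) of \cref{main} says that $\lambda_-\sqcup \mu_-$ and $\lambda_+ \sqcup \mu_+$ fill up $S$, and since $\nu$ is the limit of bending laminations (which are disjoint from the $\lambda_\pm$), the relevant filling condition forces at least one component $\nu_0$ of $\nu$ to intersect some component $L_0$ of $L$ transversely. Let $\Sigma$ be the component of $\Sigma_-\sqcup \Sigma_+$ on which $\nu_0$ lives.

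Next, I would take a shortest pants decomposition $P_i$ of $(\Sigma,(\mathbf m_i^-,\mathbf m_i^+)|\Sigma)$. Sullivan's lemma (via \cite[Prop.~2.1]{OhI}) gives boundedness of $\{\len(\phi_i(P_i))\}$ in $\hyperbolic^3/\phi_i(\pi_1(S))$. Then, substituting hyperbolic length in $\hyperbolic^3/\phi_i(\pi_1(S))$ for length in $(\Sigma,(\mathbf m_i^-,\mathbf m_i^+)|\Sigma)$, the argument of \cite[Lemma~5.7]{OhT} combined with \cref{vertical} shows that the Hausdorff limit of $P_i$ must contain $L_0 \cap \Sigma$. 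From here two subcases arise: either (i) some component $P_i'$ of $P_i$ can be rescaled by $r_i \to 0$ to approximate $L_0 \cap \Sigma$ as a measured lamination (possibly after replacing $P_i'$ with a shortest transversal of bounded length), in which case $\iota(P_i',\nu_0)\to\infty$ because $\iota(\nu_0,L_0)>0$; or (ii) $L_0\cap \Sigma$ is actually pinched, so the efficient pleated surface, and hence $\hyperbolic^3/\phi_i(\pi_1(S))$ and ultimately $(\mathbf m_i^-,\mathbf m_i^+)|\Sigma$, gets pinched along $L_0\cap \Sigma$, again forcing $\iota(P_i,\nu_0)\to\infty$. Either way I get a direct contradiction with \cref{shortest int}, completing the proof.

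The main technical obstacle will be carefully executing the substitution of lengths in the argument of \cite[Lemma~5.7]{OhT}: the original argument there is phrased for lengths measured on the boundary of the convex core, whereas here \cref{vertical} only directly controls lengths in $\hyperbolic^3/\phi_i(\pi_1(S))$ via efficient pleated surfaces. Reconciling these two length notions, and handling the degenerate subcase where $L_0\cap\Sigma$ coincides with components of $P_i$ rather than arising as a rescaled limit, are the subtleties that require the most care. The dichotomy (i)/(ii) and its handling are exactly what was carried out in case (c); formally extracting them as a lemma is then routine.
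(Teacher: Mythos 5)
Your proposal is correct and follows essentially the same route as the paper: the paper literally defines this lemma as a restatement of what was proved in Case~(c) of \S5, and your step-by-step reconstruction (invoking \cref{vertical} for the vertical lamination $L$ disjoint from $\lambda_\pm$, using condition (c) to force $\iota(\nu_0,L_0)>0$, using \cref{shortest int} together with the dichotomy between the rescaled-pants-curve subcase and the pinching subcase) matches the paper's argument line by line. One small caution: strictly speaking, ruling out divergent subsequences only gives subsequential convergence rather than convergence of the full sequence, which is a mild imprecision already present in the paper's statement and is harmless in every subsequent application.
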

%
%

\section{Homotopy to a degree-1 map}
\label{degree 1}
In this section, we shall prove that $b \circ q$ is a degree-1 map to $\ml(\Sigma_-) \times \ml(\Sigma_+) \setminus D$ by constructing a homotopy in the one-point compactification of $\ml(\Sigma_-)\times \ml(\Sigma_+) \setminus D$ between the map induced from $b \circ q$ and a degree-1 map.
First, we shall define compactification where a homotopy will take place.

\begin{definition}
\label{compactification}
We let $\ct$ be the one-point compactification of $\teich(\Sigma_-) \times \teich(\Sigma_+)$, and $\cml$ the one-point compactification of $\ml(\Sigma_-) \times \ml(\Sigma_+) \setminus D$.
Since $b\circ q$ is proper, it induces a continuous map $\cb \colon \ct \to \cml$.
\end{definition}
The following is immediate from the definition of one-point compactification.
\begin{lemma}
The map $b \circ q$ has degree 1 if and only if $\cb$ has degree 1.
\end{lemma}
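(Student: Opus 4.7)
The plan is to reduce the lemma to the single identity $\deg(b\circ q)=\deg(\cb)$, after which the \emph{if and only if} follows immediately. This identity is an instance of the general fact that the degree of a proper map between oriented topological manifolds of the same dimension is preserved under one-point compactification, so the main work is to set up the situation so that this principle applies.

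First I would observe that $\teich(\Sigma_-)\times \teich(\Sigma_+)$ is an oriented manifold homeomorphic to a Euclidean space, and that $\ml(\Sigma_-)\times \ml(\Sigma_+)\setminus D$ is an open subset of a Euclidean space via Thurston's parametrisation of $\ml$ (noting that $D$ is closed, since its defining conditions---a compact leaf of weight at least $\pi$ or failure to fill---are closed). Both spaces are therefore oriented topological manifolds of the same dimension $N$, so $\deg(b\circ q)$ is well defined in the usual sense.

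Next I would compute both degrees from local degrees at a common regular value. By Sard's theorem, pick a regular value $\mathbf m \in \ml(\Sigma_-)\times \ml(\Sigma_+)\setminus D$ of $b\circ q$. Properness of $b\circ q$ (proved in Section~\ref{properness}) gives that $(b\circ q)^{-1}(\mathbf m)$ is compact and discrete, hence a finite set $\{x_1,\dots,x_k\}$, and $\deg(b\circ q)$ is the signed sum of local degrees at the $x_j$. Since $\mathbf m$ is not the added point at infinity of $\cml$, we have $\cb^{-1}(\mathbf m)=(b\circ q)^{-1}(\mathbf m)$, and near each $x_j$ the map $\cb$ coincides with $b\circ q$ with the same orientation data; hence the local-degree sum for $\cb$ equals the one for $b\circ q$.

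The only mildly delicate point is the meaning of $\deg(\cb)$, since $\cml$ is not a manifold at its point at infinity. This is handled either by defining $\deg(\cb)$ directly via the local-degree sum at a regular value lying off the point at infinity (which is exactly the computation above), or equivalently via the induced homomorphism $\cb_*\colon \tilde H_N(\ct)\to \tilde H_N(\cml)$ together with the standard isomorphism $\tilde H_N(\hat X)\cong H_N^{\mathrm{BM}}(X)$ for locally compact Hausdorff $X$, which identifies the two notions of degree. I do not anticipate any serious obstacle; this is a short bookkeeping step, whose point is to legitimise proving the degree statement of \cref{proper degree 1} by working in the compact spaces $\ct$ and $\cml$, where proper homotopies of $b\circ q$ become genuine homotopies of $\cb$.
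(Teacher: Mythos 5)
The paper offers no proof of this lemma; it asserts that the statement is ``immediate from the definition of one-point compactification,'' so there is nothing to compare your argument against except the standard fact it invokes. Your proposal correctly identifies that fact: the degree of a proper map between (open subsets of) oriented $N$-manifolds is computed by $\tilde H_N(\widehat X)\cong H_N^{\mathrm{BM}}(X)$, and under this identification $\cb_*$ on reduced homology of the one-point compactifications corresponds to $(b\circ q)_*$ on Borel--Moore homology, so the two degrees coincide. That part of your argument is exactly the right way to make the paper's one-line claim precise.

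The paragraph that appeals to Sard's theorem, however, is not justified in this setting and should be removed rather than offered as a first route. The parametrisation $q$ is real-analytic, but the bending map $b$ is only known to be continuous (the paper itself cites Keen--Series for continuity of $b$, and there is no smoothness result being used), so $b\circ q$ is not known to have a dense set of regular values in the sense of Sard, and ``regular value'' has no meaning for a merely continuous map. This is harmless only because you also give the homological argument, which is self-contained and does not need regular values: $\ct$ is a sphere, $\cml$ is the one-point compactification of the open oriented $N$-manifold $\ml(\Sigma_-)\times\ml(\Sigma_+)\setminus D$, and the naturality of $\tilde H_N(\widehat X)\cong H_N^{\mathrm{BM}}(X)$ with respect to proper maps gives $\deg(\cb)=\deg(b\circ q)$ directly. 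One small point you implicitly use and should state: $D$ is closed in $\ml(\Sigma_-)\times\ml(\Sigma_+)$ (so that the complement is locally compact and the one-point compactification makes sense) and the complement is connected (so that $H_N^{\mathrm{BM}}\cong\integers$ and ``degree'' is a single integer); both are needed for the statement to parse.
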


Therefore, we have only to show that $\cb$ has degree $1$.
For that, we shall construct a homotopy  in an open set from $\cb$ to a locally degree-1 map.
To construct the latter map, we shall make use of the following homeomorphism derived from the earthquake introduced by Thurston (see Thurston \cite{ThE} and Kerckhoff \cite{Ke}).

\begin{definition}
\label{standard homeo}
Fix $g_- \in \teich(\Sigma_-)$ and $g_+ \in \teich(\Sigma_+)$.
For $j=-,+$, we let $E_j \colon \ml(\Sigma_j) \to \teich(\Sigma_j)$ be the left earthquake map, that is, a homeomorphism sending a measured lamination $\lambda \in \ml(\Sigma_j)$ to the marked hyperbolic structure obtained  by the left earthquake along $\lambda$ on $(\Sigma_j, g_j)$.
Then, we have a homeomorphism $E_- \times E_+ \colon \ml(\Sigma_-) \times \ml(\Sigma_+) \to \teich(\Sigma_-) \times \teich(\Sigma_+)$.
We define $\ce \colon \teich(\Sigma_-) \times \teich(\Sigma_+) \to \ml(\Sigma_-) \times \ml(\Sigma_+)$ to be the inverse of $E_- \times E_+$.
\end{definition}

To construct a homotopy, we shall first define its support, which will be done by using an open neighbourhood of a point contained in the \lq corner' of the product of the Thurston compactifications of  the components of $\teich(\Sigma_-) \times \teich(\Sigma_+)$.
We now describe it more concretely.
Let $\Sigma_1, \dots, \Sigma_n$ be the components of $\Sigma_- \sqcup \Sigma_+$ that are not thrice-punctured spheres.
We compactify each $\teich(\Sigma_j)$ by attaching $\pl(\Sigma_j)$ as its boundary.
We call their product $\prod_{j=1}^n (\teich(\Sigma_j) \cup \pl(\Sigma_j))$ the {\em Thurston compactification product} of $\teich(\Sigma_-) \times \teich(\Sigma_+)$ and denote its boundary by $\pl$.
A point of $\pl$ has a form $(x_j)_{j =1}^n$, where $x_j$ is either $\teich(\Sigma_j)$ or $\pl(\Sigma_j)$ and at least one of the $x_j$ is contained in the boundary $\pl(\Sigma_j)$.
We put the product topology on the compactification.
We call the subset $\prod_{j=n}^n \pl(\Sigma_j)$ of the boundary the {\em corner} and denote it by $\plc$.
A sequence $\{\mathbf m_i\}$ of $\teich(\Sigma_-) \times \teich(\Sigma_+)$ converges to a point in $\plc$ after passing to a subsequence if $\{\mathbf m_i|\Sigma_j\}$ diverges for every component $\Sigma_j$ of $\Sigma_- \sqcup \Sigma_+$ that is not a thrice-punctured sphere.

\begin{definition}
\label{Lambda}
Let $\Lambda$ be a point in $\ml(\Sigma_-\sqcup\Sigma_+)$  not contained in $D$ such that, for each component $\Sigma_j$ of $\Sigma_- \sqcup \Sigma_+$ that is not thrice-punctured sphere, the restriction $\Lambda \cap \Sigma_j$ is an arational uniquely ergodic measured lamination.
By setting its $j$-th coordinate to be $[\Lambda|\Sigma_j]$, we define $[\Lambda] \in \plc$.
\end{definition}
We note that by the arationality, the condition that $\Lambda$ is not contained in $D$ is equivalent to, when $\lambda_-=\lambda_+=\emptyset$,  the condition that no two components of $\Lambda$ are homotopic, and otherwise, the condition that no component of the support of $\Lambda$ is homotopic  to an ending lamination.

\begin{definition}
\label{train track}
Let $\tau$ be a bi-recurrent train track on $\Sigma_- \sqcup \Sigma_+$ carrying $\Lambda$ by a weight system $\omega$ in such a way that $\omega$ takes a positive value on every branch of $\tau$.
We call an arc connecting two measured laminations $\mu_1, \mu_2$ carried by $\tau$ a {\em segment} when it is a linear path with regard to the weight system.
We note that this notion is independent of the choice of $\tau$ since the transition function between two weight systems is linear.

For two measured laminations $\lambda_1, \lambda_2$ carried by $\tau$ with weight systems $\omega_1, \omega_2$ respectively, we define $d_\tau(\lambda_1, \lambda_2)$ to be the sum of the differences of the weights of $\omega_1$ and $\omega_2$  on the branches of $\tau$.
\end{definition}
From now on until the end of this section, we fix a train track $\tau$ as above.

In the same way as we did for $\teich(\Sigma_-) \times \teich(\Sigma_+)$, for each component $\Sigma_j\ (j=1,\dots , n)$ of $\Sigma_- \sqcup \Sigma_+$, we consider the ray compactification of $\ml(\Sigma_j)$ and regard $\pl(\Sigma_j)$ as its boundary at infinity, and define the {\em ray compactification} of $\ml(\Sigma_-) \times \ml(\Sigma_+)$ with  boundary at infinity $\pl$ to be the product of the ray compactification of the components $\ml(\Sigma_1), \dots, \ml(\Sigma_n)$.
As before, we call $\plc$ the corner also in this ray compactification, and we see that a sequence $\{\lambda_i\}$ in  $\ml(\Sigma_-) \times \ml(\Sigma_+)$ converges to a point in $\pl$ after passing to a subsequence if $\{\lambda_i|\Sigma_j\}$ diverges for every $j=1, \dots, n$.

\begin{definition}
We call a subset $U$ of $\ml(\Sigma_-) \times \ml(\Sigma_+)$ a {\em  truncated cone} if  $U$ consists of all measured laminations $\lambda$ that satisfy the following conditions.
\begin{enumerate}[(a)]
\item
The train track $\tau$ carries $\lambda$.
\item
Each weight of $w(\lambda)$ is greater than a fixed positive constant $K$.
(Recall from \cref{train track} that we denote the weight system on $\tau$ corresponding to $\lambda$ by $w(\lambda)$.)
\end{enumerate}
We say that a truncated cone $U$  is a {\em truncated cone neighbourhood}
of $[\Lambda]$ (supported on $\tau$) when there is a neighbourhood $V$ of $[\Lambda]$ in the boundary at infinity $\pl$ such that the closure of $U$ in the ray compactification $(\ml(\Sigma_-) \times \ml(\Sigma_+))$ coincides with $V$.

The positive weight systems on $\tau$ form an open set in $\ml(\Sigma_-) \times \ml(\Sigma_+)$ since $\tau$ is bi-recurrent, and its  ray compactification contains $[\Lambda]$ since each $\Lambda|\Sigma_j$ is arational and uniquely ergodic.
Therefore we see that (the ray compactifications of) the truncated cone neighbourhoods form a basis of neighbourhoods of $[\Lambda]$ in the ray compactification $(\ml(\Sigma_-)\times \ml(\Sigma_+)) \cup \pl$.\end{definition} 

The following is a well-known property of the earthquake map. 
(See Papadopoulos \cite{Papad} for instance.)
\begin{lemma}
\label{E at infinity}
The map $\mathcal E \colon \teich(\Sigma_-) \times \teich(\Sigma_+) \to \ml(\Sigma_-) \times \ml(\Sigma_+)$ is a homeomorphism extending continuously to the identity on the boundary $\pl$.
\end{lemma}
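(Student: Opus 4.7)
The plan is to prove the two assertions --- that $\mathcal{E}$ is a homeomorphism and that it extends continuously to the identity on the boundary $\pl$ --- componentwise, reducing to the corresponding statements for a single earthquake map $E_j \colon \ml(\Sigma_j) \to \teich(\Sigma_j)$ on each non-thrice-punctured-sphere component $\Sigma_j$ of $\Sigma_-\sqcup\Sigma_+$. For the homeomorphism part, I would invoke Thurston's earthquake theorem in the form proved by Kerckhoff \cite{Ke}: any two marked hyperbolic structures on $\Sigma_j$ are connected by a unique left earthquake, and the construction depends continuously on both the shearing measured lamination and the time parameter. Hence each $E_j$ is a homeomorphism; the product $E_-\times E_+$ is a homeomorphism; and $\mathcal{E}$ is its inverse.

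For the boundary extension, since both the Thurston compactification product and the ray compactification carry the product topology on their common boundary $\pl$, it suffices to show, for each $j$, that $E_j$ extends to the identity map on $\pl(\Sigma_j)$. The key input is the classical length-asymptotic: for any simple closed curve $c$ on $\Sigma_j$ and any measured lamination $\lambda$,
\[
\ell_{E_j(t\lambda)}(c) = t\cdot\iota(c,\lambda) + O(1) \quad \text{as } t\to\infty,
\]
with error bounded in terms of $c$, $\lambda$ and the basepoint $g_j$, because a left earthquake of time $t$ along $\lambda$ shears the base surface by amount $t$ along the leaves of $\lambda$, and a transversal curve thus picks up $t\cdot\iota(c,\lambda)$ of extra length up to a correction depending only on the intrinsic geometry of $c$ in $g_j$. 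Applying this to a sequence $\mu_i = t_i\lambda_i$ with $t_i\to\infty$ and $\lambda_i\to\lambda$ in $\ml(\Sigma_j)$, dividing $\ell_{E_j(\mu_i)}(c)$ by $t_i$ and passing to the limit yields $\ell_{E_j(\mu_i)}(c)/t_i \to \iota(c,\lambda)$ for every simple closed curve $c$. By the definition of convergence in Thurston's compactification this means $E_j(\mu_i)\to[\lambda]$, which is precisely the assertion that $E_j$ extends to the identity on $\pl(\Sigma_j)$.

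The main anticipated obstacle is ensuring uniformity of the $O(1)$ error as the direction $\lambda_i$ and scale $t_i$ vary simultaneously rather than along a fixed earthquake ray. This uniform version of the length-asymptotic is recorded in the systematic treatment of earthquakes in Papadopoulos \cite{Papad}, which is the source cited in the lemma statement; combining the resulting componentwise extensions via the product topology then produces the required continuous extension of $\mathcal{E}$ to the identity on all of $\pl$.
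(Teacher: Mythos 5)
The paper gives no proof of this lemma at all: it is asserted as a ``well-known property of the earthquake map'' with a citation to Papadopoulos, so there is nothing to compare your argument against line by line. Your proposal reconstructs exactly the standard argument one would expect, and it is essentially the argument of Papadopoulos's paper: reduce componentwise via the product topology, get the homeomorphism on the interior from Kerckhoff's form of Thurston's earthquake theorem (a continuous bijection between spaces homeomorphic to Euclidean space, hence a homeomorphism by invariance of domain), and get the boundary extension from the length-growth asymptotics along earthquake rays. That is the right route.

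One small caution: your statement $\ell_{E_j(t\lambda)}(c)=t\,\iota(c,\lambda)+O(1)$ is stronger than what you actually use and, in the uniform form over a varying direction $\lambda_i\to\lambda$, more than what Kerckhoff's derivative formula $\frac{d}{dt}\ell_c=\int_c\cos\theta_t\,d\lambda$ straightforwardly gives. What one gets cheaply is the one-sided bound $\ell_{E_j(t\lambda)}(c)\le \ell_{g_j}(c)+t\,\iota(c,\lambda)$ and, from $\cos\theta_t\to 1$, the asymptotic $\ell_{E_j(t\lambda)}(c)/t\to\iota(c,\lambda)$, i.e.\ error $o(t)$ rather than $O(1)$. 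Fortunately, only the ratio limit $\ell_{E_j(t_i\lambda_i)}(c)/t_i\to\iota(c,\lambda)$ is needed to identify the limiting projective class in the Thurston compactification, and the uniformity of that limit over the direction $\lambda_i\to\lambda$ is precisely the content of Papadopoulos's continuity theorem, as you note. So the argument is sound; I would just soften the $O(1)$ claim to the $o(t)$ (or plain ratio-limit) version so that the step you actually invoke matches what the cited source provides.
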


We now show some lemmas and their corollaries which will be used in the main step of the proof of \cref{proper degree 1}.

\begin{lemma}
\label{infinity correspondence}
Let $[\Lambda]$ is a projective lamination in $\plc$ as in \cref{Lambda}.
Let $\{\mathbf m_i\}$ be a sequence in $\teich(\Sigma_-) \times \teich(\Sigma_+)$ converging to $[\Lambda]$ in the Thurston compactification product.
Then $\{b \circ q(\mathbf m_i)\}$ also converges to $[\Lambda]$ in the ray compactification $(\ml(\Sigma_-) \times \ml(\Sigma_+))\cup \pl$.
\end{lemma}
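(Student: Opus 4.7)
The plan is as follows. The product structure of the ray compactification reduces the claim to showing, for each non-thrice-punctured-sphere component $\Sigma_j$ of $\Sigma_- \sqcup \Sigma_+$, that $\mu_i^j := b\circ q(\mathbf m_i)|\Sigma_j$ converges to $[\Lambda|\Sigma_j]$ in the ray compactification of $\ml(\Sigma_j)$. The bridge between the hypothesis on $\mathbf m_i|\Sigma_j$ and the conclusion for $\mu_i^j$ will be a shortest pants decomposition. Let $\gamma_i^j$ be a shortest pants decomposition of $(\Sigma_j, \mathbf m_i|\Sigma_j)$; by Bers's lemma each component has length bounded by a topological constant $C$, so \cref{bound int lem} yields the uniform bound $\iota(\gamma_i^j, \mu_i^j) \leq L(C)$. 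On the other hand, since $\Lambda|\Sigma_j$ is arational and uniquely ergodic, the assumption $\mathbf m_i|\Sigma_j \to [\Lambda|\Sigma_j]$ in the Thurston compactification of $\teich(\Sigma_j)$ furnishes positive scalars $r_i^j \to 0$ with $r_i^j\gamma_i^j \to \Lambda|\Sigma_j$ in $\ml(\Sigma_j)$.

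Next I verify projective convergence. Given any subsequential limit $\bar\mu$ of $\mu_i^j/\|\mu_i^j\|$ in $\ml(\Sigma_j)$ (assuming $\|\mu_i^j\|$ diverges along the subsequence), joint continuity of $\iota$ together with the two inputs above gives
\[
\iota(\Lambda|\Sigma_j, \bar\mu) \;=\; \lim_i \iota(r_i^j\gamma_i^j, \mu_i^j)/\|\mu_i^j\| \;\leq\; \lim_i r_i^j L(C)/\|\mu_i^j\| \;=\; 0.
\]
Arationality together with unique ergodicity of $\Lambda|\Sigma_j$ then force $\bar\mu$ to be a positive multiple of $\Lambda|\Sigma_j$, i.e.\ $[\bar\mu] = [\Lambda|\Sigma_j]$ in $\pl(\Sigma_j)$.

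The main obstacle is therefore to rule out the possibility that $\mu_i^j$ remains in a compact subset of $\ml(\Sigma_j)$ for some component $\Sigma_j$, so that it converges to a point of $\ml(\Sigma_j)$ rather than to $\pl(\Sigma_j)$. Passing to a subsequence and assuming $\mu_i^j \to \mu_\infty^j \in \ml(\Sigma_j)$, the analogous estimate (now without rescaling) forces $\iota(\Lambda|\Sigma_j, \mu_\infty^j) = 0$, hence $\mu_\infty^j = c\Lambda|\Sigma_j$ for some $c \geq 0$. If $c > 0$ on every component, the product limit satisfies condition (b) of \cref{main} vacuously (arationality means no compact leaves) and condition (c) (arationality of each $\Lambda|\Sigma_k$ together with the boundary condition on the $\lambda_\pm$ yields the filling property), so it lies in $\ml \setminus D$; this contradicts the properness of $b\circ q$ established in \cref{properness}, since $\mathbf m_i$ escapes every compact subset of $\teich(\Sigma_-) \times \teich(\Sigma_+)$. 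The delicate remaining case is when one or more components have $c = 0$, so that the product limit lies in $D$ and properness is silent; I expect to handle it by transferring the Teichm\"uller divergence of $\mathbf m_i|\Sigma_j$ to the degeneration of the convex core boundary structure $m_i'$ via Sullivan's lemma, and then showing that a pleated surface whose underlying hyperbolic structure degenerates in the arational uniquely ergodic direction $[\Lambda|\Sigma_j]$ cannot have its bending measure collapse to zero in a Kleinian surface group of the type considered here.
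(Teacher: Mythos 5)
Your opening steps match the paper's: you use a shortest pants decomposition $\gamma_i^j$ (the paper uses a shortest geodesic $k_i$), Bers's lemma plus \cref{bound int lem} (packaged in the paper as \cref{shortest int}) to bound $\iota(\gamma_i^j, b\circ q(\mathbf m_i))$, and then arationality and unique ergodicity of $\Lambda|\Sigma_j$ to conclude that any divergent subsequence of $b\circ q(\mathbf m_i)|\Sigma_j$ must projectively converge to $[\Lambda|\Sigma_j]$. Up to that point the two arguments are essentially the same.

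The gap is in ruling out the case where $b\circ q(\mathbf m_i)|\Sigma_j$ stays bounded for some $j$. Your properness argument handles only the sub-case where the bending laminations stay bounded on \emph{every} component and the limits are all positive multiples $c_j\Lambda|\Sigma_j$ with $c_j>0$, since only then does the full sequence $b\circ q(\mathbf m_i)$ converge to a point of $\ml\setminus D$. You do not address the mixed case where the restriction is bounded on $\Sigma_{j_0}$ but diverges on some other $\Sigma_k$ — there the sequence $b\circ q(\mathbf m_i)$ escapes every compact subset of $\ml\setminus D$, so properness yields no contradiction — and you acknowledge, but do not close, the $c_j=0$ case. Your proposed remedy (``a pleated surface whose hyperbolic structure degenerates in the direction $[\Lambda|\Sigma_j]$ cannot have its bending measure collapse to zero'') is vague and misstates what is actually needed: the relevant difficulty is not a small bending measure per se, but the fact that \emph{any} finite limit $s\Lambda|\Sigma_j$ (for any $s\ge 0$) has small intersection with a curve $c$ approximating $|\Lambda|\Sigma_j|$. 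The paper's mechanism for the bounded case is \cref{homotopic geodesic}: with $\mu_\infty^1 = s\Lambda|\Sigma_j$ and $\mu_\infty^2 = \emptyset$, it gives $\len_{m_i'}(c)\asymp\len_{\hyperbolic^3/\phi_i}(c^*)$, so the Thurston-boundary divergence of $\mathbf m_i|\Sigma_j$ (transferred to $m_i'$ by Sullivan's lemma) forces $\len(\phi_i(c))\to\infty$; this is then played off against \cref{must converge}, which says the representations should converge. Neither \cref{homotopic geodesic} nor \cref{must converge} appears in your proposal, and without something playing their role the final step is not a proof.
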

\begin{proof}
Let $\Sigma_j$ be a component of $\Sigma_- \sqcup \Sigma_+$.
Since $\{\mathbf m_i\}$ converges to $[\Lambda]$ contained in the corner, the restriction $\{\mathbf m_i|\Sigma_j\}$ converges in the Thurston compactification to $[\Lambda|\Sigma_j] \in \pl(\Sigma_j)$.
Let $k_i$ be a shortest simple closed geodesic in $(\Sigma_j, \mathbf m_i|\Sigma_j)$.
Since $\Lambda|\Sigma_j$ is arational and uniquely ergodic, there is a sequence of positive numbers $\{s_i\}$ going to $0$ such that $\{s_i k_i\}$ converges to the measured lamination $\Lambda|\Sigma_j$.
By \cref{shortest int}, $\{\iota(b\circ q(\mathbf m_i), k_i)\}$ is bounded.
By the continuity of intersection numbers combined with the arationality and the unique ergodicity of $\Lambda|\Sigma_j$, we see that either $\{b\circ q(\mathbf m_i)\}$ converges to $s (\Lambda|\Sigma_j)$ for some positive scalar $s$, or $\{b\circ q(\mathbf m_i)\}$ converges in the ray compactification to the point at infinity $[\Lambda|\Sigma_j] \in \pl(\Sigma_j)$.


It remains to show that the former case cannot happen.
Suppose, seeking a contradiction, that $\{b \circ q(\mathbf m_i)\}$ converges to $s (\Lambda|\Sigma_j)$.
Then, by \cref{must converge}, we see that $\{q(\mathbf m_i)|\pi_1(\Sigma_j)\}$ converges algebraically.
Let the constant $\delta>0$ given in \cref{homotopic geodesic} for  $\epsilon=1$.
We then take a simple closed curve $c$ on $\Sigma_j$ approximating $|\Lambda|\Sigma_j|$ such that $\iota(c, s\Lambda)< \delta$.
Since $\{\mathbf m_i|\Sigma_j\}$ converges to $[\Lambda|\Sigma_j]$ in the Thurston compactification, we see that $\len_{\mathbf m_i}(c)$ goes to $\infty$ as $i \longrightarrow \infty$.
By \cref{homotopic geodesic}, this implies that $\len_{q(\mathbf m_i)}(c)$ also goes to $\infty$, contradicting the fact that $\{q(\mathbf m_i)|\pi_1(\Sigma_j)\}$ converges.

Thus we have shown that $\{b\circ q(\mathbf m_i)|\Sigma_j\}$ converges to $[\Lambda|\Sigma_j]$ for every $j =1, \dots, n$, and hence $\{b\circ q(\mathbf m_i)\}$ converges to $[\Lambda]$ in the ray compactification.
%
%
%
\end{proof}

The lemma implies the following corollary.
\begin{corollary}
\label{cont at inf}
Let $[\Lambda]$ be a projective measured lamination in $\plc$ as given in \cref{Lambda}.
For any truncated cone neighbourhood $U$ of $[\Lambda]$, there is a neighbourhood $V$ of $[\Lambda]$ in the Thurston compactification product of $\teich(\Sigma_-) \times \teich(\Sigma_+)$ such that $b \circ q(V \cap (\teich(\Sigma_-) \times \teich(\Sigma_+)))$ is contained in $U$.
\end{corollary}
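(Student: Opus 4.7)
The plan is to derive this directly from \cref{infinity correspondence} together with the fact, recorded just before the corollary, that the truncated cone neighbourhoods of $[\Lambda]$ form a basis of neighbourhoods of $[\Lambda]$ in the ray compactification $(\ml(\Sigma_-)\times\ml(\Sigma_+))\cup \pl$.

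The argument is by contradiction. Suppose no such neighbourhood $V$ exists. Then for any decreasing basis of neighbourhoods $\{V_n\}$ of $[\Lambda]$ in the Thurston compactification product, we can pick $\mathbf{m}_n \in V_n \cap (\teich(\Sigma_-)\times \teich(\Sigma_+))$ with $b\circ q(\mathbf{m}_n) \notin U$. By construction $\{\mathbf{m}_n\}$ converges to $[\Lambda]$ in the Thurston compactification product, so by \cref{infinity correspondence} the sequence $\{b\circ q(\mathbf{m}_n)\}$ converges to $[\Lambda]$ in the ray compactification.

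Since $U$ is a truncated cone neighbourhood of $[\Lambda]$, its closure in the ray compactification contains an open neighbourhood $W$ of $[\Lambda]$ in $\pl$. Hence for all sufficiently large $n$ the point $b\circ q(\mathbf{m}_n)$ lies in the closure of $U$; by shrinking $U$ slightly (or working with a smaller truncated cone neighbourhood $U'$ whose closure lies in $U$, which is possible because truncated cone neighbourhoods form a basis) we conclude $b\circ q(\mathbf{m}_n) \in U$ for large $n$, contradicting the choice of $\mathbf{m}_n$.

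The only non-routine input is \cref{infinity correspondence} itself, which has already been established; the main point of the present corollary is simply to translate that convergence statement into a neighbourhood statement. There is essentially no obstacle once one is careful about the fact that ``convergence in the ray compactification'' yields eventual membership in any neighbourhood of $[\Lambda]$, and that the truncated cones form a basis at $[\Lambda]$ by the arationality and unique ergodicity of each $\Lambda|\Sigma_j$.
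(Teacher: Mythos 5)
Your proof is correct and takes essentially the same approach as the paper: both derive the neighbourhood statement from Lemma~\ref{infinity correspondence} by a standard sequential argument (choose a divergent sequence converging to $[\Lambda]$ whose image misses $U$, and contradict the convergence of the images). The extra step where you shrink to a smaller truncated cone $U'$ is unnecessary — by the remark just before Lemma~\ref{E at infinity}, the set $U$ together with its trace on $\pl$ is already an open neighbourhood of $[\Lambda]$ in the ray compactification, so convergence of $\{b\circ q(\mathbf m_n)\}$ to $[\Lambda]$ directly forces $b\circ q(\mathbf m_n)\in U$ for large $n$ — but the extra caution is harmless.
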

\begin{proof}
Consider a sequence $\{\mathbf m_i \}$ in $\teich(\Sigma_-) \times \teich(\Sigma_+)$ converging to $[\Lambda]$ in the Thurston compactification product.
Then by \cref{infinity correspondence}, for any given truncated cone neighbourhood $U$, the sequence $\{b \circ q(\mathbf m_i)\}$ is contained in $U$ for sufficiently large $i$.
This implies the existence of a neighbourhood of $[\Lambda]$ as desired.
\end{proof}

In a more general case where we do not assume that the limit of $\{\mathbf m_i\}$ is  $[\Lambda]$, we have the following.
\begin{lemma}
\label{gen lim}
Let $\{\mathbf m_i\}$ be a sequence in $\teich(\Sigma_-) \times \teich(\Sigma_+)$ which converges to a point $(x_j) \in \pl$ in the Thurston compactification product.
Suppose moreover that 
$\{b \circ q(\mathbf m_i)\}$ converges to a point $(y_j)$ in the ray compactification.
For each $x_j$ that lies on the boundary at infinity $\pl(\Sigma_j)$, let $\mu_j$ be a measured lamination with $[\mu_j]=x_j$.
Then $y_j$ lies either in $\ml(\Sigma_j)$ and $\iota(y_j, \mu_j)=0$ or on the boundary at infinity $\pl(\Sigma_j)$ and is represented by a measured lamination $\nu_j$ with $\iota(\mu_j, \nu_j)=0$.
Moreover, for every $j$ such that $y_j$ lies in $\pl(\Sigma_j)$, the point $x_j$ also lies in $\pl(\Sigma_j)$.
\end{lemma}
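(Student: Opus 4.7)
The plan is to work one component at a time, since both the hypothesis and the conclusion decouple over the components of $\Sigma_-\sqcup\Sigma_+$. So I fix $j$ and analyze the restrictions $\mathbf m_i|\Sigma_j$ and $b\circ q(\mathbf m_i)|\Sigma_j$ separately.

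For the first assertion, assume $x_j\in\pl(\Sigma_j)$ and take a shortest pants decomposition $P_i^j$ of $(\Sigma_j,\mathbf m_i|\Sigma_j)$; by Bers' lemma its total length is uniformly bounded. Since $\mathbf m_i|\Sigma_j\to[\mu_j]$ in the Thurston compactification, after passing to a subsequence there exist scalars $s_i^j>0$ such that $s_i^j P_i^j$ converges in $\ml(\Sigma_j)$ to a positive scalar multiple of $\mu_j$; rescaling $\mu_j$, I may assume the limit equals $\mu_j$. Applying \cref{shortest int} yields a uniform bound on $\iota(b\circ q(\mathbf m_i)|\Sigma_j,P_i^j)$. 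Now pass to the limit using continuity of the intersection number. If $y_j\in\ml(\Sigma_j)$, then
\[
\iota(y_j,\mu_j)=\lim_i \iota\bigl(b\circ q(\mathbf m_i)|\Sigma_j,\,s_i^j P_i^j\bigr)=\lim_i s_i^j\,\iota\bigl(b\circ q(\mathbf m_i)|\Sigma_j,P_i^j\bigr),
\]
which vanishes as soon as $s_i^j\to 0$, since the second factor stays bounded. If $y_j=[\nu_j]\in\pl(\Sigma_j)$, I choose $t_i^j\to 0$ so that $t_i^j\,b\circ q(\mathbf m_i)|\Sigma_j\to\nu_j$, and obtain
\[
\iota(\nu_j,\mu_j)=\lim_i t_i^j s_i^j\,\iota\bigl(b\circ q(\mathbf m_i)|\Sigma_j,P_i^j\bigr)=0,
\]
which is zero since $t_i^j\to 0$ against a bounded factor.

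For the second assertion I prove the contrapositive: if $x_j\in\teich(\Sigma_j)$, then $y_j\in\ml(\Sigma_j)$. Extracting a subsequence, assume $\mathbf m_i|\Sigma_j\to m_\infty\in\teich(\Sigma_j)$. Fix a finite collection $\mathcal C$ of simple closed curves on $\Sigma_j$ which fills $\Sigma_j$ and each of which has $m_\infty$-length at most some constant $K$; for large $i$ they have $\mathbf m_i|\Sigma_j$-length at most $K+1$. Sullivan's lemma then bounds the length of each $c\in\mathcal C$ on the boundary component $m_i'$ of the convex core of $\hthree/\phi_i(\pi_1(S))$ by some uniform constant $K'$, and \cref{bound int lem} gives $\iota(b\circ q(\mathbf m_i)|\Sigma_j,c)\le L(K')$ for every $c\in\mathcal C$. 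Since $\mathcal C$ fills $\Sigma_j$, this pins $b\circ q(\mathbf m_i)|\Sigma_j$ inside a compact subset of $\ml(\Sigma_j)$, forcing $y_j\in\ml(\Sigma_j)$, contradicting $y_j\in\pl(\Sigma_j)$.

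The delicate point is the case in the first assertion in which $s_i^j$ does not tend to $0$. This happens precisely when $P_i^j$ stabilizes, after subsequencing, as a fixed pants decomposition $P$ while $\mathbf m_i|\Sigma_j$ diverges only through Fenchel-Nielsen twists about components of $P$, so that $\mu_j$ is supported on $P$. The naive continuity step above then does not itself produce $\iota(y_j,\mu_j)=0$. I expect to close this case by importing the geometric-limit analysis of \cref{b}: Fenchel-Nielsen twisting about a component $c\subset P$ that is weighted positively by $\mu_j$ forces either a short geodesic or a new rank-one cusp on the Kleinian side, and \cref{bound int lem} then gives $\iota(b\circ q(\mathbf m_i)|\Sigma_j,c)\to 0$, so the contribution of such $c$ to $\iota(y_j,\mu_j)$ vanishes in the limit. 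This is the step where the argument must leave pure Teichmüller theory and invoke the hyperbolic-geometric machinery developed for the properness proof.
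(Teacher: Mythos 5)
Your proof of the second assertion is correct, and in fact cleaner than the paper's: the paper deduces that $y_j$ stays finite by showing (via Lemma~\ref{vertical}) that the representations $\{q(\mathbf m_i)|\pi_1(\Sigma_j)\}$ converge and then invoking geometric convergence of the corresponding convex-core boundary components, whereas your argument needs only Sullivan's lemma, Lemma~\ref{bound int lem}, and the observation that bounded intersection numbers with a filling collection of curves force $b\circ q(\mathbf m_i)|\Sigma_j$ into a compact subset of $\ml(\Sigma_j)$.

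The first assertion, however, has a genuine gap. Your key step is the claim that, after subsequencing, $s_i^j P_i^j\to$ (a multiple of) $\mu_j$ for some scalars $s_i^j$. This is false when the shortest pants decompositions $P_i^j$ stabilize: if $\mathbf m_i|\Sigma_j$ diverges by Fenchel--Nielsen twists and/or pinchings along a fixed multicurve, then $P_i^j$ eventually equals a fixed pants decomposition $P$, all of whose components have bounded length, and any choice of $s_i^j$ making $s_i^jP_i^j$ converge yields a multiple of $P$, \emph{not} of $\mu_j$. So the base case you present does not go through in exactly the case you call ``delicate'', and your sketch for resolving that case is also not quite right: when we twist about $c$ without pinching, $\len_{\mathbf m_i}(c)$ is bounded away from $0$, so by Sullivan's lemma so is $\len_{m_i'}(c)$, and Lemma~\ref{bound int lem} therefore gives only that $\iota(c, b\circ q(\mathbf m_i))$ stays bounded, not that it tends to $0$. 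The ``new rank-one cusp on the Kleinian side'' does occur, but it does not directly force the transverse bending along $c$ to vanish.

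The paper handles the first assertion with a direct, purely two-dimensional argument carried out component by component of $\mu_j$. For each component $\mu_j^0$: if it is a simple closed curve whose $\mathbf m_i$-length tends to $0$, Lemma~\ref{bound int lem} gives $\iota(\mu_j^0, b\circ q(\mathbf m_i))\to 0$ directly. Otherwise (including the twisting case and the non-simple-closed-curve case), one takes the \emph{shortest transversal} simple closed curves $d_i$ crossing $\mu_j^0$: these have bounded $\mathbf m_i$-length, and there are $r_i\to 0$ with $r_i d_i\to\hat\mu_j$ a measured lamination containing $\mu_j^0$. Boundedness of $\iota(d_i, b\circ q(\mathbf m_i))$ from Lemma~\ref{bound int lem} and the vanishing scalars then give $\iota(\hat\mu_j, y_j)=0$, hence $\iota(\mu_j^0, y_j)=0$. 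The crucial device your argument is missing is to replace $\mu_j^0$ by its shortest transversal curve in the application of Bridgeman's bound; and this makes the ``delicate case'' disappear entirely, without appeal to the geometric-limit machinery.
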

\begin{proof}
Suppose that $x_j=[\mu_j]$ lies in $\pl(\Sigma_j)$.
Let $\mu^0_j$ be a connected component of $\mu_j$.
If $\mu_j^0$ is a simple closed curve, either (a) $\len_{\mathbf m_i}(\mu_j^0) \rightarrow 0$ or (b) there are simple closed curves $d_i$ on $\Sigma_j$ with bounded $\len_{\mathbf m_i}(d_i)$ and positive numbers $r_i \longrightarrow 0$ such that $\{r_i d_i\}$ converges to a measured lamination $\hat \mu_j$ containing $\mu_j^0$.
The latter curve $d_i$ can be chosen to be a shortest simple closed curve on $\Sigma_j$ with respect to $\mathbf m_i$ intersecting $\mu_j^0$.
Moreover, if $\mu_j^0$ is not a simple closed curve, 
the condition (b) always holds.

In the case (a), \cref{bound int lem} implies that $\iota(\mu_j^0, b \circ q(\mathbf m_i)|\Sigma_j) \longrightarrow 0$, which implies that $\iota(\mu_j^0, y_j)=0$  when $y_j$ lies in $\ml(\Sigma_j)$ and $\iota(\mu_j^0, \nu_j)=0$ when $x_j=[\nu_j]$ lies on the boundary at infinity, by the continuity of the intersection number.
In the case (b), \cref{bound int lem} implies that $\{\iota(d_i, b\circ q(\mathbf m_i))\}$ is bounded.
Since $r_i$ tends to $0$, by the continuity of the intersection number, we have  $\iota(\hat\mu_j, y_j)=0$, and hence $\iota(\mu^0_j, y_j)=0$ when $y_j$ lies in $\ml(\Sigma_j)$, and in the same way, $\iota(\mu^0_j, \nu_j)=0$ when $x_j=[\nu_j]$ lies on the boundary at infinity.
Thus,  we have $\iota(\mu_j^0, y_j)=0$ or $\iota(\mu^0_j, \nu_j)=0$ for every connected component $\mu_j^0$ of $\mu_j$, which implies that $\iota(\mu_j,y_j)=0$ or $\iota(\mu_j, \nu_j)=0$.

To show the last statement, suppose that $x_j$ lies in $\teich(\Sigma_j)$.
Then, by \cref{vertical}, we see that $\{q(\mathbf m_i)|\pi_1(\Sigma_j)\}$ converges.
Since $\{\mathbf m_i|\Sigma_j\}$ converges by assumption,  the boundary component $\Sigma_j^i$ of the convex core $C(\hyperbolic^3/q(\mathbf m_i))$ corresponding to $\Sigma_j$ converges geometrically to a boundary component $\Sigma_j^\infty$ of the convex core of the geometric limit, which is homotopic to an algebraic locus of $\Sigma_j$ (see the argument of \cite[p.103]{OhQ}).
Then the $j$-th component of $b\circ q(\mathbf m_j)$ converges to the bending lamination of $\Sigma_j^\infty$, and hence $y_j$ must be inside $\ml(\Sigma_j)$.
This shows that if $y_j$ lies on the boundary at infinity, then so does $x_j$.
\end{proof}
By a similar argument, we can also show the following proposition.
\begin{proposition}
\label{partial converse}
Let $[\Lambda]$ be a projective lamination as in \cref{Lambda}.
Let $\{\mathbf m_i\}$ be a sequence in $\teich(\Sigma_-) \times \teich(\Sigma_+)$ such that $\{b \circ q(\mathbf m_i)\}$ converges to a point at infinity represented by $[\Lambda]$ in the ray compactification.
Then $\{\mathbf m_i\}$ converges to $[\Lambda]$ in the Thurston compactification product of $\teich(\Sigma_-) \times \teich(\Sigma_+)$.
\end{proposition}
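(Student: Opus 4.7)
The plan is to argue by contradiction using the compactness of the Thurston compactification product of $\teich(\Sigma_-) \times \teich(\Sigma_+)$. Suppose the conclusion fails, so some subsequence of $\{\mathbf m_i\}$ stays outside a neighbourhood of $[\Lambda]$. By compactness, we may pass to a further subsequence so that $\{\mathbf m_i\}$ converges to a point $(x_j)$ distinct from $[\Lambda]$. If every coordinate $x_j$ lay in $\teich(\Sigma_j)$, then by the continuity of $b \circ q$ on the open part, $\{b \circ q(\mathbf m_i)\}$ would converge inside $\ml(\Sigma_-) \times \ml(\Sigma_+)$, contradicting the hypothesis that its limit is the point at infinity $[\Lambda]$. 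Hence $(x_j)$ lies in the boundary $\pl$.

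Next, I would invoke \cref{gen lim} with the limit $(y_j) = [\Lambda] \in \plc$. Since each $y_j = [\Lambda|\Sigma_j]$ lies on the boundary at infinity $\pl(\Sigma_j)$, the last statement of \cref{gen lim} forces each $x_j$ to lie in $\pl(\Sigma_j)$ as well, so in fact $(x_j) \in \plc$. Writing $x_j = [\mu_j]$, the same lemma gives $\iota(\mu_j, \nu_j) = 0$ for any measured lamination $\nu_j$ representing $y_j$. By the unique ergodicity of $\Lambda|\Sigma_j$, any such $\nu_j$ is a positive multiple of $\Lambda|\Sigma_j$, so $\iota(\mu_j, \Lambda|\Sigma_j) = 0$ for every $j$.

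It remains to deduce $[\mu_j] = [\Lambda|\Sigma_j]$. Because the support of $\Lambda|\Sigma_j$ is arational in $\Sigma_j$, each complementary region is either a disk or an annulus containing a cusp, neither of which supports an essential geodesic lamination; combined with $\iota(\mu_j, \Lambda|\Sigma_j) = 0$, this forces the support of $\mu_j$ to lie in the support of $\Lambda|\Sigma_j$. Unique ergodicity then makes $\mu_j$ a positive scalar multiple of $\Lambda|\Sigma_j$, so $x_j = [\Lambda|\Sigma_j]$ for every $j$, yielding $(x_j) = [\Lambda]$ — a contradiction. Since every convergent subsequence of $\{\mathbf m_i\}$ has limit $[\Lambda]$, the whole sequence converges to $[\Lambda]$ in the Thurston compactification product.

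The main obstacle I anticipate is the last step: rigorously translating the vanishing $\iota(\mu_j, \Lambda|\Sigma_j) = 0$ into containment of supports and then into projective equality. Although this is a standard feature of arational uniquely ergodic projective classes, care is needed in our setting because $\Sigma_j$ carries cusps coming from $S(\lambda_\pm)$, and because $\mu_j$ could a priori contain isolated leaves spiralling into $|\Lambda|\Sigma_j|$ rather than coinciding with its leaves; verifying that arationality rules out such configurations is the key geometric input.
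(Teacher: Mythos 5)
Your proposal is correct and follows essentially the same route as the paper: extract a convergent subsequence in the Thurston compactification product, rule out an interior limit via continuity of $b\circ q$ (the paper cites Keen--Series here), and then apply \cref{gen lim} together with the arationality and unique ergodicity of each $\Lambda|\Sigma_j$ to force the boundary limit to equal $[\Lambda]$. The ``main obstacle'' you flag at the end is in fact a non-issue under the paper's standing convention (\S 2.2) that the support of a measured lamination consists entirely of its minimal components, so $\mu_j$ has no isolated spiralling leaves; given this, $\iota(\mu_j,\Lambda|\Sigma_j)=0$ together with arationality and the minimality of $|\Lambda|\Sigma_j|$ immediately yields $|\mu_j|=|\Lambda|\Sigma_j|$, and unique ergodicity finishes the argument exactly as you indicate.
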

\begin{proof}
We have only to show that any subsequence of $\{\mathbf m_i\}$ has a subsequence converging to $[\Lambda]$ in the Thurston compactification product.
Passing to a subsequence, we can assume that $\{\mathbf m_i\}$ converges to either a point $\mathbf n$ in $\teich(\Sigma_-) \times \teich(\Sigma_+)$ or a projective lamination $(y_j)\in \pl$ in the Thurston compactification product.
In the former case, by the continuity of the function $b$ due to \cite{KS}, $\{b\circ q(\mathbf m_i)\}$ converges to $b\circ q(\mathbf n)$, contradicting our assumption.

Suppose that $\{\mathbf m_i\}$ converges to $(y_j)$ in the Thurston compactification product.
Since $[\Lambda]$ lies in the corner, and $\Lambda \cap \Sigma_j$ is arational and uniquely ergodic for every $j$, by \cref{gen lim}, we have 
we have $y_j=\Lambda \cap \Sigma_j$, and hence $(y_j)=[\Lambda]$.
This completes the proof.
\end{proof}

Since truncated cone neighbourhoods form a basis of neighbourhoods of $[\Lambda]$ as remarked before, we have the following corollary.
\begin{corollary}
\label{cont inv at inf}
Let $V$ be a neighbourhood of $[\Lambda]$ in the Thurston compactification product of $\teich(\Sigma_-) \times \teich(\Sigma_+)$ for $[\Lambda]$ as in \cref{Lambda}.
Then, there is a truncated cone neighbourhood $U$ of $[\Lambda]$ in the ray compactification such that $(b\circ q)^{-1}(U)$ is contained in $V \cap (\teich(\Sigma_-) \times \teich(\Sigma_+))$.
\end{corollary}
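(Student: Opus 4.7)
The plan is to argue by contradiction, invoking \cref{partial converse} as the essential input. Assume, for contradiction, that no truncated cone neighbourhood $U$ of $[\Lambda]$ satisfies $(b\circ q)^{-1}(U) \subset V \cap (\teich(\Sigma_-) \times \teich(\Sigma_+))$. Since the truncated cone neighbourhoods form a basis of neighbourhoods of $[\Lambda]$ in the ray compactification (as noted immediately before the statement), and since the ray compactification is first countable at $[\Lambda]$ (being a finite product of metrisable spaces), I would choose a nested sequence $U_1 \supset U_2 \supset \cdots$ of such neighbourhoods whose closures in the ray compactification intersect in $\{[\Lambda]\}$. By the contradictory hypothesis, for every $i$ we can select
\[
\mathbf m_i \in (b\circ q)^{-1}(U_i) \setminus V.
\]

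Next I would observe that the containment $b \circ q(\mathbf m_i) \in U_i$ together with $\bigcap_i \overline{U_i} = \{[\Lambda]\}$ forces $\{b \circ q(\mathbf m_i)\}$ to converge to the point at infinity $[\Lambda]$ in the ray compactification $(\ml(\Sigma_-) \times \ml(\Sigma_+)) \cup \pl$. Applying \cref{partial converse} then yields that $\{\mathbf m_i\}$ converges to $[\Lambda]$ in the Thurston compactification product of $\teich(\Sigma_-) \times \teich(\Sigma_+)$. Since $V$ is, by assumption, a neighbourhood of $[\Lambda]$ there, we would conclude that $\mathbf m_i \in V$ for all sufficiently large $i$, contradicting our choice of $\mathbf m_i$. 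This contradiction finishes the proof.

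I do not expect any serious obstacle in this argument: the substantive content — that $(b\circ q)^{-1}$ respects the identification of the two boundaries at infinity — has already been packaged into \cref{partial converse}, which in turn rests on \cref{must converge,gen lim,shortest int,bound int lem}. The present corollary is simply the topological repackaging of \cref{partial converse} in the language of neighbourhood bases, so the only thing to verify carefully is the first countability of the ray compactification at $[\Lambda]$, which is automatic from the product and metric structure.
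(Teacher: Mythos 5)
Your proof takes essentially the same route as the paper: both rest on the asserted fact that truncated cone neighbourhoods form a neighbourhood basis at $[\Lambda]$ in the ray compactification, combined with \cref{partial converse}, and indeed the paper's proof is exactly that one-sentence deduction. One small imprecision in your formulation: with the train track $\tau$ fixed (as the paper does for the entire section), the truncated cones $U_{K}$ for increasing $K$ are nested, but $\overline{U_{K}}\cap\pl$ is independent of $K$, so a nested sequence of truncated cones supported on $\tau$ cannot have closures in the ray compactification meeting only in $\{[\Lambda]\}$; the boundary part does not shrink. The cleaner phrasing, which does not change the structure of your argument, is to take a countable decreasing neighbourhood basis $W_1\supset W_2\supset\cdots$ of $[\Lambda]$ in the ray compactification (first countability), choose by the basis property a truncated cone neighbourhood $U_i\subset W_i$ for each $i$ (the $U_i$ need not be nested), and then $b\circ q(\mathbf m_i)\in U_i\subset W_i$ already forces $b\circ q(\mathbf m_i)\to[\Lambda]$, after which \cref{partial converse} and the contradiction with $\mathbf m_i\notin V$ go through exactly as you wrote.
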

%

By combining these results, we obtain the following technical proposition, which constitutes an essential step for our construction of a homotopy.
\begin{proposition}
\label{three rings}
Let $\Lambda$ be a measured lamination given in \cref{Lambda}.
Then there are three nested truncated cone neighbourhoods $U_0 \subset U_1 \subset U_2$ which satisfy the following.
\begin{enumerate}
\item Every measured lamination in  $U_2$ satisfies the condition (c) of \cref{main} (with $\lambda_-$ and $\lambda_+$).
\item Let $V_1$ be $(b\circ q)^{-1}(U_1)$.
Then there is an open set $V_2$ containing the closure $\bar V_1$ such that both $b \circ q(V_2)$ and $\mathcal{E}(V_2)$ are contained in $U_2$.
\item Neither $\mathcal E$ nor $b \circ q$ maps a point outside $V_1$into $U_0$.
\item For any point $m \in V_2 \setminus V_1$, the segment connecting $b \circ q(m)$ and $\mathcal E(m)$ is disjoint from $U_0$.
\end{enumerate}
\end{proposition}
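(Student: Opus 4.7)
The plan is to realise $U_0 \subset U_1 \subset U_2$ as three nested truncated cones on a single bi-recurrent train track $\tau$ with cutoffs $K_0 > K_1 > K_2 > 0$ to be chosen. First I would fix $\tau$ sufficiently refined; using that truncated cone neighbourhoods form a basis at $[\Lambda]$ in the ray compactification, together with the arationality and unique ergodicity of each $\Lambda|\Sigma_j$, I can arrange that every measured lamination $\mu$ carried by $\tau$ with positive weights has its sup-normalised weight vector $\hat{w}_\mu := w(\mu)/\|w(\mu)\|_\infty$ within some small $\epsilon$ of $\hat{w}_\Lambda$ in sup norm. Writing $\mu_0 = \min_b \hat{w}_\Lambda(b)$, $\delta$ for the gap between $\mu_0$ and the next smallest value of $\hat{w}_\Lambda$, and $B_{\min} = \{b : \hat{w}_\Lambda(b) = \mu_0\}$, I take $\epsilon < \min(\mu_0, \delta)/4$, which guarantees that for every such $\mu$ the minimum of $\hat{w}_\mu$ is attained on $B_{\min}$ with value in $[\mu_0-\epsilon, \mu_0+\epsilon]$.

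For any $K_2 > 0$, setting $U_2 := U_{K_2}$ gives (1) immediately, since every lamination in $U_2$ has support $|\Lambda|$, which by hypothesis fills $\Sigma_j$ together with $\lambda_j$. To set up (2) I combine \cref{cont at inf} for $b\circ q$ with the continuity of $\mathcal{E}$ at the Thurston boundary (\cref{E at infinity}) to pick an open neighbourhood $W$ of $[\Lambda]$ in the Thurston compactification product with $b\circ q(W \cap \teich),\,\mathcal{E}(W \cap \teich) \subset U_2$, set $V_2 := W \cap \teich$, take a smaller open neighbourhood $W'$ with $\overline{W'} \subset W$, and apply \cref{cont inv at inf} to find $K_1 > K_2$ such that $V_1 := (b\circ q)^{-1}(U_{K_1}) \subset W' \cap \teich$; this gives $\overline{V_1} \subset \overline{W'} \cap \teich \subset V_2$, completing (2). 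For $U_0$, I apply \cref{cont at inf} once more to find a Thurston neighbourhood $W^*$ with $W^* \cap \teich \subset V_1$, use continuity of $\mathcal{E}^{-1}$ at the boundary to pick $K^* > 0$ with $\mathcal{E}^{-1}(U_{K^*}) \subset W^* \cap \teich \subset V_1$, and finally set $K_0 := \max(2K_1, 2K^*)$ and $U_0 := U_{K_0}$. Condition (3) then follows immediately: $(b\circ q)^{-1}(U_0) \subset V_1$ because $K_0 > K_1$, and $\mathcal{E}^{-1}(U_0) \subset \mathcal{E}^{-1}(U_{K^*}) \subset V_1$ because $K_0 \geq K^*$.

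The main obstacle is (4). For $m \in V_2 \setminus V_1$ both $A := b\circ q(m)$ and $B := \mathcal{E}(m)$ lie outside $U_0$ by (3), but the complement of the truncated cone $U_0$ is not convex, so the segment joining them could a priori dip back into $U_0$. My plan is to exhibit one single branch $b$ on which both $w_A(b)$ and $w_B(b)$ are bounded by $K_0$; by convexity of $[0,K_0]$, the segment weight on $b$ then stays $\leq K_0$ throughout, ruling out the segment ever entering $U_0$. I claim any $b \in B_{\min}$ works: from $A \notin U_{K_1}$ and the direction closeness, $\min w_A$ is attained on $B_{\min}$ and equals $s_A \min \hat{w}_A$ where $s_A := \|w_A\|_\infty$, and is $\leq K_1$, forcing $s_A \leq K_1/(\mu_0-\epsilon)$; hence for any $b \in B_{\min}$,
\[
w_A(b) \leq s_A(\mu_0+\epsilon) \leq K_1 \cdot \frac{\mu_0+\epsilon}{\mu_0-\epsilon} \leq 2K_1 \leq K_0.
\]
Symmetrically, $B \notin U_{K^*}$ yields $w_B(b) \leq 2K^* \leq K_0$ for every $b \in B_{\min}$, establishing (4).
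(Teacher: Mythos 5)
There is a genuine gap, and it lies in the foundational assumption on which your argument for condition (4) rests. You claim that by refining $\tau$ you can arrange that every $\mu$ carried by $\tau$ with positive weights has $\hat{w}_\mu = w(\mu)/\|w(\mu)\|_\infty$ within $\epsilon$ of $\hat{w}_\Lambda$ \emph{in the global sup norm over all branches of $\tau$}. This is impossible when $\Sigma_-\sqcup\Sigma_+$ has more than one component (which is the generic situation here). The train track $\tau = \sqcup_j\tau_j$ is a disjoint union, and a lamination $\mu\in U(\tau)$ can have wildly different scales on the different $\tau_j$: e.g.\ $\mu|\Sigma_1 = t\,\Lambda|\Sigma_1$ with $t$ tiny and $\mu|\Sigma_2=\Lambda|\Sigma_2$ still lies in $U(\tau)$, but its globally sup-normalised weight vector is nowhere near $\hat{w}_\Lambda$. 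Refining $\tau$ controls only the \emph{projective direction within each component}, not the relative scales between components, and $U(\tau)$ being a cone makes those relative scales arbitrary. Consequently, the conclusion that $\min_b w_A(b)$ is attained on $B_{\min}$, and the bound $w_A(b)\leq 2K_1$ for $b\in B_{\min}$, both fail: $A\notin U_{K_1}$ only guarantees a weight $\leq K_1$ on \emph{some} branch in \emph{some} component, while the weights of $A$ on other components may be arbitrarily large; and the same for $B$ with a possibly different component. When the component carrying the small $A$-weight differs from the one carrying the small $B$-weight, there is no single branch on which both endpoints are bounded, and the segment can indeed dip into $U_0$, so the convexity trick does not close the argument.

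This is exactly the failure mode the paper's proof is designed to address. The paper argues (4) by contradiction in the ray compactification: assuming a sequence $\lambda_i$ on the segments converges to the \emph{corner} point $[\Lambda]$, it passes to a convergent subsequence of $\{\mathbf m_i\}$ in the Thurston compactification product, uses \cref{E at infinity} and \cref{partial converse} to identify the per-component limits of $\mathcal{E}(\mathbf m_i)$ and $b\circ q(\mathbf m_i)$, and then invokes \cref{gen lim} -- i.e.\ the boundedness of the average bending measure and resulting vanishing intersection numbers -- to show the segment limit cannot be the arational corner point. That dynamical input relating the per-component behaviour of $b\circ q$ and $\mathcal{E}$ to the same diverging sequence $\{\mathbf m_i\}$ is precisely what your purely linear/combinatorial weight-vector argument lacks, and there is no obvious way to recover it by tuning the constants $K_0,K_1,K^*$.

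A secondary, smaller issue: your justification of (1) -- that "every lamination in $U_2$ has support $|\Lambda|$" -- is false; truncated cone neighbourhoods contain laminations with many different supports (e.g.\ weighted multicurves carried by $\tau$). The paper instead observes that the set of laminations failing condition (c) is closed, avoids the arational $[\Lambda]$, and can therefore be excluded by taking $\tau$ fine enough. Your conclusion for (1) is salvageable, but the reason you give is not correct.
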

\begin{proof}
If a measured lamination on $\Sigma_-\sqcup \Sigma_+$ does not satisfy the condition (c) of \cref{main}, then either it has a component homotopic to a component of $\lambda_-$ or $\lambda_+$, or it has two components which are homotopic in $S \times [0,1]$.
Since $\Lambda$ is arational and is not contained in $D$, every sufficiently small truncated cone neighbourhood of $[\Lambda]$ contains no such measured laminations.
Therefore, by choosing a truncated cone neighbourhood $U_2$ to be sufficiently small, the condition (1) is satisfied.

By \cref{E at infinity,cont at inf}, there is a neighbourhood $V_2$ of $[\Lambda]$ in the Thurston compactification product such that both $\mathcal E(V_2)$ and $b \circ q(V_2)$ are contained in $U_2$.
By \cref{E at infinity,cont inv at inf}, we can take a neighbourhood $U_1'$ of $[\Lambda]$ such that both $\mathcal E^{-1}(U_1')$ and $(b\circ q)^{-1}(U_1')$ are contained in $V_2$.
Again by \cref{E at infinity,cont at inf,cont inv at inf}, we can take a neighbourhood $V_1'$ of $[\Lambda]$ in the Thurston compactification product such that $\overline{V_1'}$ is contained in $\mathcal E^{-1}(U_1')\cap (b\circ q)^{-1}(U_1')$, and a truncated cone neighbourhood $U_1$ of $[\Lambda]$ in the ray compactification such that both $\mathcal E^{-1}(U_1)$ and $(b\circ q)^{-1}(U_1)$ are contained in $V_1'$.
These $U_1, U_2, V_2$ and $V_1=(b\circ q)^{-1}(U_1)$ satisfy the condition (2).

Now, we shall show that we can take a truncated cone neighbourhood $U_0$ of $[\Lambda]$ in the ray compactification satisfying (3) and (4).
Let $\{\mathbf m_i\}$ be an arbitrary sequence in $\teich(\Sigma_-) \times \teich(\Sigma_+)$ lying outside $V_1$.
Since $V_1$ is a neighbourhood of $[\Lambda]$ in the Thurston compactification product, \cref{E at infinity} implies that $\{\mathcal E(\mathbf m_i)\}$ cannot converge to $[\Lambda]$ in the ray compactification.
Similarly, by \cref{partial converse}, we see that $\{b\circ q(\mathbf m_i)\}$ cannot converge to $[\Lambda]$ in the ray compactification either.
Therefore, if we take $U_0$ to be a sufficiently small truncated cone neighbourhood of $[\Lambda]$, then the condition (3) is satisfied.

We next show that $U_0$ can be taken to satisfy the condition (4).
Let $\{\mathbf m_i\}$ be a sequence in $V_2 \setminus V_1$, and $\lambda_i$  a point on the segment connecting $b\circ q(\mathbf m_i)$ and $\mathcal E(\mathbf m_i)$ such that $\{\lambda_i\}$ converges to $[\Lambda]$ in the ray compactification.
Taking  a subsequence, we can assume that either $\{\mathbf m_i\}$ converges to a point $\mathbf n \in \teich(\Sigma_-) \times \teich(\Sigma_+)$ or does not have a convergent sequence inside $\teich(\Sigma_-) \times \teich(\Sigma_+)$, and converges  to  a point $(w_j) \in \pl$ in the Thurston boundary.
The boundary point $(w_j) \in \pl$ in the latter case is distinct from $[\Lambda]$ since $\{\mathbf m_i\}$ lies outside $V_1$.
In the former case, passing to a subsequence $\{\lambda_i\}$ converges to a point inside $\ml(\Sigma_-) \times \ml(\Sigma_+)$ which lies on a segment connecting $\mathcal E(\mathbf n)$ and $b \circ q(\mathbf n)$, contradicting our assumption.
In the latter case, by \cref{E at infinity}, we see that $\{\mathcal{E}(\mathbf m_i)\}$ converges to $(w'_j) \in \pl$ such that $w_j'=\mathcal (w_j)$ if $w_j \in \teich(\Sigma_j)$ and $w_j'=w_j$ otherwise.
The sequence $\{b \circ q(\mathbf m_i)\}$ converges to a point $(y_j)$ in the ray compactification passing to a subsequence.
If there is $j$ such that  $w_j'$  lies in $\ml(\Sigma_j)$, then $y_j$ must  lie in $\ml(\Sigma_j)$.
Then the limit of $\lambda_j$ also lies in $\ml(\Sigma_j)$ and contradicts the assumption that $[\Lambda]$ is a corner point.
Therefore, we have $(w_j')=(w_j)$ and $w_j \in \pl(\Sigma_j)$.
Let $\mu$ be a measured lamination on $\Sigma_- \sqcup \Sigma_+$ representing $(w_j)$.
By \cref{gen lim} we have $\iota(y_j, \mu|\Sigma_j)=0$ if $y_j$ lies inside $\ml(\Sigma_j)$ and is represented by $\mu_j' \in \ml(\Sigma_j)$ with $y_j=[\mu_j']$ and $\iota(\mu|\Sigma_j, \mu_j')=0$ if $y_j$ lies in $\pl(\Sigma_j)$.
Recall that $\lambda_i$ lies on the segment between $\mathcal E(\mathbf m_i)$ and $b\circ q(\mathbf m_i)$.
Therefore $\lambda_j$   converges to a point $(z_j)$ in the ray compactification such that $z_j$ is a weighted union of  $\mu|\Sigma_j$ and either $y_j$ or $\mu_j'$.
Therefore,  we have $\iota(\mu|\Sigma_j, z_j)=0$ if $z_j$ lies inside $\ml(\Sigma_j)$ and is represented by $\nu_j \in \ml(\Sigma_j)$ with $z_j=[\nu_j]$ and $\iota(\nu_j, \mu|\Sigma_j)=0$ if $z_j$ lies in $\pl(\Sigma_j)$.
Since $[\mu] \neq [\Lambda]$ and $\Lambda|\Sigma_j$ is arational, we have $(z_j)\neq [\Lambda]$, and we are led to a contradiction.
Thus we have shown that by taking $U_0$ to be sufficiently small, the condition (4) also holds.
This completes the proof.
\end{proof}

To define a locally degree-1 map $\cF \colon \ct \to \cml$, and a homotopy from $\cb$ to $\cF$, we shall first define a map $F\colon \teich(\Sigma_-) \times \teich(\Sigma_+) \to \ml(\Sigma_- )\times \ml(\Sigma_+)$ which induces $\cF$.

\begin{definition}
\label{F}
Let $U_0, U_1, U_2 \subset \ml(\Sigma_-)\times  \ml(\Sigma_+)$ and $V_1, V_2 \subset  \teich(\Sigma_-) \times \teich(\Sigma_+)$ be open sets  given in \cref{three rings}, and let $\tau$ be the train track given in \cref{train track}.
Let $F \colon \teich(\Sigma_-) \times \teich(\Sigma_+) \to \ml(\Sigma_- )\times \ml(\Sigma_+)$ be a continuous map defined as follows:
\begin{enumerate}[(i)]
\item For $x$ outside $V_2$, we define $F(x)=b \circ q(x)$.
\item For $x \in V_1$, we define $F(x)=\ce(x)$.
\item For $x \in V_2 \setminus V_1$, letting $t(x)$ be $\displaystyle \frac{d_\tau(x, \overline{V_1})}{d_\tau(x, \overline{V_1})+d_\tau(x, \overline{V_2^c})}$,
we define $F(x)$ to be the point dividing the segment connecting $\ce(x)$ and $b\circ q(x)$ internally by $t(x): 1-t(x)$ in the weight system coordinates of $\tau$.
\end{enumerate}
\end{definition}
\begin{lemma}
\label{proper F}
For the map $F$ defined above, there is no sequence in $\teich(\Sigma_-) \times \teich(\Sigma_+)$ diverging to infinity, whose image under $F$ has a subsequence converging in $\ml(\Sigma_-)\times \ml(\Sigma_+)\setminus D$.
\end{lemma}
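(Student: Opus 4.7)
The plan is a case analysis according to which of the three regions $V_2^c$, $V_1$, or $V_2 \setminus V_1$ the sequence $\{m_i\}$ eventually lies in. Supposing for contradiction that $\{m_i\}$ has no convergent subsequence in $\teich(\Sigma_-)\times\teich(\Sigma_+)$ while, after thinning, $F(m_i) \to \nu \in \ml(\Sigma_-) \times \ml(\Sigma_+) \setminus D$, one of the three regions contains a whole subsequence. In $V_2^c$ the map $F$ equals $b \circ q$, and the properness of $b\circ q$ established in \cref{properness} directly yields a convergent subsequence. In $V_1$ the map $F$ equals the homeomorphism $\ce$, so $m_i = \ce^{-1}(F(m_i)) \to \ce^{-1}(\nu) \in \teich(\Sigma_-) \times \teich(\Sigma_+)$, again contradicting divergence.

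The main case is $m_i \in V_2 \setminus V_1$. Since $V_1 = (b\circ q)^{-1}(U_1)$ pulls back an open neighbourhood $U_1$ of $[\Lambda]$ under the continuous extension of $b\circ q$ fixing $[\Lambda]$ (see \cref{infinity correspondence} and \cref{cont inv at inf}), $V_1$ is itself an open neighbourhood of $[\Lambda]$ in the Thurston compactification product; consequently, passing to a subsequence, the Thurston limit $(x_j) \in \pl$ of $\{m_i\}$ satisfies $(x_j) \neq [\Lambda]$. By \cref{E at infinity}, $\ce(m_i) \to (x_j)$ in the ray compactification, so $\ce(m_i)$ diverges in $\ml$. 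Working in the $\tau$-weight coordinates valid on $U_2$, some branch weight $\ce(m_i)^{l_0}$ tends to infinity, forcing $t_i \to 0$ (else the corresponding coordinate of $F(m_i)$ blows up). Running the same branchwise boundedness argument and passing to further subsequences, $t_i \ce(m_i)$ converges to a non-negative weight system $c$ on $\tau$, and therefore $b\circ q(m_i) \to \omega := \nu - c$ in $\ml$. Since $\{m_i\}$ is divergent, the properness of $b\circ q$ to $\ml \setminus D$ forces $\omega \in D$.

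The final step is to derive a contradiction between $\omega \in D$ and $\nu \in \ml \setminus D$. Because $\omega$ is a limit of points of $U_2$, its $\tau$-weights are all bounded below by the positive constant defining $U_2$, so $|\omega|$ passes through every branch of $\tau$. Since $\tau$ is bi-recurrent and carries the arational, uniquely ergodic $\Lambda$ with all positive weights, $\tau$ fills $\Sigma_- \sqcup \Sigma_+$, and consequently so does $|\omega|$. Together with $\lambda_\pm$ on $S(\lambda_\pm)$, this shows that $\omega$ satisfies condition (c) of \cref{main}; hence $\omega \in D$ only because it fails condition (b), namely it has a compact leaf $k$ with weight $\geq \pi$. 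But then $\nu = \omega + c$ has weight on $k$ at least the weight of $\omega$ on $k$, hence $\geq \pi$, so $\nu$ itself fails (b), contradicting $\nu \in \ml \setminus D$.

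The hard part of executing this plan is the interpolation case: one has to carefully track the $\tau$-weight coordinates of the convex combination to obtain $t_i \to 0$ from a single divergent branch, to produce the corrective weight system $c$ as a genuine measured lamination, and above all to verify that every lamination in the closure of $U_2$ fills $\Sigma_- \sqcup \Sigma_+$, so that condition (c) of \cref{main}, which is not closed in general, is inherited by the limit $\omega$. This last point is what allows the (b)-failure to transfer from $\omega$ to $\nu = \omega + c$, closing the argument.
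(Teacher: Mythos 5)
The case analysis and the initial reduction are sound, as are the observations that $F=b\circ q$ off $V_2$ and $F=\ce$ on $V_1$ dispose of those two cases, and that in $V_2\setminus V_1$ one may extract subsequential limits $\ce(\mathbf m_i)\to\infty$, $t_i\ce(\mathbf m_i)\to c$, $b\circ q(\mathbf m_i)\to\omega$, and $F(\mathbf m_i)\to\nu$ with $w(\nu)=w(\omega)+w(c)$ in the $\tau$-coordinates. You are also right that $\omega$ lies in $D$ (by properness) while satisfying condition (c), hence has a compact leaf $k$ of weight $\geq\pi$; this matches the paper's conclusion drawn from part (1) of \cref{three rings}.

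The gap is the final step: you assert that $\nu=\omega+c$ still has $k$ as a compact leaf of weight $\geq\pi$. But the equality $w(\nu)=w(\omega)+w(c)$ is an identity of \emph{weight systems} on $\tau$, not of measured laminations. Two laminations carried by the same train track can intersect transversally, and if the support of $c$ crosses $k$, the weight system $w(\omega)+w(c)$ determines a lamination in which $k$ is dissolved (the leaves running along $k$'s branches branch off and do not close up), so $\nu$ need not have a compact leaf at all. Your argument gives no control over the support of $c$. This is precisely what the bulk of the paper's proof is devoted to: Claims \ref{must converge again}, \ref{H-limit}, \ref{coincide}, \ref{twist}, and \ref{in 0} show, via the analysis of geometric limits, that the divergent direction $y_k$ of the Thurston limit of $\{\mathbf m_i\}$ is supported on the multi-curve $P_\infty^0$ of pinching curves, which in turn coincides (via \cref{new parabolic}) with $\nu_0$, the union of compact leaves of $\omega$ of weight $\geq\pi$. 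It is only because $c$ is thereby shown to be a non-negative multiple of (components of) $\nu_0$ that the segment limits onto a ray lying entirely in $D$, so that $\nu$ inherits a heavy compact leaf. Without this input, the transfer of the $(b)$-failure from $\omega$ to $\nu$ is unjustified.

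A smaller point: the inference ``\,$|\omega|$ passes through every branch of a filling $\tau$, therefore $|\omega|$ fills $\Sigma_-\sqcup\Sigma_+$'' is also not automatic (positive weights on all branches do not by themselves preclude closed leaves or non-filling sublaminations); what is needed, and what the paper uses, is that sufficiently small truncated cone neighbourhoods of the arational, uniquely ergodic $[\Lambda]$ avoid all laminations violating (c), and this persists under taking limits within the closed cone $w\geq K$. But this can be repaired by invoking \cref{three rings}\,(1) directly, as the paper does; the substantive missing idea is the control on $c$.
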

\begin{proof}
Since both $\ce$ and $b \circ q$ are proper, $\ce$ as a map to $\ml(\Sigma_-) \times  \ml(\Sigma_+)$ and $b\circ q$ as a map to $\ml(\Sigma_-) \times \ml(\Sigma_+) \setminus D$, we have only to consider the case when $\{\mathbf m_i\}$ lies in  $V_2 \setminus V_1$.
Let $\{\mathbf m_i\}$ be a sequence in $V_2 \setminus V_1$ which does not have a convergent subsequence.
We can assume that it converges in the Thurston compactification product to a point $(y_j)$ in $\pl$, passing to a subsequence.
We need to show that $\{F(\mathbf m_i)\}$ does not have a convergent subsequence in $\ml(\Sigma_- ) \times \ml(\Sigma_+) \setminus D$.

Recall from \cref{three rings} that both $\{\mathcal E(\mathbf m_i)\}$ and $\{b\circ q(\mathbf m_i)\}$ lie in $U_2$.
Since $\ce$ is a proper map to $\ml(\Sigma_-)  \times \ml(\Sigma_+)$, the sequence $\{\ce(\mathbf m_i)\}$ diverges to infinity, necessarily within $U_2$.
If $\{b\circ q(\mathbf m_i)\}$ also diverges to infinity (within $U_2$), the segment connecting $\ce(\mathbf m_i)$ and $b \circ q(\mathbf m_i)$ also diverges to infinity within $U_2$ as $i \longrightarrow \infty$, and we are done.
It remains to deal with the case when $\{b\circ q(\mathbf m_i)\}$ converges to a point $\nu$ in $D$ after passing to a subsequence.
By the part (1) of \cref{three rings}, $\nu$ must satisfy the condition (c) of \cref{main}, and hence contains a compact leaf with weight larger than or equal to $\pi$.
We denote the union of all such components of $\nu$ by $\nu_0$.
Let $\Sigma_k$ be a component of $\Sigma_- \sqcup \Sigma_+$ containing a component of $\nu_0$.
Then $y_k$ lies in $\pl(\Sigma_k)$ since otherwise the component of the convex core boundary corresponding to $\Sigma_k$ converges geometrically without giving rise to a new parabolic curve, as argued in the previous section.

We first remark the following, which was just a restatement of \cref{must converge}.

\begin{claim}
\label{must converge again}
Let $\phi_i \colon \pi_1(S) \to \pslc$ be a representation corresponding to $q(\mathbf m_i)$.
Then, $\{\phi_i\}$ converges to some $\psi \in \AH(S)$ as $i \longrightarrow \infty$.
\end{claim}
 

Let $P_i$ be a shortest pants decomposition of $(\Sigma_- \sqcup \Sigma_+, \mathbf m_i)$.
Let $P_\infty$ be the Hausdorff limit of $\{P_i\}$ (after passing to a subsequence), which is a geodesic lamination.
Invoking an argument which we used in the preceding section, we can show the following.

\begin{claim}
\label{H-limit}
Every minimal component of $P_\infty$ is a simple closed curve.
The geometric limit $M_\infty$ has neither a new geometrically infinite end (i.e. one not corresponding to that of $(\hyperbolic^3/\phi_i(\pi_1(S))_0$) nor a torus cusp.
\end{claim}
\begin{proof}
Suppose, seeking a contradiction, that $P_\infty$ has a minimal component $\varrho$ which is not a simple closed curve.
Let $S(\varrho) \subset S$ be the minimal supporting surface of $\varrho$.
Then $M_\infty$ must have a new geometrically infinite end which lifts to the algebraic limit.
(This follows from the fact in Minsky's model of $\hyperbolic^3/\phi_i(\pi_1(S))$, the subsurface$S(\varrho)$ must support a tight geodesic whose length goes to $\infty$ in this case.)


Now, we put a basepoint on the algebraic locus $f_\infty(S)$, and consider a geometric limit $M_\infty$ containing the algebraic limit of $\{\phi_i\}$.
As in \cref{nearest sd end}, by taking a non-peripheral simple closed curve $d$ on $S(\varrho)$ with $\iota(d, \varrho)<\delta$ for the constant $\delta$ given in \cref{homotopic geodesic} with $\epsilon=1$, we are led to a contradiction.
Thus we have shown that every minimal component of $P_\infty$ is a simple closed curve.
This argument also shows that $M_\infty$ cannot have a nearest simply degenerate end, for such an end can be lifted to the algebraic limit by \cref{nearest}.

Next suppose that $M_\infty$ has a torus cusp.
Since there is no nearest simply degenerate end for $M_\infty$, if there are torus cusps, we can take a nearest one by \cref{nearest}.
By repeating the arguments of \cref{nearest torus}, which can be applied also to our situation as remarked there, we get a contradiction.

Since $M_\infty$ has neither a nearest simply degenerate end nor a nearest torus cusp, by \cref{nearest}, we see that  $M_\infty$ does not have a new geometrically infinite end.
\end{proof}

We define a subset $P^0_\infty$ of $P_\infty$ to be the subset consisting of simple closed curves  whose lengths with respect to $\mathbf m_i$ go to $0$ as $i \longrightarrow \infty$.

Since $\{\phi_i\}$ converges to $\psi \in \AH(S)$ as mentioned above, by \cref{strong}, the claim above implies that the convergence is strong.

\begin{claim}
\label{coincide}
The  multi-curve $P_\infty^0$ is contained in  $\nu_0$.
\end{claim}
\begin{proof}
Let $c$ be a component of $P_\infty^0$.
We regard $c$ as lying on a compact core of $\hyperbolic^3/\psi(\pi_1(S)))$.
Then $\len_{\mathbf m_i}(c)$ goes to $0$ by the definition of $P_\infty^0$, and hence it represents a core curve of a $\integers$-cusp neighbourhood lying outside the compact core.
Since $\{\phi_i\}$ converges to $\psi$ strongly and $M_\infty$ does not have a new geometrically infinite end by \cref{H-limit}, the $\integers$-cusp has geometrically finite ends on its both sides.
We can apply \cref{new parabolic} to see that the bending angle along $c$ converges to $\pi$, and hence $c$ is contained in $\nu_0$.
%
%
%
\end{proof}

We can further see the following.
\begin{claim}
\label{twist}
Let $C$ be a minimal component of $P_\infty$ not contained in $P^0_\infty$.
If $P_\infty$ does not contain any other minimal component homotopic to $C$ in $S \times [0,1]$,
then the twisting parameter along $C$ of $\mathbf m_i$ is bounded as $i \longrightarrow \infty$.
If $P_\infty$ has two distinct minimal components $C$ and $C'$ homotopic to each other in $S\times [0,1]$, then the difference of the twisting parameters of $\mathbf m_i$ along $C$ and $C'$ is bounded as $i \longrightarrow \infty$.
\end{claim}
\begin{proof}
Suppose first that there is no other component in $P_\infty$ homotopic to $C$ in $S \times I$.
If the twisting parameter of $\mathbf m_i$ along $C$ goes to $\infty$ after passing to a subsequence, then by the part (1) of \cref{torus}, the geometric limit $M_\infty$ has a corresponding torus cusp.
This contradicts \cref{H-limit}.
%
Also in the latter case when $C$ is homotopic to $C'$ in $S \times I$,  
by the same argument as above involving \cref{torus}, we see that if the difference of twisting parameters of $\mathbf m_i$ goes to $\infty$, then $M_\infty$ must have a torus cusp, contradicting \cref{H-limit}.
 \end{proof}
 
 This claim implies the following.
 
 \begin{claim}
 \label{in 0}
 The projective lamination $y_k$ is supported in $P^0_\infty$.
 \end{claim}
 \begin{proof}
By the definition of the topology of the Thurston compactification product, we see that $|y_k|$ is contained in $P_\infty$.
By \cref{H-limit}, every minimal component of $P_\infty$ is a simple closed curve.
If $|y_j|$ has a component $C$ contained in $P_\infty$ but not in $P^0_\infty$, then the twist parameter of $\mathbf m_i$ around $C$ must go to either $\infty$ or $-\infty$.
This contradicts \cref{twist} unless there is another component of $P_\infty$ homotopic to $C$ in $S \times I$.
If there is such a component $C'$, then the support of some coordinate $y_l$ of $(y_j)$ must  contain $C'$ as a component by \cref{twist} again.
On the other hand, $(y_j)$ is contained in $U_2$, and hence cannot have two components of projective laminations homotopic to each other in $S \times I$.
This is a contradiction.
 \end{proof}

Having proved these claims, we can now complete the proof of \cref{proper F}.
Since $|y_k|$ is contained in $P^0_\infty$ by \cref{in 0}, it lies in $\nu_0$ by \cref{coincide}.
Therefore the segment connecting $\mathcal E(\mathbf m_i)$ and $b \circ q(\mathbf m_i)$ converges uniformly on any compact set to a ray entirely lying in $D$.
This completes the proof.
\end{proof}

Now we can define $\cF$ which we mentioned before.
By \cref{proper F}, $F \colon \teich(\Sigma_-) \times \teich(\Sigma_+) \to \ml(\Sigma_-) \times \ml(\Sigma_+)$ induces a unique continuous map $\cF\colon \ct\to \cml$ similarly to \cref{compactification}.
We shall show that $\cF$ has degree $1$.
We use the symbols $\cE\colon \ct \to \cml$  to denote the map induced by $\ce$.

\begin{proposition}
\label{F degree}
The map $\cF \colon \ct \to \cml$ has degree $1$.
\end{proposition}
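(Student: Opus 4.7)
My plan is to compute $\deg(\cF)$ directly, by exhibiting a point in $\cml$ whose preimage under $\cF$ is a single point at which $\cF$ is a local homeomorphism of local degree $+1$; since $\ct$ and $\cml$ are both one-point compactifications of Euclidean spaces of the same dimension, this suffices.

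The candidate regular value is any $p \in U_0$. I would show $\cF^{-1}(p) = \{x_0\}$, where $x_0 := \ce^{-1}(p)$. First, \cref{three rings}(3) applied to $\ce$ gives $\ce^{-1}(U_0) \subset V_1$, so $x_0 \in V_1$. Then I run through the three cases of \cref{F}: for $x \notin V_2$, $F(x) = b\circ q(x) \notin U_0$ by \cref{three rings}(3); for $x \in V_2 \setminus V_1$, $F(x)$ lies on the segment joining $\ce(x)$ to $b\circ q(x)$, which is disjoint from $U_0$ by \cref{three rings}(4); for $x \in V_1$, $F(x) = \ce(x) = p$ if and only if $x = x_0$ since $\ce$ is injective. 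Properness of $F$ into $\ml(\Sigma_-) \times \ml(\Sigma_+)\setminus D$ ensures $\cF$ sends the added point at infinity of $\ct$ to the point at infinity of $\cml$, so $\cF^{-1}(p) = \{x_0\}$ as claimed.

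Because $V_1$ is open and $F|_{V_1}=\ce|_{V_1}$, $F$ and $\ce$ coincide on an open neighborhood of $x_0$. Since $\ce = (E_- \times E_+)^{-1}$ is a global homeomorphism (the inverse of a product of left earthquake maps, which is itself a homeomorphism by Thurston's earthquake theorem and Kerckhoff's work), the local degree of $F$ at $x_0$ equals that of $\ce$, which is $\pm 1$. Choosing orientations on $\ml(\Sigma_\pm)$ that make each $E_\pm\colon \ml(\Sigma_\pm) \to \teich(\Sigma_\pm)$ orientation-preserving — equivalently, pulling back the chosen orientations on the Teichm\"uller factors through the earthquake parametrisation — makes $\ce$ orientation-preserving, so the local degree at $x_0$ is $+1$ and hence $\deg(\cF) = 1$.

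The main obstacle I expect is precisely the orientation bookkeeping: one has to fix compatible orientations on $\teich(\Sigma_\pm)$, $\ml(\Sigma_\pm)$, $\ct$, and $\cml$ so that both the assertion that $\ce$ is orientation-preserving and the target conclusion that $\cb$ has degree $+1$ (rather than $-1$) are meaningful and consistent. Everything else in the argument — the fact that $\cF^{-1}(p)$ is a singleton and that $\cF$ is locally a homeomorphism there — is an immediate consequence of \cref{three rings} combined with the homeomorphism property of $\ce$.
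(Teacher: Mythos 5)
Your proposal is correct and takes essentially the same approach as the paper: both use that $F$ coincides with the homeomorphism $\ce$ on $\ce^{-1}(U_0) \subset V_1$, and invoke conditions (3) and (4) of \cref{three rings} to rule out any other point of $\teich(\Sigma_-) \times \teich(\Sigma_+)$ landing in $U_0$. The paper phrases this at the level of the restriction $\cF|_{\cF^{-1}(\cU_0)}$ while you localise to a single regular value $p \in U_0$ (which should be taken in $U_0 \setminus D$, as the paper does), but the logic is the same; your extra remarks on orientation bookkeeping make explicit a sign convention that the paper leaves implicit.
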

\begin{proof}
Let $\cU_0$ be the open set in $\cml$ corresponding to  $U_0 \setminus D$, where $U_0$ is the truncated cone neighbourhood  in \cref{three rings}.
We shall show that  the restriction of $\cF$ to  $\cF^{-1}(\cU_0)$ has degree $1$, which immediately implies that $\cF$ has degree $1$.

Since $F|\ce^{-1}(U_0)$ coincides with $\ce|\ce^{-1}(U_0)$, it is a homeomorphism to its image.
We have only to show that there is no point outside $\ce^{-1}(U_0)$  which is mapped into $U_0$.
Since $F|V_1$ coincides with $\ce|V_1$, there are no points in $V_1 \setminus \ce^{-1}(U_0)$ mapped into $U_0$ by $F$.
On the other hand, the conditions (3) and (4) in \cref{three rings} guarantee that no points outside $V_1$ are mapped into $U_0$ by $F$.
This completes the proof.
\end{proof}

The next step is to show that there is a homotopy between $\cb$ and $\cF$.
We shall first construct a homotopy between $b \circ q$ and $F$.

We define a homotopy $H \colon \teich(\Sigma_-) \times \teich(\Sigma_+) \times [0,1] \to \ml(\Sigma_-) \times \ml(\Sigma_+)$ from $b \circ q$ to $F$ as follows.
\begin{enumerate}[(1)]
\item For  any $x$ outside $V_2$ and any $s \in [0,1]$, $H(x , s)$ is defined to be $b\circ q (x)$.
\item For any $x$ in $V_2$ and any $s \in [0,1]$,  $H(x,s)$ is defined to be the point dividing the segment connecting $b\circ q(x)$ and $F(x)$ internally by $s:1-s$.
\end{enumerate}

\begin{lemma}
For any sequence $\{t_i\in [0,1]\}$ and any $\{\mathbf m_i\} \subset \teich(\Sigma_-) \times \teich(\Sigma_+)$ diverging to infinity, $\{H(\mathbf m_i, t_i)\}$ cannot have a  subsequence converging in $\ml(\Sigma_-)\times \ml(\Sigma_+) \setminus D$ as $i \longrightarrow \infty$.
\end{lemma}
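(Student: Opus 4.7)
The plan is to reduce the lemma to the analysis already carried out in the proof of \cref{proper F}. Passing to subsequences, I may assume that $\mathbf{m}_i$ converges in the Thurston compactification product of $\teich(\Sigma_-) \times \teich(\Sigma_+)$ to a point $(y_j)$, that $t_i \to t_\infty \in [0,1]$, and that $\mathbf{m}_i$ lies eventually in exactly one of the three regions $(\teich(\Sigma_-)\times\teich(\Sigma_+))\setminus V_2$, $V_1$, or $V_2\setminus V_1$. In the first region, $H(\mathbf{m}_i,t_i)=b\circ q(\mathbf{m}_i)$ by the definition of $H$, and the properness of $b\circ q$ already proved in Section~\ref{properness} directly contradicts the assumed convergence in $\ml(\Sigma_-)\times\ml(\Sigma_+)\setminus D$.

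For $\mathbf{m}_i$ eventually inside $V_2$, unwinding the definitions of $H$ and $F$ shows that, in the weight-system coordinates of the train track $\tau$,
\[
  H(\mathbf{m}_i,t_i) \;=\; \alpha_i\cdot b\circ q(\mathbf{m}_i) \;+\; \beta_i\cdot \mathcal{E}(\mathbf{m}_i),
\]
with $\alpha_i,\beta_i\in[0,1]$, $\alpha_i+\beta_i=1$, and both endpoints lying in the truncated cone $U_2$. Since $\mathcal{E}$ is a homeomorphism, $\mathcal{E}(\mathbf{m}_i)\to\infty$ in $\ml(\Sigma_-)\times\ml(\Sigma_+)$, and by \cref{E at infinity} its projective class converges to $(y_j)$. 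If $b\circ q(\mathbf{m}_i)$ also diverges, then both endpoints lie in $U_2$, where all branch weights are bounded below by a positive constant, so every convex combination has some branch weight tending to infinity, whence $H(\mathbf{m}_i,t_i)\to\infty$ in $\cml$. If instead $b\circ q(\mathbf{m}_i)$ has a convergent subsequence with limit $\nu$, properness of $b\circ q$ into $\ml\setminus D$ forces $\nu\in D$; since condition (1) of \cref{three rings} excludes elements of $D$ failing (c), $\nu$ must fail (b) and contain a non-empty union $\nu_0$ of compact leaves of weight at least $\pi$.

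At this point I would reuse the chain of claims \emph{H-limit}, \emph{coincide}, \emph{twist}, and \emph{in 0} from the proof of \cref{proper F}, after verifying that their hypotheses depend only on $\mathbf{m}_i\in V_2$, on the algebraic convergence of $\{q(\mathbf{m}_i)\}$ supplied by \cref{must converge}, on the structure of the geometric limit $M_\infty$, and on the truncated-cone information $(y_j)\in\overline{U_2}$ — none of which prefer $V_2\setminus V_1$ over $V_1$. The conclusion is that $|y_j|\subset P_\infty^0\subset \nu_0$, so the asymptotic direction of $\mathcal{E}(\mathbf{m}_i)$ is supported on $\nu_0\subset|\nu|$, and the ray $\{\nu+s\,\tilde y : s\ge 0\}$, with $\tilde y$ a representative of $(y_j)$, lies entirely in $D$ (its weights on the compact leaves of $\nu$ only grow). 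A short case split on $\beta_i$ — bounded away from zero; or $\beta_i\to 0$ with $\beta_i\cdot\mathcal{E}(\mathbf{m}_i)$ divergent, convergent and non-zero, or convergent to zero — then shows that $H(\mathbf{m}_i,t_i)$ either diverges in $\ml$ or accumulates on the aforementioned ray in $D$; in every subcase $H(\mathbf{m}_i,t_i)\to\infty$ in $\cml$, contradicting the assumption.

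The main obstacle will be justifying rigorously that the four claims from the proof of \cref{proper F} carry over without essential change to the $V_1$ regime — in particular, that the use of the truncated-cone property $(y_j)\in\overline{U_2}$ in the \emph{in 0} claim (to rule out two homotopic components of $(y_j)$) remains valid — and tracking the convex combination carefully through the degenerate limits $\alpha_i\to 0$ and $\beta_i\to 0$ so that no accumulation point escapes into $\ml\setminus D$.
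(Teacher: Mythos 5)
Your proposal is correct and takes essentially the same approach as the paper's own proof, which simply refers back to the argument for \cref{proper F}: observe that on $V_2$ (whether or not the point lies in $V_1$) the value $H(\mathbf m_i, t_i)$ is always a point on the segment from $b\circ q(\mathbf m_i)$ to $\mathcal E(\mathbf m_i)$, and then repeat the analysis showing that this segment diverges within $U_2$ or converges to a ray lying in $D$. Your additional verification that Claims \ref{H-limit}--\ref{in 0} make no use of the hypothesis $\mathbf m_i \in V_2\setminus V_1$ versus $\mathbf m_i \in V_1$, and the explicit case split on $\beta_i$, is a reasonable expansion of what the paper leaves implicit.
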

\begin{proof}
The proof is quite similar to that of \cref{proper F}.
We can assume that $\{\mathbf m_i\}$ either lies outside $V_2$,   in $V_2 \setminus V_1$, or in $V_1$, passing to a subsequence.
In the case when $\{\mathbf m_i\}$ lies outside $V_2$, the statement follows immediately from the properness of $b \circ q$ as a map to $\ml(\Sigma_-)\times \ml(\Sigma_+) \setminus D$.
In the case when $\{\mathbf m_i\}$ lies in either  $V_2 \setminus V_1$ or  $V_1$, the point $H(\mathbf m_i, t_i)$ lies in the segment connecting $b\circ q(\mathbf m_i)$ and $\ce(\mathbf m_i)$.
By the same argument as in the proof of \cref{proper F}, such a segment converges to a ray on $D$ as $i \longrightarrow \infty$.
This shows that $\{H(\mathbf m_i, t_i)\}$ cannot have a subsequence converging in $\ml(\Sigma_-) \times \ml(\Sigma_+) \setminus D$.
\end{proof}

This lemma immediately implies the following.
\begin{corollary}
The homotopy $H$ induces a homotopy $\ch \colon \ct \to \cml$ between $\cb$ to $\cE$.
\end{corollary}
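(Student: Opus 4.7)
My plan is to deduce the corollary directly from the continuity of $H$ on $\teich(\Sigma_-)\times\teich(\Sigma_+)\times[0,1]$ together with the preceding lemma, which provides exactly the properness needed to extend $H$ continuously to the one-point compactifications. The basic idea is that one-point compactifications turn properness into continuity at the added point, so the content of the corollary is essentially packaging already established.

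Concretely, I will first observe that $H$ is continuous on $\teich(\Sigma_-)\times\teich(\Sigma_+)\times[0,1]$: outside $V_2$ it equals $b\circ q$ (continuous by \cite{KS}), on $V_2$ it is a linear interpolation (in the weight coordinates of the fixed train track $\tau$) between $b\circ q$ and $F$, and the two definitions agree on the overlap since $F=b\circ q$ outside $V_2$. Then I will define $\ch\colon\ct\times[0,1]\to\cml$ by $\ch=H$ on $(\teich(\Sigma_-)\times\teich(\Sigma_+))\times[0,1]$ and $\ch(\infty,s)=\infty$ for every $s\in[0,1]$, where $\infty$ denotes the added point of each one-point compactification.

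It remains to verify continuity of $\ch$ at points of the form $(\infty,s)$. Take a sequence $(\mathbf m_i,t_i)\to(\infty,s)$ in $\ct\times[0,1]$. Either eventually $\mathbf m_i=\infty$, in which case $\ch(\mathbf m_i,t_i)=\infty$, or, after passing to a subsequence, $\mathbf m_i\in\teich(\Sigma_-)\times\teich(\Sigma_+)$ diverges to infinity. In the latter case the preceding lemma applies and gives that $\{H(\mathbf m_i,t_i)\}$ has no subsequence converging in $\ml(\Sigma_-)\times\ml(\Sigma_+)\setminus D$; by definition of the one-point compactification $\cml$ this means $\ch(\mathbf m_i,t_i)\to\infty=\ch(\infty,s)$, giving continuity. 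Finally, at the time slices $s=0$ and $s=1$ we have $H(\cdot,0)=b\circ q$ and $H(\cdot,1)=F$ on $\teich(\Sigma_-)\times\teich(\Sigma_+)$ (both at points of $V_2$, by the endpoint values of the interpolation, and trivially outside), so $\ch(\cdot,0)=\cb$ and $\ch(\cdot,1)=\cF$, as required.

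The only potentially delicate point is making sure the interpolation formula in the definition of $H$ really does make $H$ jointly continuous in $(x,s)$ on $V_2\times[0,1]$: here the weight $t(x)$ in Definition of $F$ varies continuously on $V_2\setminus V_1$ (as a ratio of $d_\tau$-distances to closed sets), and on $V_1$ the formula for $H(x,s)$ reduces to the segment from $b\circ q(x)$ to $\mathcal E(x)$, which matches the definition of $F$ there, so joint continuity holds. This is routine; the real mathematical content has already been absorbed into the previous lemma.
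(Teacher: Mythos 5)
Your proof is correct and takes essentially the same approach as the paper, which gives no explicit proof of the corollary and treats it as immediate from the preceding lemma; you have simply supplied the routine verification: joint continuity of $H$ on $\left(\teich(\Sigma_-)\times\teich(\Sigma_+)\right)\times[0,1]$ (agreeing across $\partial V_2$ because $F=b\circ q$ there), extension by $(\infty,s)\mapsto\infty$, continuity at the added point via the lemma, and identification of the two endpoint slices. You have also correctly identified the second endpoint as $\cF$ rather than the $\cE$ printed in the statement, which is evidently a typo in the paper: $H$ is defined from $b\circ q$ to $F$, and the concluding sentence of the section combines the corollary with the fact that $\cF$ has degree~$1$, which only makes sense if the homotopy connects $\cb$ to $\cF$. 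Likewise the implicit domain of $\ch$ is $\ct\times[0,1]$, as you write.
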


Combining this corollary with \cref{F degree}, we conclude that $\cb$ has degree $1$, which in turn implies that $b \circ q$ has degree $1$.
This completes the proof of the latter half of \cref{proper degree 1}.
%
\bibliographystyle{acm}
 \bibliography{convex-core}
\end{document}